\numberwithin{equation}{section}
\newtheorem{thm}{Theorem}[section]
\newtheorem{proposition}[thm]{Proposition}
\newtheorem{cor}[thm]{Corollary}
\newtheorem{lem}[thm]{Lemma}
\theoremstyle{definition}
\theoremstyle{remark}
\newtheorem*{xrem}{Remark}
\def\beq{\begin{equation}}
\def\eeq{\end{equation}}
\newcommand{\bea}{\begin{eqnarray}}
\newcommand{\eea}{\end{eqnarray}}
\newcommand{\beas}{\begin{eqnarray*}}
\newcommand{\eeas}{\end{eqnarray*}}
\def\<{\langle}
\def\>{\rangle}
\DeclareMathOperator{\CAP}{\it CAP}
\DeclareMathOperator{\curl}{curl}
\DeclareMathOperator{\dist}{dist}
\DeclareMathOperator{\dive}{div}
\DeclareMathOperator{\Dom}{Dom}
\DeclareMathOperator{\modu}{mod}
\DeclareMathOperator{\Tr}{Tr}
\DeclareMathOperator{\WAP}{\it WAP}
\DeclareMathOperator{\Ker}{Ker}
\DeclareMathOperator{\osc}{osc}
\DeclareMathOperator{\re}{Re}
\DeclareMathOperator{\im}{Im}
\DeclareMathOperator{\Trig}{Trig}
\DeclareMathOperator{\supp}{supp}
\newcommand{\one}{\mathds{1}}
\newcommand{\bel}{\begin{equation} \label}
\newcommand{\ee}{\end{equation}}
\newcommand{\eps}{{\epsilon}}
\newcommand{\veps}{{\varepsilon}}
\newcommand{\rd}{{\mathbb R}^{2}}
\newcommand{\bd}{{\mathbb B}^{2}}
\def\B{{\mathbb B}}
\def\C{{\mathbb C}}
\def\E{{\mathbb E}}
\def\N{{\mathbb N}}
\newcommand{\PPP}{{\mathbb P}}
\def\R{{\mathbb R}}
\newcommand{\SSS}{{\mathbb S}}
\def\T{{\mathbb T}}
\def\Z{{\mathbb Z}}
\def\CA{\mathcal {A}}
\def\CB{\mathcal {B}}
\def\CF{\mathcal {F}}
\def\CH{\mathcal {H}}
\def\CI{\mathcal {I}}
\def\CM{\mathcal {M}}
\def\CO{\mathcal {O}}
\def\CT{\mathcal {T}}
\def\CU{\mathcal {U}}
\def\CV{\mathcal {V}}
\def\SA{{\mathscr A}}
\def\SD{{\mathscr D}}
\def\SH{{\mathscr H}}
\def\SC{{\mathscr C}}
\begin{document}
\title[Almost periodic Pauli operators]
{Spectral properties of 2D Pauli operators with almost periodic electromagnetic fields}

\author[J.-F. Bony]{Jean-Fran\c{c}ois Bony}
\address{Institut de Math\'ematiques de Bordeaux, UMR 5251 du CNRS,
Universit\'e de Bordeaux, 351 cours de la Lib\'eration, 33405 Talence cedex, France}
\email{bony@math.u-bordeaux.fr}
\author[N.~Espinoza]{Nicol\'as Espinoza}
\address{Graduate School of Mathematical Sciences, University of Tokyo
3-8-1, Komaba, Meguro-ku, Tokyo 153-8914, Japan}
\email{nespino@ms.u-tokyo.ac.jp}
\author[G. Raikov]{Georgi Raikov}
\address{Departamento de Matem\'aticas, Facultad de Matem\'aticas, Pontificia Universidad
Cat\'olica de Chile, Vicu\~na Mackenna 4860, Santiago de Chile, Chile}
\email{graikov@mat.uc.cl}

\begin{abstract}
We consider a 2D Pauli operator with almost periodic field $b$ and electric potential $V$. First, we study the ergodic properties of $H$ and show, in particular, that its discrete spectrum is empty if there exists an almost periodic magnetic potential which generates the magnetic field $b - b_{0}$, $b_{0}$ being the mean value of $b$. Next, we assume that $V = 0$, and investigate the zero modes of $H$. As expected, if $b_{0} \neq 0$, then generically $\dim \Ker H = \infty$. If $b_{0} = 0$, then for each $m \in \N \cup \{ \infty \}$, we construct almost periodic $b$ such that $\dim \Ker H = m$. This construction depends strongly on results concerning the asymptotic behavior of Dirichlet series, also obtained in the present article.
\end{abstract}

\maketitle

{\bf Keywords}: Pauli operators, almost periodic functions, ergodic operator families, zero modes, asymptotics of Dirichlet series.\\

{\bf  2010 AMS Mathematics Subject Classification}:  35P05, 81Q05, 47N50, 58G10, 11F66.\\

\section{Introduction}

In the present article we study the spectral properties of the 2D Pauli operator $H$ with scalar magnetic field $b$ and electric potential $V$.

First, we assume that $b$ and $V$ are almost periodic and there exists an almost periodic magnetic potential $\widetilde{A}$ which generates $\widetilde{b} : = b - b_{0}$, $b_{0}$ being the mean value of $b$, and construct an ergodic family of operators $\{ \CH_{\omega} \}_{\omega \in \B^{2}}$ such that $\CH_{0} = H$. Here $\B^{2}$ is the Bohr compactification of $\R^{2}$, equipped with the normalized Haar measure $\PPP$. Using the general properties of ergodic families of operators, and the uniform continuity  of the resolvent $( \CH_{\omega} - z )^{- 1}$, $z \in \C \setminus \R$, with respect to  $\omega \in \B^{2}$, we show that for every $\omega \in \B^{2}$ the spectrum $\sigma ( \CH_{\omega} )$ of $\CH_{\omega}$ is the same and the discrete spectrum $\sigma_{\rm disc} ( \CH_{\omega} )$ is empty, while the absolutely continuous spectrum $\sigma_{\rm ac} ( \CH_{\omega} )$, the singular continuous spectrum $\sigma_{\rm sc} ( \CH_{\omega} )$, and the closure $\sigma_{\rm pp} ( \CH_{\omega} )$ of the set of the eigenvalues of $\CH_{\omega}$, are almost surely constant. Moreover, we prove that almost surely any fixed $E \in \R$ is not an eigenvalue of $\CH_{\omega}$ of finite multiplicity. Next, we assume only that $b$ and $V$ are almost periodic without supposing the existence of an almost periodic $\widetilde{A}$ which generates $\widetilde{b}$, and extend to this case the above results which now all hold almost surely.

Further, we investigate the kernel of the operator $H$ with $V = 0$. We concentrate on the problem of determining $\dim \Ker H$ for a given magnetic field $b$. If $\Ker H$ is not trivial, we also address the issue of whether the zero is an isolated point of $\sigma ( H )$. To start with, we recall the classical results concerning rapidly decaying or periodic $b$, and then we pass to almost periodic fields. First, we consider a class of such fields which was studied already in \cite{r5}, and, in a certain sense, is close to the class of periodic $b$. We recall that for this class we have $\dim \Ker H = \infty$ if $b_{0} \neq 0$,  the zero is an isolated point of $\sigma ( H )$, and an effective bound of the size of the spectral gap adjoining the origin is available, while if $b_{0} = 0$, then $\Ker H = \{ 0 \}$. Further, we consider almost periodic magnetic fields which are distant from the periodic ones. In this case again $\dim \Ker H = \infty$ if $b_{0} \neq 0$. However, if $b_{0} = 0$ the situation changes drastically in comparison with the previous class. Namely, for each $m \in \N \cup \{ \infty \}$ we construct explicitly magnetic fields for which $\dim \Ker H = m$. If $m \in \N$, then, due to the ergodic properties of $H$, the zero is not an isolated point of $\sigma ( H )$. Our construction strongly relies on some new results concerning the asymptotic behavior of certain Dirichlet series containing on a large parameter, which are also obtained in the present article; these results could be of independent interest, say, in the analytic number theory.

The article is organized as follows. In Section \ref{s2} we introduce the 2D Pauli operator $H$ and describe some of its general properties such as its supersymmetric form in the case $V = 0$, as well as its gauge invariance. In Section  \ref{s3}, we discuss the ergodic properties of $H$. Finally, in Section \ref{s4} we assume $V = 0$, and investigate the zero modes of $H$. Finally, the Appendix contains the proofs of our results on the asymptotics of Dirichlet series.

\section{Two-dimensional Pauli operators: general setting} \label{s2}

Let $b : \R^{2} \to \R$ be a bounded continuous function which has the physical interpretation of a scalar magnetic field, and let $A = ( A_{1} , A_{2} ) \in C^{1} ( \R^{2} ; \R^{2})$ be a vector field such that
\begin{equation*}
b = \curl A : = \frac{\partial A_{2}}{\partial x_{1}} - \frac{\partial A_{1}}{\partial x_{2}} .
\end{equation*}
Then, $A$ is interpreted as a magnetic potential which generates the magnetic field $b$. Let $M_{2}$ be the set of Hermitian $2 \times 2$ matrices, and let $V: \R^{2} \to M_{2}$ be a bounded continuous function, interpreted as a matrix-valued electric potential. Then the 2D Pauli operator $H$ with magnetic potential $A$ and electric potential $V$, acting in the Hilbert space $L^{2} ( \R^{2} ; \C^{2} )$,  can be defined as
\begin{equation*}
H = H ( A , V ) = \left(
\begin{array}{cc}
H^{-} & 0 \\
0 & H^{+}
\end{array}
\right) + V ,
\end{equation*}
where $H^{\pm} : = h \pm b$ and $h = h( A ) : = ( - i \nabla - A )^{2}$ is the 2D Schr\"odinger operator with magnetic potential $A$. Let us recall that for $A \in L^{2}_{\rm loc} ( \R^{2} ; \R^{2} )$, the operator $h$ can be defined as the self-adjoint operator generated by the closure of the quadratic form
\begin{equation*}
\int_{\R^{2}} \vert i \nabla u + A u \vert^{2} d x , \quad u \in C_{0}^{\infty} ( \R^{2} ) .
\end{equation*}
If
\begin{equation} \label{j1}
A \in L^{4}_{\rm loc} ( \R^{2} ; \R^{2} ) , \qquad \dive A \in L^{2}_{\rm loc} ( \R^{2} ) ,
\end{equation}
then $h$ is essentially self-adjoint on $C^{\infty}_{0} ( \R^{2} )$ (see \cite{ls}). Note that $A = ( A_{1} , A_{2} ) \in C^{1} ( \R^{2} ; \R^{2} )$ implies \eqref{j1}. Thus, the block operators $H^{\pm}$ with common domain $\Dom H^{\pm} = \Dom h$ are self-adjoint in $L^{2} ( \R^{2} )$ and the matrix operator $H ( A , V )$ is self-adjoint in $L^{2} ( \R^{2} ; \C^{2} )$.
Let us introduce the magnetic creation operator
\begin{equation*}
a^{*} = a ( A )^{*} : = - 2 i \frac{\partial}{\partial z} - A_{1} + i A_{2}, \quad z = x_{1} + i x_{2} ,
\end{equation*}
and the magnetic annihilation operator
\begin{equation} \label{au3}
a = a ( A ) : = - 2 i \frac{\partial}{\partial \overline{z}} - A_{1} - i A_{2} , \quad \overline{z} = x_{1} - i x_{2} .
\end{equation}
The operators $a$ and $a^{*}$ with common domain $\Dom h^{1 / 2}$ are closed and mutually adjoint in $L^{2} ( \R^{2} )$. Then the operator $H ( A , 0 )$ can be written in the supersymmetric form
\begin{equation} \label{j2}
H ( A , 0 ) = \left(
\begin{array}{cc}
0 & a^{*}\\
a & 0
\end{array}
\right)^{2} = \left(
\begin{array}{cc}
a^{*}a & 0\\
0 & a a^{*}
\end{array}
\right) ,
\end{equation}
so that $H^{-} = a^{*} a$ and $H^{+} = a a^{*}$. Let now $\varphi \in C^{2} ( \R^{2}; \R )$ be a solution of the Poisson equation
\begin{equation} \label{1}
\Delta \varphi ( x ) = b ( x ) , \quad x \in \R^{2} .
\end{equation}
Then $A : = \big( - \frac{\partial \varphi}{\partial x_{2}} , \frac{\partial \varphi}{\partial x_{1}} \big)$ generates the magnetic field $b$, and moreover $\dive A = 0$. In this case we have
\begin{equation} \label{j3}
a^{*} = - 2 i e^{\varphi} \frac{\partial}{\partial z} e^{- \varphi} , \qquad a = - 2 i e^{-\varphi} \frac{\partial}{\partial \overline{z}} e^{\varphi} .
\end{equation}
Next, assume that the magnetic potentials $A^{( j )} \in C^{1} ( \R^{2} ; \R^{2})$, $j = 1 , 2$, generate the same magnetic field, i.e. $\curl A^{( 1 )} = \curl A^{( 2 )}$. Then there exists a function $\Phi \in C^{2} ( \R^{2} ; \R )$ such that $A^{( 1 )} = A^{( 2 )} + \nabla \Phi$. Therefore, $h ( A^{( 1 )} ) = e^{i \Phi} h ( A^{( 2 )} ) e^{- i \Phi}$ and, hence,
\begin{equation*}
H ( A^{( 1 )} , V ) = e^{i \Phi} H ( A^{(2)} , V ) e^{- i \Phi} ,
\end{equation*}
i.e. the operators $H ( A^{( 1 )} )$ and $H ( A^{( 2 )})$ are unitarily equivalent under the gauge transformation $u \mapsto e^{- i \Phi} u$. In particular, $H ( A^{( 1 )} )$ and $H ( A^{( 2 )})$ have identical spectral properties. The definition of the Pauli operator in arbitrary dimension $d \geq 2$, and the description of some of its basic spectral properties can be found, for example, in \cite{shi}.

\section{Ergodic properties of $H$} \label{s3}

In this section we consider the ergodic properties of the operator $H$ with almost periodic magnetic field $b$. We start with a brief summary of the definition of almost periodic functions and their basic properties following mainly \cite{shu2}. Since this part is independent of the dimension $d$, we let $d \geq 1$.

\subsection{Almost periodic functions}

Let $C_{\rm b} ( \R^{d} )$ be the non separable Banach space of bounded functions $f \in C ( \R^{d} )$ with norm
\begin{equation*}
\Vert f \Vert_{C_{\rm b} ( \R^{d} )} : = \sup_{x \in \R^{d}} \vert f ( x ) \vert .
\end{equation*}
Set
\begin{equation*}
e_{\lambda} ( x ) : = e^{i \lambda \cdot x} , \quad \lambda \in \R^{d} , \quad x \in\R^{d} .
\end{equation*}
Thus, $\{ e_{\lambda} \}_{\lambda \in \R^{d}}$ is the set of the continuous characters of the Abelian group $\R^{d}$. Put
\begin{equation*}
\Trig ( \R^{d} ) : = \bigg\{u = \sum_{j = 1}^{N} c_{j} e_{\lambda_{j}} \, | \, \ c_{j} \in \C , \ \lambda_{j} \in \R^{d} \text{ for } j = 1 , \ldots , N < \infty \bigg\} .
\end{equation*}
Then the Banach space of continuous almost periodic functions $\CAP( \R^{d} )$ is the closure of $\Trig ( \R^{d} )$ in $C_{\rm b}( \R^{d} )$. It is well known that if $f \in C_{\rm b} ( \R^{d} )$, then $f \in \CAP ( \R^{d} )$ if and only if the set $\{ f ( \cdot + s ) \}_{s \in \R^{d}}$ is precompact in $C_{\rm b} ( \R^{d} )$.

Let $f \in \CAP ( \R^{d} )$. Denote by
\begin{equation*}
\CM ( f ) : = \lim_{T \to \infty} T^{- d} \int_{( - T / 2 , T / 2)^{d}} f ( x ) \, d x \in \C ,
\end{equation*}
the mean value of $f$. For $\lambda \in \R^{d}$ denote by $f_{\lambda}$ {\em the Fourier coefficient}
\begin{equation*}
f_{\lambda} : = \CM ( f e_{- \lambda} ) ,
\end{equation*}
so that $f_{0} = \CM ( f )$, and put
\begin{equation*}
J ( f ) : = \big\{ \lambda \in \R^{d} \, | \, \ f_{\lambda} \neq 0 \big\} , \qquad J_{0} ( f ) : = J ( f ) \setminus \{ 0 \} .
\end{equation*}
It is well known that for any given $f \in \CAP ( \R^{d} )$, the set $J ( f )$ is countable, and $f$ is uniquely determined by the set $\{ f_{\lambda} \}_{\lambda \in \R^{d}}$. Let us note here the elementary fact that $f \in \CAP ( \R^{d} )$ is real valued if and only if $f_{- \lambda} = \overline{f_{\lambda}}$, $\lambda \in \R^{d}$.

We will need also the Wiener class of almost periodic functions
\begin{equation*}
\WAP ( \R^{d} ) : = \bigg\{ f \in \CAP ( \R^{d} ) \, | \,  \ \sum_{\lambda \in J ( f )} \vert f_{\lambda} \vert < \infty \bigg\} .
\end{equation*}
If $f \in \WAP ( \R^{d} )$, then $f$ coincides with the sum of its Fourier series
$\sum_{\lambda \in J ( f )} f_{\lambda} e_{\lambda} ( x )$,
which is absolutely convergent, uniformly with respect to $x \in \R^{d}$. Note also that if $f \in \WAP ( \R^{d} )$ and the set $J ( f )$ is bounded, then
\begin{equation*}
f \in \CAP^{\infty} ( \R^{d} ) : = \big\{ u \in C^{\infty} ( \R^{d} ) \, | \, \ D^{\alpha} u \in \CAP ( \R^{d} ) \text{ for } \alpha \in \Z_{+}^{d} \big\} .
\end{equation*}
Let $\B^{d}$ be the Bohr compactification of $\R^{d}$ (see e.g. \cite[Section 1]{shu2}). We recall that $\B^{d}$ is a compact Abelian group which is not metrizable and hence not first countable (see e.g. \cite[Remark 1.7 (b) and Theorem 1.3 (a)]{ctaw}). Further, there exists a continuous homomorphism $\iota : \R^{d} \to \B^{d}$ such that $\iota ( \R^{d} )$ is dense in $\B^{d}$. As in $\R^{d}$, we denote by ``$+$'' the group operation in $\B^{d}$. Note that $\iota$ induces an isomorphism between $\CAP ( \R^{d} )$ and $C ( \B^{d} )$. In particular, for each $f \in \CAP ( \R^{d} )$ there exists a unique $\phi \in C ( \B^{d} )$ such that
\begin{equation*}
f ( x ) = \phi ( \iota ( x ) ) , \quad x \in \R^{d} ;
\end{equation*}
we call $\phi$ {\em the canonic extension} of $f$. Let $\PPP$ be the Haar measure on $\B^{d}$, normalized to one, and $\CF$ be the $\sigma$-algebra of the $\PPP$-measurable subsets of $\B^{d}$. Then $( \B^{d} , \CF, \PPP )$ is a probability space. If $f \in \CAP ( \R^{d} )$ and $\phi \in C ( \B^{d} )$ is its canonic extension, then
\begin{equation*}
\CM ( f ) = \int_{\B^{d}} \phi ( \omega ) \, d \PPP ( \omega ) = : \E ( \phi ) .
\end{equation*}
Denote by $\epsilon_{\lambda}$ the canonic extension of $e_{\lambda}$, $\lambda \in \R^{d}$. Thus, $\{ \epsilon_{\lambda} \}_{\lambda \in \R^{d}}$ is the set of continuous characters of the group $\B^{d}$ which forms an orthonormal basis of $L^{2} ( \B^{d} , d \PPP )$.

\subsection{Operators with linear plus almost periodic magnetic potential} \label{s5}

Assume now that $b \in \CAP ( \R^{2} ; \R )$ and $V \in \CAP ( \R^{2} ; M_{2} )$. Recalling that $b_{0}$ is the mean value of $b$, set
\begin{equation*}
A_{0} : = \Big( - \frac{b_{0} x_{2}}{2} , \frac{b_{0} x_{1}}{2} \Big) ,
\end{equation*}
so that $\curl A_{0} = b_{0}$. Further, put $\widetilde{b} : = b - b_{0}$, and assume that there exists $\widetilde{A} \in \CAP ( \R^{2} ; \R^{2} )$ such that $\curl \widetilde{A} = \widetilde{b}$. This is for example the case if
\begin{equation} \label{2}
b(x) = b_{0} + \sum_{\lambda \in J_{0} ( b )} b_{\lambda} e_{\lambda} ( x ) , \quad x \in \R^{2} ,
\end{equation}
where $b_{\lambda} = \overline{b_{- \lambda}}$ for all $\lambda \in J ( b )$ and
\begin{equation} \label{3a}
\sum_{\lambda \in J_{0} ( b )} \vert b_{\lambda} \vert \big( 1 + \vert \lambda \vert^{- 1} \big) < \infty .
\end{equation}
Then $\widetilde{A}$ can be chosen in the form
\begin{equation} \label{5}
\widetilde{A} ( x ) = \bigg( i \sum_{\lambda \in J_{0} ( b )} b_{\lambda} \frac{\lambda_{2}}{\vert \lambda \vert^{2}} e_{\lambda} ( x ) , - i \sum_{\lambda \in J_{0} ( b )} b_{\lambda} \frac{\lambda_{1}}{\vert \lambda \vert^{2}} e_{\lambda} ( x ) \bigg) , \quad x \in \R^{2} .
\end{equation}
Thus, $\widetilde{A} \in \WAP ( \R^{2} ; \R^{2} )$ and $\curl \widetilde{A} = \widetilde{b}$. Eventually, we have $\curl A = b$ for $A : = A_{0} + \widetilde{A}$.

Let $\alpha \in C ( \B^{2} ; \R^{2} )$, $\beta \in C ( \B^{2} ; \R )$, and $\Upsilon \in C ( \B^{2} ; M_{2} )$ be the canonic extensions of $\widetilde{A}$, $b$ and $V$ respectively. Set
\begin{equation*}
\CA_{\omega} ( x ) : = \alpha ( \omega + \iota ( x ) ) , \qquad \CB_{\omega} ( x ) : = \beta ( \omega + \iota ( x ) ) , \qquad \CV_{\omega} ( x ) : = \Upsilon ( \omega + \iota ( x ) ) ,
\end{equation*}
for $x \in \R^{2}$ and $\omega \in \B^{2}$. On $\Dom h ( A_{0} )$ define the operators
\begin{equation} \label{j32}
\CH_{\omega}^{\pm} : = ( - i \nabla - A_{0} - \CA_{\omega} )^{2} \pm \CB_{\omega} ,
\end{equation}
self-adjoint in $L^{2} ( \R^{2} )$, and on $\Dom h ( A_{0} ) \oplus \Dom h ( A_{0} )$ define the operator
\begin{equation} \label{j33}
\CH_{\omega} = \left(
\begin{array} {cc}
\CH_{\omega}^{-} & 0\\
0 & \CH_{\omega}^{+}
\end{array}
\right) + \CV_{\omega} , \quad \omega \in \B^{2} ,
\end{equation}
self-adjoint in $L^{2} ( \R^{2} ; \C^{2} )$. Evidently, $\CH_{0} = H ( A , V )$. Note that the operator family $\{ \CH_{\omega} \}_{\omega \in \B^{2}}$ is continuous in the norm resolvent sense.

For $\xi \in \R^{2}$ introduce the unitary operators $\CU_{\xi} : L^{2} ( \R^{2} ) \to L^{2} ( \R^{2} )$ by
\begin{equation*}
( \CU_{\xi} f ) ( x ) = e^{i \frac{b_{0}}{2} ( \xi_{1} x_{2} - x_{1} \xi_{2} )} f ( x - \xi ) , \quad x \in \R^{2} , \quad f \in L^{2} ( \R^{2} ) .
\end{equation*}
We have
\begin{equation} \label{j28}
\CU_{\xi} ( - i \nabla - A_{0} ) \CU_{\xi}^{*} = - i \nabla - A_{0} , \qquad \CU_{\xi} \CB_{\omega} \CU_{\xi}^{*} = \CB_{\CT_{\xi} \omega} , \qquad \CU_{\xi} \CV_{\omega} \CU_{\xi}^{*} = \CV_{\CT_{\xi} \omega} ,
\end{equation}
and
\begin{equation} \label{j30}
\CU_{\xi} \CA_{\omega} \CU_{\xi}^{*} = \CA_{\CT_{\xi} \omega} ,
\end{equation}
where
\begin{equation} \label{j10a}
\CT_{\xi} \omega : = \omega - \iota ( \xi ) , \quad \omega \in \B^{2} , \quad \xi \in \R^{2} .
\end{equation}
Hence,
\begin{equation} \label{j10}
\CU_{\xi} \CH_{\omega} \CU_{\xi}^{*} = \CH_{\CT_{\xi} \omega} , \quad \omega \in \B^{2} , \quad \xi \in \R^{2} .
\end{equation}
We recall that a group of measure preserving automorphisms of $\B^{2}$, homomorphic to $\R^{2}$, is called $\R^{2}$-ergodic if any set $S \subset \CF$ invariant under the action of this group, satisfies either $\PPP ( S ) = 0$ or $\PPP ( S ) = 1$.

\begin{lem}\sl \label{l1}
The group $\{ \CT_{\xi} \}_{\xi \in \R^{d}}$ defined in \eqref{j10a} is $\R^{2}$-ergodic.
\end{lem}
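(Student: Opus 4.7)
The plan is to reduce $\R^2$-ergodicity to the statement that the only elements of $L^2(\B^2, d\PPP)$ invariant under $\CT_\xi$ for every $\xi \in \R^2$ are the (a.e.) constants, and then to prove that via the character basis $\{\epsilon_\lambda\}_{\lambda \in \R^2}$ provided by the excerpt.

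First I would record the routine facts. Since $\iota : \R^2 \to \B^2$ is a group homomorphism, the map $\xi \mapsto \CT_\xi$ is a group homomorphism from $\R^2$ into the translations of $\B^2$, and each $\CT_\xi$ is a topological automorphism of $\B^2$. Because $\PPP$ is the Haar measure on the abelian compact group $\B^2$, it is translation invariant, so each $\CT_\xi$ preserves $\PPP$. Next, if $S \in \CF$ is invariant under all $\CT_\xi$ (modulo null sets), then $f := \one_S \in L^2(\B^2, d\PPP)$ satisfies $f \circ \CT_\xi = f$ in $L^2$ for every $\xi \in \R^2$.

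The heart of the argument is then to expand $f = \sum_{\lambda \in \R^2} c_\lambda \epsilon_\lambda$ in the orthonormal basis of $L^2(\B^2, d\PPP)$. Since $\epsilon_\lambda$ is a continuous character of $\B^2$ whose restriction to $\iota(\R^2)$ equals $e_\lambda$, one has $\epsilon_\lambda(\omega - \iota(\xi)) = \epsilon_\lambda(\omega) \, \epsilon_\lambda(-\iota(\xi)) = e^{-i \lambda \cdot \xi} \, \epsilon_\lambda(\omega)$, so
\begin{equation*}
f \circ \CT_\xi = \sum_{\lambda \in \R^2} c_\lambda \, e^{-i \lambda \cdot \xi} \, \epsilon_\lambda .
\end{equation*}
Comparing Fourier coefficients of $f$ and $f \circ \CT_\xi$ gives $c_\lambda ( 1 - e^{-i \lambda \cdot \xi} ) = 0$ for every $\lambda \in \R^2$ and every $\xi \in \R^2$. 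For any $\lambda \neq 0$ one can choose $\xi$ with $\lambda \cdot \xi \notin 2 \pi \Z$, forcing $c_\lambda = 0$. Hence $f = c_0$ a.e., so $\one_S$ is a.e.\ constant and $\PPP(S) \in \{ 0 , 1 \}$.

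I do not anticipate a serious obstacle; the mild subtlety is making sure the action on the character basis is computed correctly, which rests on $\iota$ being a homomorphism and on $\iota(\R^2)$ being dense in $\B^2$ (the latter ensuring that a character of $\B^2$ is determined by its restriction to $\iota(\R^2)$, and hence that $\{\epsilon_\lambda\}_{\lambda \in \R^2}$ is exactly the dual group of $\B^2$ and forms an orthonormal basis, as asserted in the excerpt).
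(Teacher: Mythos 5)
Your proof is correct, but it takes a genuinely different route from the paper's. The paper argues on the measure side: it introduces $\mu_S(C) := \PPP(S\cap C)$, uses the invariance of $S$ together with the continuity of $\omega \mapsto \PPP(S\cap(C+\omega))$ and the density of $\iota(\R^2)$ in $\B^2$ to show $\mu_S$ is translation-invariant under all of $\B^2$, and then invokes the uniqueness of the Haar measure to get $\mu_S = a\,\PPP$, whence $\PPP(S)\in\{0,1\}$. You argue on the Fourier side: expand $\one_S$ in the character orthonormal basis $\{\epsilon_\lambda\}_{\lambda\in\R^2}$ of $L^2(\B^2,d\PPP)$ (a fact the paper states), observe that $\CT_\xi$ acts on the $\lambda$-mode by the scalar $e^{-i\lambda\cdot\xi}$, and deduce that all non-zero modes vanish, so $\one_S$ is a.e.\ constant. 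Both arguments ultimately rest on the same two ingredients — $\PPP$ being Haar and $\iota(\R^2)$ being dense — but the paper's is more elementary in that it only needs uniqueness of Haar measure, not the full orthonormal-basis statement, while yours proves the $L^2$-formulation of ergodicity directly and makes transparent the mechanism (no non-trivial character of $\B^2$ is fixed by the dense translation subgroup), which is the standard criterion for ergodicity of a translation on a compact abelian group.
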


\begin{proof} Due to the invariance of the Haar measure $\PPP$ under the action of $\B^{2}$, $\{ \CT_{\xi} \}_{\xi \in \R^{2}}$ is a group of measure preserving automorphisms. Assume that $S \in \CF$ is invariant under this group, i.e. $\CT_{\xi} S = S$ for all $\xi \in \R^{2}$.
Define the measure
\begin{equation*}
\mu_{S} ( C ) : = \PPP ( S \cap C ) , \quad C \in \CF.
\end{equation*}
We will show that this measure is invariant under the action of $\B^{2}$. Since $\PPP$ is a Haar measure on $\B^{2}$ and $S$ is invariant, we have
\begin{equation*}
\mu_{S} ( C + \iota ( \xi ) ) = \PPP ( S \cap ( C + \iota ( \xi ) ) ) = \PPP ( ( S - \iota ( \xi ) ) \cap C ) = \PPP ( S \cap C ) = \mu_{S} ( C ) ,
\end{equation*}
for $\xi \in \R^{2}$. From the continuity of the function $\B^{2} \ni \omega \mapsto \PPP ( S \cap ( C + \omega ) ) \in [ 0 , 1 ]$ and the density of $\iota ( \R^{2} )$ in $\B^{2}$, this yields
\begin{equation*}
\mu_{S} ( C + \omega ) = \mu_{S} ( C ) , \quad C \subset \CF , \quad \omega \in \B^{2} ,
\end{equation*}
i.e. the measure $\mu_{S}$ is invariant under $\B^{2}$. By the uniqueness property of the Haar measure, there exists a constant $a = a ( S ) \geq 0$ such that
\begin{equation*}
\mu_{S} ( C ) = a \PPP ( C ) , \quad C \in \CF .
\end{equation*}
If $a = 0$, then $\PPP ( S ) = \mu_{S} ( S ) = 0$. If $a > 0$ then
\begin{equation*}
\PPP ( S ) = \frac{\mu_{S} ( S )}{a} = \frac{\mu_{S} ( \B^{2} )}{a} = \PPP ( \B^{2} ) = 1 .
\end{equation*}
Hence, $\{ \CT_{\xi} \}_{\xi \in \R^{2}}$ is an $\R^{2}$-ergodic group of automorphisms.
\end{proof}

\begin{xrem}
Various versions of Lemma \ref{l1} are available in the literature (see e.g. \cite[Section 10.1]{cfks} for a somewhat different but closely related situation). We include its proof just for the sake of completeness of the exposition.
\end{xrem}

Using standard properties of ergodic operator families (see e.g. \cite{km, k, pf}), we obtain the following

\begin{thm}\sl \label{th1}
Let $b \in \CAP ( \R^{2} ; \R )$ and $V \in \CAP ( \R^{2} ; M_{2} )$ be such that there exists $\widetilde{A} \in \CAP ( \R^{2} ; \R^{2} )$ with $\curl ( A_{0} + \widetilde{A} ) = b$. Then, \newline
{\rm (i)} There exist closed subsets $\Sigma$, $\Sigma_{\rm ac}$, $\Sigma_{\rm sc}$ and $\Sigma_{\rm pp}$ of $\R$ such that $\PPP$-almost surely
\begin{equation*}
\sigma ( \CH_{\omega} ) = \Sigma, \qquad \sigma_{\rm ac} ( \CH_{\omega} ) = \Sigma_{\rm ac} , \qquad \sigma_{\rm sc} ( \CH_{\omega} ) = \Sigma_{\rm sc} , \qquad \sigma_{\rm pp} ( \CH_{\omega} ) = \Sigma_{\rm pp} .
\end{equation*}
{\rm (ii)} Moreover, $\PPP$-almost surely
\begin{equation*}
\sigma_{\rm disc} ( \CH_{\omega} ) = \emptyset .
\end{equation*}
{\rm (iii)} Any $E \in \R$ is $\PPP$-almost surely not an eigenvalue of $\CH_{\omega}$ of finite multiplicity.
\end{thm}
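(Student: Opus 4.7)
The plan is to treat $\{\CH_\omega\}_{\omega \in \B^2}$ as a measurable, ergodic, covariant family of self-adjoint operators and derive (i)--(iii) from standard ergodic considerations. All prerequisites are available: norm-resolvent continuity of $\omega \mapsto \CH_\omega$ (stated after \eqref{j33}) gives strong measurability; relation \eqref{j10} provides the covariance $\CH_{\CT_\xi \omega} = \CU_\xi \CH_\omega \CU_\xi^*$ with $\CU_\xi$ unitary; and Lemma \ref{l1} supplies $\R^2$-ergodicity of $\{\CT_\xi\}_{\xi \in \R^2}$.

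For (i), first I would dispatch the spectrum itself: for each Borel set $B \subset \R$, the event $\{\omega : \sigma(\CH_\omega) \cap B \neq \emptyset\}$ is $\CT_\xi$-invariant by \eqref{j10}, hence of $\PPP$-measure $0$ or $1$ by Lemma \ref{l1}. Letting $B$ range over a countable basis of open sets of $\R$ produces a deterministic closed set $\Sigma$ with $\sigma(\CH_\omega) = \Sigma$ $\PPP$-a.s. The statements about $\Sigma_{\rm ac}$, $\Sigma_{\rm sc}$ and $\Sigma_{\rm pp}$ follow from the same argument applied to the supports of the ac, sc, and pp components of the spectral measures, combined with the covariance $P^{\bullet}_{\CT_\xi\omega} = \CU_\xi P^{\bullet}_\omega \CU_\xi^*$ of the corresponding spectral projections. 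This is standard; see \cite{km, k, pf}.

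Parts (ii) and (iii) are both instances of a single dichotomy. For any Borel set $I \subset \R$, the function $\omega \mapsto \rank P_\omega(I)$ is $\CT_\xi$-invariant (since $\CU_\xi$ is unitary), hence $\PPP$-a.s. equal to a constant $N(I) \in \{0, 1, 2, \ldots\} \cup \{\infty\}$. The claim is that $N(I) \in \{0, \infty\}$. Suppose $0 < N(I) < \infty$, let $\chi$ denote the indicator of $[0,1]^2$, and set $f(\omega) := \Tr(\chi P_\omega(I) \chi)$. Using \eqref{j10} and the identity $\CU_\xi^* \chi \CU_\xi = \chi(\cdot + \xi)$ (the phase in $\CU_\xi$ cancels in the conjugation of a multiplication operator), a short computation with an orthonormal basis $\{\psi_j\}_{j=1}^{N(I)}$ of $\mathrm{Ran}\, P_\omega(I)$ yields
\begin{equation*}
\int_{[-L,L]^2} f(\CT_\xi \omega) \, d\xi \leq N(I), \qquad L > 0,
\end{equation*}
on the full-measure set where $\rank P_\omega(I) = N(I)$. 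Taking expectation with respect to $\PPP$ and using measure-preservation of $\CT_\xi$ gives $(2L)^2 \E f \leq N(I)$, so $\E f = 0$. Thus $f = 0$ $\PPP$-a.s., i.e.\ $\chi P_\omega(I) = 0$ a.s. Repeating the argument with $\chi$ replaced by $\chi(\cdot - \eta)$ for every $\eta \in \Q^2$ and discarding a countable union of null sets, one obtains $P_\omega(I) = 0$ $\PPP$-a.s., contradicting $N(I) > 0$. Specializing $I$ to open intervals with rational endpoints proves (ii), since any isolated eigenvalue of $\CH_\omega$ of finite multiplicity would produce such an $I$ with $0 < N(I) < \infty$; specializing to $I = \{E\}$ for fixed $E \in \R$ proves (iii).

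\textbf{Main obstacle.} The technical heart is the passage from the localized statement $\chi P_\omega(I) = 0$ (whose $\PPP$-null exceptional set \emph{a priori} depends on the box) to the unconditional $P_\omega(I) = 0$ on a single $\PPP$-full set. This is handled by combining the covariance relation with a countable dense set of translations and aggregating the corresponding null sets; the other parts of the proof are bookkeeping within the standard ergodic framework.
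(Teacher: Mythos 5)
Your proposal is correct and follows essentially the same approach as the paper, which proves Theorem \ref{th1} by invoking the standard framework for ergodic operator families (\cite{km, k, pf}); the ingredients you marshal — weak measurability from norm-resolvent continuity, the covariance \eqref{j10}, the $\R^2$-ergodicity from Lemma \ref{l1}, and the $\{0,\infty\}$ rank dichotomy obtained by comparing a translated localized trace against a finite total rank — are precisely the ones those references use, and also the ones the paper itself spells out in Lemma \ref{lau1} when it extends the result to Theorem \ref{th2}. The only cosmetic difference is that you integrate the localized trace over $\xi \in [-L,L]^2$ whereas the paper's Lemma \ref{lau1} sums over $\xi \in \Z^2$ using the orthonormality of $\{\CU_\xi^*\phi\}_{\xi\in\Z^2}$ for $\phi$ supported in a unit cube; the two are equivalent.
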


\begin{xrem}
In the case $V = 0$, the operator family $\{ \CH_{\omega} \}_{\omega \in \B^{2}}$ was introduced in the proof of \cite[Lemma 3.2]{r5}. There are many works considering the non-magnetic Schr\"odinger operator $- \Delta + V$ acting in $L^{2} ( \R^{d} )$, $d \geq 1$, with almost periodic scalar potential $V$. However, usually the corresponding ergodic family $- \Delta + \CV_{\omega}$ is defined for $\omega$ on {\em the hull} of $V$ (see e.g \cite{pf, as}). Our choice to define the ergodic family $\CH_{\omega}$ for $\omega \in \B^{2}$ is motivated by the fact that there are several scalar almost periodic functions involved in $H ( A , V )$ and, on the other hand, the hull of any scalar function $f \in \CAP ( \R^{2} )$ is a subgroup of $\B^{2}$.
\end{xrem}

Applying a suitable continuity argument, we show in Corollary \ref{f1a} below that the results of Theorem \ref{th1} concerning $\sigma ( \CH_{\omega} )$ and $\sigma_{\rm disc} ( \CH_{\omega} )$ hold for {\em every} $\omega \in \B^{2}$.

\begin{cor}\sl \label{f1a}
Under the hypotheses of Theorem \ref{th1}, we have
\begin{gather}
\sigma ( \CH_{\omega} ) = \Sigma ,   \label{j23} \\
\sigma_{\rm disc} ( \CH_{\omega} ) = \emptyset ,  \label{j24}
\end{gather}
for any $\omega \in \B^{2}$. In particular, $\sigma_{\rm disc} ( H ) = \sigma_{\rm disc} ( \CH_{0} ) = \emptyset$.
\end{cor}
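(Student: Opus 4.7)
The plan is to combine the almost-sure conclusions of Theorem \ref{th1}, the unitary equivalence \eqref{j10}, the density of $\iota ( \R^{2} )$ in $\B^{2}$, and the norm resolvent continuity of $\omega \mapsto \CH_{\omega}$ stated after \eqref{j33}. Fix any $\omega_{*}$ in the full-measure set $\Omega_{*} : = \{ \omega \in \B^{2} \, | \, \sigma ( \CH_{\omega} ) = \Sigma \text{ and } \sigma_{\rm disc} ( \CH_{\omega} ) = \emptyset \}$ produced by Theorem \ref{th1}. By \eqref{j10}, each $\CH_{\omega_{*} - \iota ( \xi )}$ is unitarily equivalent to $\CH_{\omega_{*}}$, so the orbit $\CO : = \omega_{*} + \iota ( \R^{2} )$ lies in $\Omega_{*}$. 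Density of $\iota ( \R^{2} )$ in $\B^{2}$ implies that $\CO$ is dense in $\B^{2}$, so for every $\omega \in \B^{2}$ there is a net $\omega_{\alpha} \in \CO$ with $\omega_{\alpha} \to \omega$, and norm resolvent continuity gives $( \CH_{\omega_{\alpha}} - z )^{- 1} \to ( \CH_{\omega} - z )^{- 1}$ in operator norm for every $z \in \C \setminus \R$.

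For \eqref{j23} I would exploit the self-adjoint identity
\begin{equation*}
\big\| ( T - ( \mu + i \veps ) )^{- 1} \big\|^{- 2} = \dist ( \mu , \sigma ( T ) )^{2} + \veps^{2} , \quad \mu \in \R , \; \veps > 0 .
\end{equation*}
Applied to each $\CH_{\omega_{\alpha}}$ the right-hand side equals $\dist ( \mu , \Sigma )^{2} + \veps^{2}$; taking the limit along the net, the norm convergence of resolvents yields $\dist ( \mu , \sigma ( \CH_{\omega} ) ) = \dist ( \mu , \Sigma )$ for every $\mu \in \R$. Two closed subsets of $\R$ with the same distance function coincide, hence $\sigma ( \CH_{\omega} ) = \Sigma$.

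For \eqref{j24} I argue by contradiction. Suppose $\lambda \in \sigma_{\rm disc} ( \CH_{\omega} )$; then $\lambda$ is an isolated point of $\sigma ( \CH_{\omega} ) = \Sigma$ and the spectral projection $E_{\CH_{\omega}} ( \{ \lambda \} )$ has finite rank $m \geq 1$. Pick $\delta > 0$ with $I : = ( \lambda - \delta , \lambda + \delta )$ such that $I \cap \Sigma = \{ \lambda \}$; then $E_{\CH_{\omega}} ( I ) = E_{\CH_{\omega}} ( \{ \lambda \} )$ has rank $m$. For each $\omega_{\alpha} \in \CO$ the point $\lambda$ is isolated in $\sigma ( \CH_{\omega_{\alpha}} ) = \Sigma$ and hence an eigenvalue by self-adjointness; since $\sigma_{\rm disc} ( \CH_{\omega_{\alpha}} ) = \emptyset$, its multiplicity must be infinite, so $\rank E_{\CH_{\omega_{\alpha}}} ( I ) = \infty$. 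Because $\lambda \pm \delta \notin \Sigma$, norm resolvent convergence upgrades to norm convergence of spectral projections $E_{\CH_{\omega_{\alpha}}} ( I ) \to E_{\CH_{\omega}} ( I )$; the norm distance between two orthogonal projections of different ranks is at least $1$, which contradicts the finite rank of the limit.

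The main subtlety I anticipate is that $\B^{2}$ is not first countable, so convergence throughout must be phrased with nets rather than sequences. This is harmless, since the spectral-theoretic consequences of norm resolvent convergence that we use (continuity of the operator norm of the resolvent, and norm continuity of spectral projections of intervals whose endpoints avoid $\Sigma$) go through verbatim for nets.
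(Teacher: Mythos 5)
Your proof is correct and follows the same overall strategy as the paper's: exhibit a dense set of parameters where the almost-sure conclusions of Theorem \ref{th1} hold, exploit norm resolvent continuity to pass to arbitrary $\omega$, use the resolvent-norm identity for a self-adjoint operator to identify $\sigma(\CH_\omega)$ with $\Sigma$, and derive a contradiction for the discrete spectrum from the behavior of spectral projections under norm resolvent convergence along nets.

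Two cosmetic differences are worth noting. First, the paper takes the dense set to be the full-measure set $S_0$ from Theorem \ref{th1} directly (full-measure sets are dense because $\PPP(U)>0$ for every nonempty open $U$), whereas you restrict further to the orbit $\omega_* + \iota(\R^2)$ of a single good point via the unitary covariance \eqref{j10}; both are valid, the paper's is marginally more economical since it needs no reference to the covariance relation. Second, for \eqref{j24} the paper invokes a trace-comparison lemma from Birman--Solomjak to bound $\Tr\one_{(E-\veps,E+\veps)}(\CH_\omega)$ from below by $\Tr\one_{(E-\veps/2,E+\veps/2)}(\CH_{\omega_\alpha})$, while you instead upgrade norm resolvent convergence to norm convergence of the spectral projections of $I=(\lambda-\delta,\lambda+\delta)$ and note that orthogonal projections at distance $<1$ must have equal rank. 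These are essentially the same idea packaged differently; your version is somewhat more self-contained, the paper's keeps the citation compact. One small point on your wording: it is cleaner to fix the spectral window once and for all so the endpoints avoid $\Sigma$, since the projections $\one_I(\CH_{\omega_\alpha})$ and $\one_I(\CH_\omega)$ then coincide with projections onto intervals whose endpoints lie in the common resolvent set, which is precisely the hypothesis needed for the norm convergence you use. You have done this, so the argument is sound.
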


\begin{proof}
Since $\PPP ( U ) > 0$ for any open non-empty $U \subset \B^{2}$, every $S \in \CF$ with $\PPP (S) = 1$ is dense in $\B^{2}$. Set
\begin{equation*}
S_{0} : = \{ \omega \in \B^{2} \, | \, \sigma ( \CH_{\omega} ) = \Sigma \text{ and } \sigma_{\rm disc} ( \CH_{\omega} ) = \emptyset \} .
\end{equation*}
By Theorem \ref{th1}, we have $\PPP ( S_{0} ) = 1$, and then $S_{0}$ is dense in $\B^{2}$. Let $\omega \in \bd$. Pick a net $\{ \omega_{\alpha} \}_{\alpha \in \CI} \subset S_{0}$ which converges to $\omega$. Then we have
\begin{equation} \label{j20}
( \CH_{\omega_{\alpha}} - z )^{- 1} \longrightarrow ( \CH_{\omega} - z )^{- 1} , \quad z \in \C \setminus \Big[ - \sup_{x \in \R^{2}} \vert V ( x ) \vert , \infty \Big) ,
\end{equation}
in norm. The rest of the proof is based on standard perturbation arguments. We include some details just because we're dealing with operator nets instead of operator sequences.

From the spectral theorem, the resolvent of a self-adjoint operator $T$ satisfies
\begin{equation} \label{k1}
\Vert ( T - z )^{- 1} \Vert = \frac{1}{\dist ( z , \sigma ( T ) )} , \quad z \in \C \setminus \R .
\end{equation}
Using \eqref{j20}, \eqref{k1} and $\sigma ( \CH_{\omega_{\alpha}} ) = \Sigma$ for all $\alpha \in \CI$, we deduce
\begin{equation*}
\dist ( z , \sigma ( \CH_{\omega} ) ) = \dist ( z , \Sigma ) , \quad z \in \C \setminus \R .
\end{equation*}
Since $\Sigma$ and $\sigma ( \CH_{\omega_{\alpha}} )$ are closed subsets of $\R$, this easily implies \eqref{j23}.

Now assume that there exists $E \in \sigma_{\rm disc} ( \CH_{\omega} )$. Then, \eqref{j23} shows that $E$ is an isolated eigenvalue of infinite multiplicity of $\CH_{\omega_{\alpha}}$ for any $\alpha \in \CI$. Now pick $\varepsilon > 0$ such that $( E - \varepsilon , E + \varepsilon ) \cap \sigma ( \CH_{\omega} ) = \{ E \}$. Passing to resolvents and applying  \cite[Chapter 9, Section 4, Lemma 3]{bs}, we find that there exists $\beta \in \CI$ such that $\alpha \geq \beta$ implies
\begin{equation} \label{j22}
\Tr \one_{( E - \varepsilon , E + \varepsilon )} ( \CH_{\omega} ) \geq \Tr \one_{( E - \varepsilon / 2 , E + \varepsilon / 2 )} ( \CH_{\omega_{\alpha}} ) .
\end{equation}
Here $\one_{S} ( T )$ denotes the spectral projection of the self-adjoint operator $T$ corresponding to the Borel set $S \subset \R$. Since the l.h.s. of \eqref{j22} is finite and its r.h.s. is infinite, we obtain a contradiction which gives \eqref{j24}.
\end{proof}

\begin{xrem}\rm
An analogue of Corollary \ref{f1a} for the case of  $- \Delta + V$ in $L^{2} ( \R^{d} )$, $d \geq 1$, with almost periodic $V$, is contained in \cite[Theorems A.2.1, A.2.2]{as}. As already mentioned, in \cite{as}, the operator family $- \Delta + \CV_{\omega}$ is defined on the hull of $V$.
\end{xrem}

\subsection{Operators with general almost periodic magnetic fields} \label{s6}

Our next goal is to investigate the ergodic properties of the operator $H ( A , V )$ assuming only that $b \in \CAP ( \R^{2} ; \R )$ and $V \in \CAP ( \R^{2} ; M_{2} )$ but not that there exists $\widetilde{A} \in \CAP ( \R^{2} ; \R^{2} )$ which generates $\widetilde{b} = b - b_{0}$. In fact, we will suppose a bit more about $b$, namely that $b \in \WAP ( \R^{2} ; \R )$. Then we have
\begin{equation*}
\CB_{\omega} ( x ) = b_{0} + \sum_{\lambda \in J_{0} ( b )} b_{\lambda} \eps_{\lambda} ( \omega ) e_{\lambda}( x ) , \quad \omega \in \B^{2} , \quad x \in \R^{2} .
\end{equation*}
For $\omega \in \B^{2}$ and $x \in \R^{2}$, set
\begin{equation*}
\SA_{\omega} ( x ) : = \bigg( i \sum_{\lambda \in J_{0} ( b )} b_{\lambda} \frac{\lambda_{2} \epsilon_{\lambda} ( \omega )}{\vert \lambda \vert^{2}{}}(e_{\lambda} ( x ) - 1 ) , - i \sum_{\lambda \in J_{0} ( b )} b_{\lambda} \frac{\lambda_{1} \epsilon_{\lambda} ( \omega )}{\vert \lambda \vert^{2}}(e_{\lambda} ( x ) - 1 ) \bigg) .
\end{equation*}
Since we do not assume any more that \eqref{3a} is true, generically $\SA_{\omega} \notin \CAP(\R^{2}; \R^{2})$. However, the series above converge absolutely, uniformly with respect to $\omega \in \B^{2}$, and locally uniformly with respect to $x \in \R^{2}$. It is easy to see that  $\SA_{\omega} \in C^{1}(\R^{2}; \R^{2})$ and $\curl \SA_{\omega} = \CB_{\omega} - b_{0}$ for each $\omega \in \B^{2}$. Moreover, $\vert e_{\lambda} ( x ) - 1 \vert \leq \vert \lambda \vert \vert x \vert$ gives
\begin{equation} \label{au4}
\vert \SA_{\omega} ( x ) \vert \leq \sqrt{2} \bigg( \sum_{\lambda \in J_{0} ( b )} \vert b_{\lambda} \vert \bigg) \vert x \vert , \quad \omega \in \B^{2} , \quad x \in \R^{2} ,
\end{equation}
and, using in addition that $\vert e_{\lambda} ( x ) - 1 \vert \leq 2$, we get
\begin{equation} \label{au30}
\lim_{\vert x \vert \to \infty} \vert x \vert^{-1} \SA_{\omega} ( x ) = 0 .
\end{equation}
Similarly to \eqref{j32}--\eqref{j33}, set
\begin{equation*}
\SH_{\omega}^{\pm} : = ( - i \nabla - A_{0} - \SA_{\omega} )^{2} \pm \CB_{\omega} , \qquad \SH_{\omega} =
\left( \begin{array}{cc}
\SH_{\omega}^{-} & 0\\
0 & \SH_{\omega}^{+}
\end{array} \right) + \CV_{\omega}, \quad \omega \in \B^{2} .
\end{equation*}
Again, $\SH_{0} = H ( A_{0} + \SA_{0} , V )$.

\begin{proposition}\sl \label{pau1}
Let $z \in \C \setminus \R$ and $f \in L^{2} ( \R^{2} ; \C^{2} )$. Then the function
\begin{equation*}
\B^{2} \ni \omega \longmapsto ( \SH_{\omega} - z )^{- 1} f \in L^{2} ( \R^{2} ; \C^{2} ) ,
\end{equation*}
is continuous.
\end{proposition}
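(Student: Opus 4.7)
The plan is to reduce the problem to establishing, for each $u$ in the common core $C_{0}^{\infty}(\R^{2};\C^{2})$, that $(\SH_{\omega_{0}} - \SH_{\omega})u \to 0$ in $L^{2}$ as $\omega \to \omega_{0}$, and then close the argument with a standard resolvent identity combined with density. First I would verify that $C_{0}^{\infty}(\R^{2};\C^{2})$ is a core of $\SH_{\omega}$ for every $\omega \in \B^{2}$. Since $b \in \WAP(\R^{2};\R)$, termwise differentiation of the series defining $\SA_{\omega}$ is legitimate and yields $\SA_{\omega} \in C^{\infty}(\R^{2};\R^{2})$ with $\nabla \SA_{\omega}$ bounded uniformly in $\omega$ and $x$; in particular $A_{0} + \SA_{\omega} \in C^{1}(\R^{2};\R^{2})$, so \eqref{j1} holds and $h(A_{0} + \SA_{\omega})$ is essentially self-adjoint on $C_{0}^{\infty}(\R^{2})$ by \cite{ls}. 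A direct computation on the series gives $\dive \SA_{\omega} = 0$, i.e., $\SA_{\omega}$ is in Coulomb gauge. Since $\CB_{\omega}$ and $\CV_{\omega}$ are bounded, $\SH_{\omega}$ is then essentially self-adjoint on $C_{0}^{\infty}(\R^{2};\C^{2})$.

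Second, for $u \in C_{0}^{\infty}(\R^{2};\C^{2})$ with $\supp u \subset K$ compact, I would expand
\begin{equation*}
(\SH_{\omega}^{\pm} - \SH_{\omega_{0}}^{\pm}) v = 2 i (\SA_{\omega} - \SA_{\omega_{0}}) \cdot \nabla v + \big( \vert A_{0} + \SA_{\omega} \vert^{2} - \vert A_{0} + \SA_{\omega_{0}} \vert^{2} \big) v \pm ( \CB_{\omega} - \CB_{\omega_{0}} ) v
\end{equation*}
for the scalar components, using $\dive \SA_{\omega} = \dive \SA_{\omega_{0}} = 0$ to eliminate the first-order divergence term, and then add $(\CV_{\omega} - \CV_{\omega_{0}}) u$ for the matrix part. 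The bounded contributions $\CB_{\omega} - \CB_{\omega_{0}}$ and $\CV_{\omega} - \CV_{\omega_{0}}$ tend to $0$ uniformly on $\R^{2}$ by uniform continuity of $\beta$ and $\Upsilon$ on the compact group $\B^{2}$. The key technical point is the uniform convergence $\SA_{\omega} \to \SA_{\omega_{0}}$ on $K$: denoting by $\SA_{\omega}^{(N)}$ the partial sum over $\vert \lambda \vert \leq N$, the tail estimate behind \eqref{au4} gives $\sup_{\omega \in \B^{2}, x \in K} \vert \SA_{\omega}(x) - \SA_{\omega}^{(N)}(x) \vert \leq \sqrt{2} \, (\sup_{x \in K} \vert x \vert ) \sum_{\vert \lambda \vert > N} \vert b_{\lambda} \vert$, arbitrarily small for $N$ large, while the finite sum $\SA_{\omega}^{(N)}$ is a trigonometric polynomial in $\omega$ for each $x$, uniformly continuous on $\B^{2} \times K$. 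Combining these gives $(\SH_{\omega} - \SH_{\omega_{0}}) u \to 0$ in $L^{2}$.

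Finally, given $f \in L^{2}(\R^{2};\C^{2})$ and $\eps > 0$, density of $(\SH_{\omega_{0}} - z) C_{0}^{\infty}(\R^{2};\C^{2})$ in $L^{2}(\R^{2};\C^{2})$ yields $u_{0} \in C_{0}^{\infty}(\R^{2};\C^{2})$ with $\Vert f - f_{0} \Vert < \eps \vert \im z \vert / 3$, where $f_{0} : = (\SH_{\omega_{0}} - z) u_{0}$. The second resolvent identity, together with $\Vert (\SH_{\omega} - z)^{-1} \Vert \leq \vert \im z \vert^{-1}$, gives
\begin{equation*}
\big\Vert (\SH_{\omega} - z)^{-1} f_{0} - (\SH_{\omega_{0}} - z)^{-1} f_{0} \big\Vert \leq \vert \im z \vert^{-1} \big\Vert (\SH_{\omega_{0}} - \SH_{\omega}) u_{0} \big\Vert ,
\end{equation*}
which is $< \eps / 3$ on an open neighborhood of $\omega_{0}$ by the previous step; controlling the difference of resolvents applied to $f - f_{0}$ with the uniform bound $\vert \im z \vert^{-1}$ closes the argument at $\omega_{0}$ (formulated directly with neighborhoods rather than sequences, since $\B^{2}$ is not first countable). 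The main obstacle is precisely the uniform convergence of $\SA_{\omega}$ on compact sets: $\SA_{\omega}$ itself is not almost periodic and grows linearly, so uniform continuity on $\B^{2}$ is unavailable; the truncation argument compensates by exploiting the Wiener summability $\sum_{\lambda} \vert b_{\lambda} \vert < \infty$.
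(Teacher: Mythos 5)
Your proof is correct, but it takes a genuinely different route from the paper. The paper works with $(\SH_{\omega_\alpha}-z)^{-1}$ and $(\SH_\omega-z)^{-1}$ applied to $f$ directly, decomposing $\SH_\omega^{\pm}-\SH_{\omega_\alpha}^{\pm}$ via the creation/annihilation operators $a_\nu, a_\nu^*$ and, crucially, inserting the weight $\<x\>$ and its inverse to tame the linear growth of $\SA_\omega-\SA_{\omega_\alpha}$: the bounded multiplication operator $\SD(\omega,\omega_\alpha)=(a_\omega^{(*)}-a_{\omega_\alpha}^{(*)})\<x\>^{-1}$ tends to zero in norm, and the price to pay is uniform operator bounds for $\SC_\nu(z)$ and the commutators $[\<x\>,(\SH_\nu-z)^{-1}]$, $[\<x\>,\SC_\nu(z)]$. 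You instead pick $u_0$ in the common core $C_0^\infty(\R^2;\C^2)$ with $(\SH_{\omega_0}-z)u_0\approx f$, so that the linear growth of $\SA_\omega-\SA_{\omega_0}$ is harmless — only its restriction to the fixed compact set $\supp u_0$ matters — and then the problem reduces to locally uniform convergence $\SA_\omega\to\SA_{\omega_0}$, which you obtain by truncation plus the Wiener summability $\sum|b_\lambda|<\infty$. Your route avoids the commutator bookkeeping entirely; both approaches ultimately hinge on the same estimate (the paper's \eqref{au5}, your tail bound) and exploit the same Wiener-class hypothesis, so neither is strictly stronger, but yours is arguably more elementary. Two small repairs: (a) $\SA_\omega$ is in general only $C^1$, not $C^\infty$ — the second termwise derivative would need $\sum|b_\lambda||\lambda|<\infty$, which is not assumed; fortunately $C^1$ is exactly what \eqref{j1} requires; (b) the truncation $\SA_\omega^{(N)}$ should be taken over the first $N$ elements of an enumeration of the countable set $J_0(b)$ rather than over $\{|\lambda|\le N\}$, to guarantee it is a finite (hence genuinely trigonometric-polynomial) sum, since $J_0(b)$ may accumulate in a bounded region; the tail still tends to zero by dominated convergence for series.
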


\begin{proof}
Pick $\omega \in \B^{2}$ and a net $\{ \omega_{\alpha} \} \subset \B^{2}$ such that $\omega_{\alpha} \to \omega$. We will show that
\begin{equation} \label{au8}
\big\Vert \big( ( \SH_{\omega_{\alpha}} - z )^{- 1} - ( \SH_{\omega} - z )^{- 1} \big) f \big\Vert_{L^{2} ( \R^{2} ; \C^{2} )} \longrightarrow 0 .
\end{equation}
Since
\begin{equation*}
\big\Vert \big( ( \SH_{\omega_{\alpha}} - z )^{- 1} - ( \SH_{\omega} - z )^{- 1} \big) f \big\Vert_{L^{2} ( \R^{2} ; \C^{2} )} \leq \frac{2}{\vert \im z \vert} \Vert f \Vert_{L^{2} ( \R^{2} ; \C^{2} )} ,
\end{equation*}
and $C_{0}^{\infty} ( \R^{2} ; \C^{2} )$ is dense in $L^{2} ( \R^{2} ; \C^{2} )$, we can assume without loss of generality that $f \in C_{0}^{\infty} ( \R^{2} ; \C^{2} )$.
Further,
\begin{align}
\big( ( \SH_{\omega_{\alpha}} & - z )^{- 1} - ( \SH_{\omega} - z )^{- 1} \big) f   \nonumber  \\
&\!\!\!= ( \SH_{\omega_{\alpha}} - z )^{- 1} \left( \left(
\begin{array} {cc}
\SH_{\omega}^{-} - \SH_{\omega_{\alpha}}^{-} & 0  \\
0 & \SH_{\omega}^{+} - \SH_{\omega_{\alpha}}^{+}
\end{array}
\right) + \CV_{\omega} - \CV_{\omega_{\alpha}} \right) ( \SH_{\omega} - z )^{- 1} f .  \label{au9}
\end{align}
It is easy to check that
\begin{equation} \label{au1}
\big\Vert ( \SH_{\omega_{\alpha}} - z )^{-1} ( \CV_{\omega} - \CV_{\omega_{\alpha}} ) ( \SH_{\omega} - z )^{- 1} \big\Vert \leq \frac{\Vert \CV_{\omega} - \CV_{\omega_{\alpha}} \Vert}{\vert \im z \vert^{2}} \longrightarrow 0 ,
\end{equation}
as $\omega_{\alpha} \to \omega$. Next, write
\begin{align}
( \SH_{\omega_{\alpha}} - z & )^{- 1}
\left(
\begin{array} {cc}
\SH_{\omega}^{-} - \SH_{\omega_{\alpha}}^{-} & 0  \\
0 & \SH_{\omega}^{+} - \SH_{\omega_{\alpha}}^{+}
\end{array}
\right) ( \SH_{\omega} - z )^{- 1} f  \nonumber \\
={}& \SC_{\omega_{\alpha}} ( \overline{z} )^{*} \SD ( \omega , \omega_{\alpha} )^{*} \< x \> ( \SH_{\omega} - z )^{- 1} f + ( \SH_{\omega_{\alpha}} - z )^{- 1} \SD ( \omega , \omega_{\alpha} ) \< x \> \SC_{\omega} ( z ) f  \nonumber  \\
={}& \SC_{\omega_{\alpha}} ( \overline{z})^{*} \SD ( \omega , \omega_{\alpha} )^{*} ( \SH_{\omega} - z)^{- 1} \< x \> f + \SC_{\omega_{\alpha}} ( \overline{z})^{*} \SD ( \omega, \omega_{\alpha} )^{*} \big[ \< x \> , ( \SH_{\omega} - z )^{- 1} \big] f \nonumber \\
&+ ( \SH_{\omega_{\alpha}} - z )^{- 1} \SD ( \omega, \omega_{\alpha} ) \SC_{\omega} ( z ) \< x \> f + ( \SH_{\omega_{\alpha}} - z )^{- 1} \SD ( \omega , \omega_{\alpha} ) \big[ \< x \> , \SC_{\omega} ( z ) \big] f , \label{au6}
\end{align}
where
\begin{equation*}
\SC_{\nu} ( z ) : =
\left(
\begin{array}{cc}
a_{\nu} & 0\\
0 & a_{\nu}^{*}
\end{array}
\right) ( \SH_{\nu} - z )^{- 1} ,
\end{equation*}
with
\begin{equation*}
a_{\nu} : = a ( A_{0} + \SA_{\nu} ) , \quad \nu \in \B^{2} ,
\end{equation*}
the operator $a ( A )$ being defined in \eqref{au3},
\begin{equation*}
\SD ( \omega , \omega_{\alpha} ) : = \left(
\begin{array}{cc}
( a_{\omega}^{*} - a_{\omega_{\alpha}}^{*} ) \< x \>^{- 1} & 0 \\
0 & ( a_{\omega} - a_{\omega_{\alpha}} ) \< x \>^{- 1}
\end{array}
\right) ,
\end{equation*}
which is a multiplication operator, and $\< x \> : = ( 1 + \vert x \vert^{2} )^{1 / 2}$. Similarly to \eqref{au4},
\begin{equation} \label{au5}
\Vert \SD ( \omega , \omega_{\alpha} ) \Vert \leq \sqrt{2} \sum_{\lambda \in J_{0} ( b )} \vert b_{\lambda} \vert \, \vert \epsilon_{\lambda} ( \omega ) - \epsilon_{\lambda} ( \omega_{\alpha} ) \vert .
\end{equation}
Since $\sum_{\lambda \in J_{0} ( b )} \vert b_{\lambda} \vert < \infty$, $\vert \epsilon_{\lambda} ( \omega ) - \epsilon_{\lambda} ( \omega_{\alpha} ) \vert \leq 2$ for all $\lambda \in \R^{2}$ and $\vert \epsilon_{\lambda} ( \omega ) - \epsilon_{\lambda} ( \omega_{\alpha} ) \vert \to 0$ as $\omega_{\alpha} \to \omega$ for any fixed $\lambda \in \R^{2}$, we find that \eqref{au5} implies
\begin{equation} \label{au10}
\Vert \SD ( \omega , \omega_{\alpha} ) \Vert \longrightarrow 0 ,
\end{equation}
as $\omega_{\alpha} \to \omega$. Further, we have
\begin{align}
\Vert \SC_{\nu} ( z ) \Vert^{2} &= \Vert \SC_{\nu} ( z )^{*} \SC_{\nu} ( z ) \Vert = \big\Vert ( \SH_{\nu} - \overline{z} )^{- 1} ( \SH_{\nu} - \CV_{\nu} ) ( \SH_{\nu} - z )^{- 1} \big\Vert   \nonumber \\
&\leq \big\Vert ( \SH_{\nu} - \overline{z} )^{- 1} \big\Vert + \big\Vert ( \SH_{\nu} - \overline{z} )^{- 1} ( z - \CV_{\nu} ) ( \SH_{\nu} - z )^{- 1} \big\Vert  \nonumber \\
&\leq \frac{1}{\vert \im z \vert} + \frac{\vert z \vert + \sup_{x \in \R^{2}} \vert V ( x ) \vert}{\vert \im z \vert^{2}} ,
\end{align}
for $\nu \in \B^{2}$. Since the gradient of $\< x \>$ is bounded and
\begin{align*}
\big[ \< x & \> , ( \SH_{\nu} - z )^{- 1} \big] \\
&= ( \SH_{\nu} - z )^{- 1} \left(
\begin{array}{cc}
a_{\nu}^{*} [ a_{\nu} , \< x \> ] + [ a_{\nu}^{*} , \< x \> ] a_{\nu} & 0 \\
0 & a_{\nu} [ a_{\nu}^{*} , \< x \> ] + [ a_{\nu} , \< x \> ] a_{\nu}^{*}
\end{array}
\right) ( \SH_{\nu} - z )^{- 1} ,
\end{align*}
we find that
\begin{equation}
\big\Vert \big[ \< x \> , ( \SH_{\nu} - z )^{- 1} \big] \big\Vert \leq 2 \big\Vert ( \SH_{\nu}  - z )^{- 1} \big\Vert \Vert [ a_{\nu} , \< x \> ] \Vert \big( \Vert \SC_{\nu} ( z ) \Vert + \Vert \SC_{\nu} ( \overline{z} ) \Vert \big) \leq C ,
\end{equation}
and, analogously,
\begin{equation} \label{au12}
\big\Vert \big[ \< x \> , \SC_{\nu} ( z ) \big] \big\Vert \leq C , \quad \nu \in \B^{2} ,
\end{equation}
with a constant $C>0$ which may depend on $z$ but is independent of $\nu$.
Since ${\rm supp}\,f$ is compact, putting together \eqref{au10}--\eqref{au12}, we find that \eqref{au6} implies
\begin{equation*}
\bigg\Vert ( \SH_{\omega_{\alpha}} - z )^{- 1} \left(
\begin{array}{cc}
\SH_{\omega}^{-} - \SH_{\omega_{\alpha}}^{-} & 0 \\
0 & \SH_{\omega}^{+} - \SH_{\omega_{\alpha}}^{+}
\end{array}
\right) ( \SH_{\omega} - z )^{- 1} f \bigg\Vert_{L^{2} ( \R^{2} ; \C^{2} )} \longrightarrow 0 ,
\end{equation*}
as $\omega_{\alpha} \to \omega$, which combined with \eqref{au9} and \eqref{au1}, yields \eqref{au8}.
\end{proof}

\begin{cor}\sl \label{fau1}
For $\lambda \in \R$, the operator family $\{ \one_{( - \infty , \lambda )} ( \SH_{\omega} ) \}_{\omega \in \B^{2}}$ is weakly measurable, i.e. the functions
\begin{equation} \label{au13}
\B^{2} \ni \omega \longmapsto \big\< \one_{( - \infty , \lambda )} ( \SH_{\omega} ) f , g \big\> \in \C , \quad f , g \in L^{2} ( \R^{2} ; \C^{2} ) ,
\end{equation}
are $\PPP$-measurable, $\< \cdot , \cdot \>$ being the scalar product in $L^{2} ( \R^{2} ; \C^{2} )$.
\end{cor}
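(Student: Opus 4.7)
The plan is to deduce $\PPP$-measurability of the function in \eqref{au13} from the strong continuity of $\omega \mapsto (\SH_\omega - z)^{-1}$ established in Proposition \ref{pau1}, via a standard three-step functional-calculus approximation.

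First, Proposition \ref{pau1} implies that for every fixed $z \in \C \setminus \R$ and every $f, g \in L^2(\R^2;\C^2)$, the scalar function $\omega \mapsto \langle (\SH_\omega - z)^{-1} f, g\rangle$ is continuous on $\B^2$, and in particular Borel measurable. By algebraic operations, $\omega \mapsto \langle P(\SH_\omega) f, g\rangle$ is continuous for every $P$ in the $*$-algebra $\CR$ generated by $\{t \mapsto (t-z)^{-1} : z \in \C \setminus \R\}$, since finite sums and finite products of strongly continuous uniformly bounded operator-valued maps give strongly continuous operator-valued maps.

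Second, I would promote this to all $\varphi \in C_0(\R)$. By Stone--Weierstrass, $\CR$ is uniformly dense in $C_0(\R)$, and the spectral theorem gives $\Vert \varphi(\SH_\omega) \Vert \leq \Vert \varphi\Vert_\infty$, so whenever $P_k \in \CR$ with $\Vert P_k - \varphi\Vert_\infty \to 0$, we have $\Vert P_k(\SH_\omega) - \varphi(\SH_\omega)\Vert \to 0$ uniformly in $\omega \in \B^2$. Consequently $\omega \mapsto \langle \varphi(\SH_\omega) f, g\rangle$ is a uniform limit of continuous functions on $\B^2$ and is therefore continuous itself for every $\varphi \in C_0(\R)$.

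Third, I would approximate $\one_{(-\infty,\lambda)}$ pointwise by a bounded sequence $\{\varphi_n\} \subset C_0(\R)$ with $|\varphi_n| \leq 1$ and $\varphi_n(t) \to \one_{(-\infty,\lambda)}(t)$ for every $t \in \R$. By the bounded convergence theorem applied to the spectral measure $d\langle E_{\SH_\omega}(\cdot) f, g\rangle$, one has
\begin{equation*}
\big\langle \varphi_n(\SH_\omega) f, g\big\rangle \longrightarrow \big\langle \one_{(-\infty,\lambda)}(\SH_\omega) f, g\big\rangle , \quad n \to \infty ,
\end{equation*}
for each $\omega \in \B^2$. Since each left-hand side is continuous on $\B^2$ by the previous step, the function in \eqref{au13} is a pointwise limit of a sequence of continuous functions, hence Borel measurable, hence $\PPP$-measurable, which is the claim.

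There is no genuine obstacle here; the only point requiring mild care is that $\B^2$ is not metrizable, so ``continuous'' has to be interpreted through nets rather than sequences. This causes no trouble because the approximation in the second step is \emph{uniform} in $\omega$ (preserving continuity) and the approximation in the third step produces a \emph{sequential} pointwise limit of Borel functions, which is Borel regardless of metrizability.
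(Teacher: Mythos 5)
Your proof is correct, but it takes a different route from the paper's. The paper also starts from Proposition~\ref{pau1}, which gives continuity (hence $\PPP$-measurability) of $\omega \mapsto \langle(\SH_\omega - z)^{-1}f,g\rangle$ for each fixed $z \in \C\setminus\R$, but then concludes in one line by citing \cite[Proposition 3]{km} (Kirsch--Martinelli), which asserts precisely that weak measurability of the resolvent family $(\SH_\omega - z)^{-1}$ is equivalent to weak measurability of the spectral projections $\one_{(-\infty,\lambda)}(\SH_\omega)$. Your argument is, in effect, a self-contained reproof of that implication: from resolvents to the $*$-algebra they generate (correctly using the uniform bound $\|(\SH_\omega - z)^{-1}\| \leq |\im z|^{-1}$ so that strong continuity survives products), from that algebra to $C_0(\R)$ via Stone--Weierstrass together with the norm estimate $\|\varphi(\SH_\omega)\| \leq \|\varphi\|_\infty$, and finally from $C_0(\R)$ to the indicator $\one_{(-\infty,\lambda)}$ by a bounded pointwise approximation and dominated convergence for the (finite total variation) complex spectral measure $\langle \one_{(\cdot)}(\SH_\omega)f,g\rangle$. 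All three steps are sound, and you are right that the non-metrizability of $\B^2$ poses no problem, since uniform limits preserve continuity and countable pointwise limits preserve Borel measurability independently of the topology. What the paper gains is brevity by delegating the functional-calculus step to a standard reference; what you gain is independence from that reference.
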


\begin{proof}
Let $z \in \C \setminus \R$. It follows from Proposition \ref{pau1}, that the functions
\begin{equation*}
\B^{2} \ni \omega \longmapsto \big\< ( \SH_{\omega}  - z )^{- 1} f , g \big\> \in \C , \quad f , g \in L^{2} ( \R^{2} ; \C^{2} ) ,
\end{equation*}
are continuous, and hence $\PPP$-measurable. By \cite[Proposition 3]{km}, this is equivalent to the measurability of the functions defined in \eqref{au13}.
\end{proof}

Note that \eqref{j28} remains unchanged but \eqref{j30} should be replaced by
\begin{equation*}
\CU_{\xi} \SA_{\omega} \CU_{\xi}^{*} = \SA_{\CT_{\xi} \omega} + \nabla \Phi_{\omega , \xi} ,
\end{equation*}
where
\begin{equation*}
\Phi_{\omega , \xi} ( x ) = x \cdot \SA_{\omega} ( - \xi) , \quad x \in \R^{2} , \quad \xi \in \R^{2} , \quad \omega \in \R^{2} .
\end{equation*}
Hence, \eqref{j10} should be replaced by
\begin{equation} \label{7a}
\CU_{\xi} \SH_{\omega} \CU_{\xi}^{*} = e^{-i \Phi_{\xi , \omega}} \SH_{\CT_{\xi} \omega} e^{i \Phi_{\xi , \omega}} , \quad \omega \in \B^{2} , \quad \xi \in \R^{2} .
\end{equation}
Thus the operator family $\{\SH_{\omega} \}_{\omega \in \B^{2}}$ is not any more ergodic in the classical sense (see \eqref{j10}), but is ergodic up to an $\omega$-dependent gauge transformation. However, relation \eqref{7a} defines a reasonable generalization of the $\R^{2}$-ergodicity, allowing us to prove Theorem \ref{th2} below, thus extending Theorem \ref{th1} to the case where we just assume almost periodicity of $b$ and $V$.

\begin{thm}\sl \label{th2}
Assume that $b \in \WAP ( \R^{2} ; \R )$ and $V \in \CAP ( \R^{2} ; M_{2} )$.
Then,

\noindent
{\rm (i)} There exist closed subsets $\Sigma$, $\Sigma_{\rm ac}$, $\Sigma_{\rm sc}$, and $\Sigma_{\rm pp}$ of $\R$ such that  $\PPP$-almost surely
\begin{equation*}
\sigma ( \SH_{\omega} ) = \Sigma , \qquad \sigma_{\rm ac} ( \SH_{\omega} ) = \Sigma_{\rm ac} ,
\qquad \sigma_{\rm sc} ( \SH_{\omega} ) = \Sigma_{\rm sc} , \qquad \sigma_{\rm pp} ( \SH_{\omega} ) = \Sigma_{\rm pp} .
\end{equation*}

\noindent
{\rm (ii)} Moreover, $\PPP$-almost surely,
\begin{equation*}
\sigma_{\rm disc} ( \SH_{\omega} ) = \emptyset .
\end{equation*}

\noindent
{\rm (iii)} Any $E \in \R$ is $\PPP$-almost surely not an eigenvalue of $\SH_{\omega}$ of finite multiplicity.
\end{thm}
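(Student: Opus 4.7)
The plan is to mirror the proof of Theorem~\ref{th1}, with the classical covariance \eqref{j10} replaced by its gauge--twisted version \eqref{7a}. The decisive observation is that $e^{i\Phi_{\xi,\omega}}$ is multiplication by a unimodular function, so its conjugation preserves the spectrum of $\SH_\omega$, its ${\rm ac}/{\rm sc}/{\rm pp}$ decomposition, and the rank of every spectral projection; moreover, on the diagonal of the integral kernel of any spectral projection, this phase together with the phase $e^{i\frac{b_0}{2}(\xi_1x_2-x_1\xi_2)}$ arising in $\CU_\xi$ will cancel, leaving a pure translation in the $x$ variable.

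For part (i), I would combine the weak measurability of $\omega\mapsto\one_{(-\infty,\lambda)}(\SH_\omega)$ provided by Corollary~\ref{fau1} with the $\R^2$-ergodicity of $\{\CT_\xi\}$ from Lemma~\ref{l1}, using \eqref{7a} to get $\sigma_\sharp(\SH_{\CT_\xi\omega})=\sigma_\sharp(\SH_\omega)$ for $\sharp\in\{\,\cdot\,,{\rm ac},{\rm sc},{\rm pp}\}$. The standard scheme of \cite{km,pf} then produces closed sets $\Sigma,\Sigma_{\rm ac},\Sigma_{\rm sc},\Sigma_{\rm pp}\subset\R$ with the required almost sure constancy.

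The heart of the argument is (iii). Fix $E\in\R$ and set $m(\omega):=\dim\Ker(\SH_\omega-E)$. Since dimensions of eigenspaces are gauge--invariant, $m\circ\CT_\xi=m$ on $\B^{2}$, and $m$ is $\CF$-measurable by Corollary~\ref{fau1}; hence by ergodicity $m\equiv m_0$ $\PPP$-almost surely for some $m_0\in\{0,1,\ldots,\infty\}$. Assuming for contradiction $1\le m_0<\infty$, the projection $P_\omega:=\one_{\{E\}}(\SH_\omega)$ is a.s. of finite rank, and elliptic regularity makes its integral kernel smooth; its diagonal $p_\omega(x):=P_\omega(x,x)\ge 0$ is therefore well defined and satisfies $\int_{\R^{2}}p_\omega(x)\,dx=m_0$. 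Writing \eqref{7a} as $\SH_{\CT_\xi\omega}=W_{\xi,\omega}\SH_\omega W_{\xi,\omega}^{*}$ with $W_{\xi,\omega}:=e^{i\Phi_{\xi,\omega}}\CU_\xi$ a unitary of the form ``unimodular multiplication followed by translation by $\xi$'', and taking the diagonal of $P_{\CT_\xi\omega}=W_{\xi,\omega}P_\omega W_{\xi,\omega}^{*}$, all unimodular phases cancel, leaving
\begin{equation*}
p_{\CT_\xi\omega}(x)=p_\omega(x-\xi),\qquad x\in\R^{2},\ \xi\in\R^{2},\ \omega\in\B^{2}.
\end{equation*}
Taking $\PPP$-expectation, $g(x):=\E[p_\omega(x)]$ satisfies $g(x)=g(x-\xi)$ for every $\xi\in\R^{2}$, so $g\equiv c$ for some constant $c\ge 0$. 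Fubini then gives $m_0=\int_{\R^{2}}c\,dx$, which is $0$ if $c=0$ (forcing $p_\omega\equiv 0$, i.e.\ $P_\omega=0$ a.s.) and $\infty$ if $c>0$; both options contradict $1\le m_0<\infty$. Hence $m_0=0$, which is (iii).

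Finally, (ii) follows from (i) and (iii): on the full-measure set $\{\sigma(\SH_\omega)=\Sigma\}$, any $E\in\sigma_{\rm disc}(\SH_\omega)$ is an isolated point of the closed set $\Sigma\subset\R$, and such points form a countable set $\{E_n\}_{n\in\N}$; intersecting the countably many full-measure sets provided by (iii) applied to each $E_n$ yields $\sigma_{\rm disc}(\SH_\omega)=\emptyset$ $\PPP$-almost surely. The main obstacle I anticipate is purely organisational: verifying joint $(\omega,x)$-measurability of $p_\omega(x)$ so as to legitimately apply Fubini, and making precise the cancellation of the $\omega$-dependent phases on the diagonal in \eqref{7a}. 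Once the identity $p_{\CT_\xi\omega}(x)=p_\omega(x-\xi)$ is in hand, the rest is a routine adaptation of the Pastur--Kirsch--Martinelli ergodic scheme.
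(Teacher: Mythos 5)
Your proof is correct but substitutes a genuinely different argument for the crucial part (iii). Part (i) is essentially identical to the paper's: weak measurability of $\one_{(-\infty,\lambda)}(\SH_\omega)$ from Corollary~\ref{fau1}, ergodicity from Lemma~\ref{l1}, and the twisted covariance \eqref{7a} fed into the standard Kirsch--Martinelli machinery. The real divergence is in the ``$0$ or $\infty$'' dichotomy that underlies (iii). The paper packages this as Lemma~\ref{lau1}: a weakly measurable, covariantly twisted family of projections $P_\omega$ has $\E(\Tr P_\omega)$ almost surely constant, and one rules out $0 < n < \infty$ by choosing a compactly supported test function $\phi$ with $\E(\<P_\omega\phi,\phi\>) > 0$, noting that $\{\CU_\xi^{*} e^{-i\Phi_{\xi,\omega}}\phi\}_{\xi\in\Z^{2}}$ is orthonormal (disjoint supports), and lower-bounding $\Tr P_\omega$ by the sum over $\xi$ of the matrix coefficients, which by measure preservation equals $\sum_\xi \E(\<P_\omega\phi,\phi\>) = \infty$. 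This works at the operator level, needs no pointwise kernel, and hence no regularity of the coefficients beyond what defines $\SH_\omega$. Your alternative instead pulls out the diagonal of the integral kernel, observes that the unimodular phases from $\CU_\xi$ and $e^{i\Phi_{\xi,\omega}}$ cancel on $x=y$ so that $p_{\CT_\xi\omega}(x)=p_\omega(x-\xi)$, takes expectation, uses translation invariance plus Fubini, and reaches the same dichotomy. This is the familiar density-of-states style argument and is conceptually clean, but it buys less robustness: you need $p_\omega(x)$ to be a well-defined, jointly $(\omega,x)$-measurable, integrable function. In the present setting ($\SA_\omega\in C^{1}$, $\CB_\omega$ and $\CV_\omega$ continuous) elliptic $L^p$ regularity makes the $L^2$-eigenfunctions locally H\"older continuous, so the diagonal exists, and joint measurability can be arranged; but ``elliptic regularity makes the kernel smooth'' is overstated, and these verifications are precisely the ``purely organisational'' obstacles you flag, whereas the paper's Lemma~\ref{lau1} sidesteps them entirely. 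Your deduction of (ii) from (i) and (iii) via the countably many isolated points of $\Sigma$ is a slightly different packaging of what the paper gets from Lemma~\ref{lau1} by following Kirsch's scheme, and it is correct.
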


\noindent
In the proof of the theorem we will need Lemma \ref{lau1} below whose first (resp. second) part is very close to Proposition 5 (resp.  Proposition 6) of \cite[Chapter 4]{k}.

\begin{lem}\sl \label{lau1}
Let $\{ P_{\omega} \}_{\omega \in \B^{2}}$ be a weakly measurable family of orthogonal projections acting in $L^{2} ( \R^{2} ; \C^{2} )$, which satisfies
\begin{equation} \label{au20}
\CU_{\xi} P_{\omega} \CU_{\xi}^{*} = e^{- i \Phi_{\xi , \omega}} P_{\CT_{\xi} \omega} e^{i \Phi_{\xi , \omega}} , \quad \omega \in \B^{2} , \quad \xi \in \R^{2} .
\end{equation}
Then,

\noindent
{\rm (i)} The function
\begin{equation} \label{au20a}
\B^{2} \ni \omega \longmapsto \Tr P_{\omega} \in \Z_{+} \cup \{ \infty \} ,
\end{equation}
is almost surely constant.

\noindent
{\rm (ii)} Either $\Tr P_{\omega} = 0$ almost surely or $\Tr P_{\omega} = \infty$ almost surely.
\end{lem}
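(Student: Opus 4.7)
I will prove (i) by applying $\R^2$-ergodicity directly to the scalar function $\omega \mapsto \Tr P_\omega$, and (ii) by analyzing the expectation of the local trace $\Tr(\one_B P_\omega \one_B)$ as a set function of the bounded Borel set $B \subset \R^2$. For (i), I would first observe that weak measurability of $\{P_\omega\}$, combined with the expansion $\Tr P_\omega = \sum_n \< P_\omega e_n , e_n \>$ over any fixed orthonormal basis $\{e_n\}$ of $L^2(\R^2;\C^2)$, gives $\CF$-measurability of $\omega \mapsto \Tr P_\omega \in \Z_+ \cup \{\infty\}$. Since $\CU_\xi$ and the multiplication operators $e^{\pm i \Phi_{\xi,\omega}}$ are unitary, cyclicity of the trace applied to \eqref{au20} yields $\Tr P_\omega = \Tr P_{\CT_\xi \omega}$ for every $\xi \in \R^2$, so this function is $\{\CT_\xi\}_{\xi \in \R^2}$-invariant. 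Lemma \ref{l1} then forces it to be $\PPP$-almost surely constant.

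For part (ii), I would introduce, for each bounded Borel $B \subset \R^2$, the positive measurable functional $T_B(\omega) := \Tr(\one_B P_\omega \one_B) \in [0,\infty]$. A direct computation from the explicit form of $\CU_\xi$ yields $\CU_\xi \one_B \CU_\xi^* = \one_{B+\xi}$ (the gauge phase in $\CU_\xi$ cancels pointwise with that in $\CU_\xi^*$). Combining this with \eqref{au20}, and using that $e^{\pm i \Phi_{\xi,\omega}}$, being multiplication operators, commute with $\one_{B+\xi}$ and cancel under cyclicity, one obtains the covariance
\[
T_B(\omega) = T_{B+\xi}(\CT_\xi \omega) , \quad \xi \in \R^2 , \quad \omega \in \B^2 .
\]
Integrating over $\B^2$ and invoking the invariance of $\PPP$ under $\CT_\xi$, the set function $B \mapsto \E(T_B)$ on bounded Borel sets is translation invariant; it is also positive and $\sigma$-additive (using $\one_{B_1 \cup B_2} = \one_{B_1} + \one_{B_2}$ for disjoint $B_1 , B_2$ and orthogonality). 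By uniqueness of the Haar measure on $\R^2$, there exists $c \in [0,\infty]$ such that $\E(T_B) = c \vert B \vert$ for every bounded Borel $B$.

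To conclude, I would run a dichotomy on $c$ using any countable exhaustion $B_n \nearrow \R^2$ by bounded Borel sets. If $c > 0$, then $\E(T_{B_n}) \to \infty$, while $T_{B_n}(\omega) \nearrow \Tr P_\omega$ pointwise by monotone convergence applied to the spectral decomposition of $P_\omega$; hence $\E(\Tr P_\omega) = \infty$, and combined with part (i) this forces $\Tr P_\omega = \infty$ $\PPP$-almost surely. If $c = 0$, then for each $n$ one has $T_{B_n}(\omega) = 0$ $\PPP$-almost surely, hence $\one_{B_n} P_\omega = 0$ outside a $\PPP$-null set; intersecting over $n$ and using that $\one_{B_n}$ converges strongly to the identity on $L^2$ yields $P_\omega = 0$ $\PPP$-almost surely. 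The only real bookkeeping subtlety is tracking the $\omega$-dependent gauge factors $e^{\pm i \Phi_{\xi,\omega}}$ through the trace manipulations, but since they are multiplication operators commuting with $\one_B$, they always disappear under cyclicity; everything else is standard ergodic-theoretic bookkeeping.
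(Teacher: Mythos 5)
Your proof of part (i) matches the paper's argument exactly: both reduce \eqref{au20} to the scalar invariance $\Tr P_{\omega} = \Tr P_{\CT_{\xi}\omega}$ and then invoke ergodicity of $\{\CT_{\xi}\}_{\xi \in \R^{2}}$. Your proof of part (ii), however, takes a genuinely different route. The paper argues by contradiction: assuming $\Tr P_{\omega} = n$ with $0 < n < \infty$ a.s., it selects a unit vector $\phi$ supported in a unit cube with $\E(\<P_{\omega}\phi,\phi\>) > 0$, observes that $\{\CU_{\xi}^{*}e^{-i\Phi_{\xi,\omega}}\phi\}_{\xi \in \Z^{2}}$ is orthonormal (disjoint supports), and bounds $n = \E(\Tr P_{\omega})$ from below by the divergent series $\sum_{\xi\in\Z^{2}}\E(\<P_{\CT_{\xi}\omega}\phi,\phi\>) = \sum_{\xi\in\Z^{2}}\E(\<P_{\omega}\phi,\phi\>)$. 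You instead introduce the local trace $T_{B}(\omega) = \Tr(\one_{B}P_{\omega}\one_{B})$, derive the covariance $T_{B}(\omega) = T_{B+\xi}(\CT_{\xi}\omega)$, and identify $B\mapsto \E(T_{B})$ as a translation-invariant Borel measure, hence a multiple $c\,|B|$ of Lebesgue measure, then split on $c = 0$ versus $c > 0$. Both arguments are correct; yours is the ``integrated density of states'' viewpoint and is arguably cleaner conceptually, at the modest cost of a side remark that one must check local finiteness of $\E(T_{B})$ before invoking Haar uniqueness (if $\E(T_{Q}) = \infty$ for a unit cube $Q$, set $c = \infty$ and the $c > 0$ branch still applies; if $\E(T_{Q}) < \infty$, then by translation invariance and subadditivity $\E(T_{B}) < \infty$ for every bounded $B$, so Haar uniqueness does apply). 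The paper's argument avoids this by working directly with an orthonormal system indexed by $\Z^{2}$; in fact one could also tile $\R^{2}$ by $\Z^{2}$-translates of $Q$ in your argument and bypass Haar uniqueness altogether, which would bring the two proofs closer together.
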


\begin{proof} (i) By the weak measurability of $P_{\omega}$, the function defined in \eqref{au20a} is $\PPP$-measurable. By \eqref{au20}, we have
\begin{equation*}
\Tr P_{\omega} = \Tr P_{\CT_{\xi} \omega} , \quad \xi \in \R^{2} , \quad \omega \in \B^{2} ,
\end{equation*}
i.e. $\Tr P_{\omega}$ is invariant under the action of the ergodic group $\{ \CT_{\xi} \}_{\xi \in \R^{2}}$. By \cite[Proposition 9.1]{cfks}, $\Tr P_{\omega}$ is almost surely constant.

(ii) By Part (i), there exists $n \in \Z_{+} \cup \{ \infty \}$ such that almost surely
\begin{equation} \label{au22}
\Tr P_{\omega} = \E ( \Tr P_{\omega} ) = n .
\end{equation}
Hence, we must just exclude the possibility that $n \in \N$ in \eqref{au22}. Assume that \eqref{au22} holds true with $n \in \N$. Since $n > 0$, we find that for any total set $X \subset L^{2} ( \R^{2} ; \C^{2} )$ there exists an element $\phi \in X$ such that
\begin{equation} \label{au23}
\E ( \< P_{\omega} \phi , \phi \> ) > 0 .
\end{equation}
Define
\begin{equation*}
X_{0} : = \big\{ \phi \in L^{2} ( \R^{2} ; \C^{2} ) \, | \, \ \supp \phi \subset ( - 1 / 2 , 1 / 2 )^{2} + x \text{ for some } x \in \R^{2} \big\} ,
\end{equation*}
where $\< \cdot ,\cdot \>$ is the scalar product in $L^{2} ( \R^{2} ; \C^{2} )$. Evidently, $X_{0}$ is total in $L^{2} ( \R^{2} ; \C^{2} )$. Pick $\phi \in X_{0}$ with $\< \phi , \phi \> = 1$ such that \eqref{au23} holds true. Note that the system $\{ \CU^{*}_{\xi} e^{- i \Phi_{\xi,\omega}} \phi \}_{\xi \in \Z^{2}}$ is orthonormal in $L^{2} ( \R^{2} ; \C^{2} )$. Therefore,
\begin{equation} \label{au24}
\Tr P_{\omega} = \E ( \Tr P_{\omega} ) \geq \sum_{\xi \in \Z^{2}} \E \big( \big\< P_{\omega} \CU^{*}_{\xi} e^{- i \Phi_{\xi , \omega}} \phi , \CU^{*}_{\xi} e^{- i \Phi_{\xi , \omega}} , \phi \big\> \big) = \sum_{\xi \in \Z^{2}} \E ( \< P_{\CT_{\xi} \omega} \phi , \phi \> ) .
\end{equation}
Since the transformations $\CT_{\xi}$  are measure preserving, we have
\begin{equation} \label{au25}
\E ( \< P_{\CT_{\xi} \omega } \phi , \phi \> ) = \E ( \< P_{\omega} \phi , \phi \> ) , \quad \xi \in \R^{2} .
\end{equation}
By \eqref{au23} and \eqref{au25}, we find that the rightmost term in \eqref{au24} is infinite which contradicts our assumption that $n$ in \eqref{au22} is finite.
\end{proof}

\begin{proof}[Proof of Theorem \ref{th2}]
Fix $\lambda \in \R$. Then \eqref{7a} implies that equality \eqref{au20} holds true for the family $P_{\omega} = \one_{( - \infty , \lambda )} ( \SH_{\omega} )$, $\omega \in \B^{2}$, which is weakly measurable by Corollary \ref{fau1}. Thus we find that $\Tr \one_{( - \infty , \lambda)} ( \SH_{\omega} )$ is almost surely constant, which implies that
$\sigma ( \SH_{\omega} )$, $\sigma_{\rm ess} ( \SH_{\omega} )$ and $\sigma_{\rm disc} ( \SH_{\omega} )$ are almost surely constant.

Let $\SH_{\omega}^{\rm ac}$, $\SH_{\omega}^{\rm sc}$ and $\SH_{\omega}^{\rm pp}$ be the absolute continuous, the singular continuous and pure point part of $\SH_{\omega}$ respectively. By \eqref{7a}, equality \eqref{au20} holds true for $P_{\omega} = \one_{( - \infty , \lambda )} ( \SH_{\omega}^{\sharp} )$ with $\sharp = {\rm ac , sc , pp}$. By \cite[Section 3]{km}, Corollary \ref{fau1} implies that the family of operators $P_{\omega} = \one_{( - \infty , \lambda )} ( \SH_{\omega}^{\sharp} )$
is weakly measurable, and, hence, by Lemma \ref{lau1} $\Tr \one_{( - \infty , \lambda )} ( \SH_{\omega}^{\sharp})$, is almost surely constant, which implies, as above, that $\sigma_{\sharp} ( \SH_{\omega} )$, $\sharp = {\rm ac , sc , pp}$, is almost surely constant. This concludes the proof of Theorem \ref{th2} (i).

The remaining two parts of Theorem \ref{th2} now follow from Lemma \ref{lau1} just as Theorem 3 and Corollary 1  in \cite[Section 4]{k} follow from Propositions 5 and 6 there.
\end{proof}

\begin{xrem}
(i) A result closely related to Theorem \ref{th2} is contained in \cite[Theorem 2.1]{ueki}. It is possible that our theorem could be deduced from that result which however is fairly abstract, so we preferred to provide a relatively simple and self-contained independent proof.

(ii) Theorem \ref{th1}, Corollary \ref{f1a} and Theorem \ref{th2} admit straightforward but quite technical generalizations to the case $d \geq 3$. We omit them since we don't believe that they would add a new and deeper insight to the problems considered.

(iii) For the moment, we do not know whether an analogue of Corollary \ref{f1a} holds true under the hypotheses of Theorem \ref{th2}.
\end{xrem}

\section{Zero modes of $H$} \label{s4}

In this section we assume $V = 0$ and write $H$ instead of $H ( A , 0)$. We are interested in the zero modes of the positive operator $H$, i.e. in the closed subspace $\Ker H$ of the Hilbert space $L^{2} ( \R^{2} ; \C^{2} )$. From \eqref{j2}, we have
\begin{equation} \label{a}
\Ker H = \big\{ {\bf u} = ( u_{1} , u_{2} ) \, | \, \ u_{1} \in \Ker a , \ u_{2} \in \Ker a^{*} \big\} ,
\end{equation}
and, hence,
\begin{equation} \label{aa}
\dim \Ker H = \dim \Ker a + \dim \Ker a^{*} .
\end{equation}
Moreover, \eqref{j3} yields
\begin{gather}
\Ker a = \Big\{ u \in L^{2} ( \R^{2} ) \, | \, \ u = f e^{-\varphi} , \ \frac{\partial f}{\partial \overline{z}} = 0 \Big\} ,  \label{b} \\
\Ker a^{*} = \Big\{u \in L^{2} ( \R^{2} ) \, | \, \ u = f e^{\varphi} , \ \frac{\partial f}{\partial z} = 0 \Big\} .   \label{c}
\end{gather}

\subsection{Classical results}  \label{ss41}
Let us mention two classes of magnetic fields $b$ for which $\Ker H$ is well described.

\subsubsection{Rapidly decaying magnetic fields} \label{412}

Assume that $b \in C^{\infty} ( \R^{2} ; \R )$ satisfies
\begin{equation*}
\vert b ( x ) \vert \leq C \<x\>^{- 2 - \varepsilon} , \quad x \in \R^{2} ,
\end{equation*}
with $C , \varepsilon > 0$. Then the function
\begin{equation*}
\varphi ( x ) = \frac{1}{2 \pi} \int_{\R^{2}} \ln \vert x - y \vert b ( y ) \, d y , \quad x \in \R^{2} ,
\end{equation*}
is well defined and is a solution of \eqref{1}. Moreover, we have
\begin{equation*}
\varphi ( x ) = \Phi \ln \vert x \vert + o ( 1 ), \quad \vert x \vert \to \infty ,
\end{equation*}
where $\Phi : = \frac{1}{2 \pi} \int_{\R^{2}} b ( y ) \, d y$ is {\em the flux of the magnetic field}. As a result,
\begin{equation} \label{au31a}
\dim \Ker H = \lfloor \vert \Phi \vert \rfloor ,
\end{equation}
where $\lfloor t \rfloor$ is the greatest integer less than $t > 0$, and $\lfloor 0 \rfloor = 0$ (see the original work \cite{ac} or \cite[Theorem 6.5]{cfks}). Moreover, since $\sigma ( H )$ is purely essential and coincides with $[ 0 , \infty )$ (see e.g. \cite[Theorem 6.1]{cfks}), the zero eigenvalue of $H$ is the endpoint of its essential spectrum.

\begin{xrem}
Relation \eqref{au31a}, known as {\em the Aharonov-Casher theorem}, has been generalized in various directions during the last two decades \cite{ev, gs, rs, e}.
\end{xrem}

\subsubsection{Periodic magnetic fields} \label{413}

Suppose now that
\begin{equation*}
b ( x ) = \sum_{k \in \Z^{2}} b_{k} e^{i k \cdot x} , \quad x \in \R^{2} ,
\end{equation*}
with $\{ b_{k} \}_{k \in \Z^{2}} \in \ell^{1} ( \Z^{2} )$ and $\overline{b_{k}} = b_{- k}$ for $k \in \Z^{2}$. In particular, $b \in C ( \T^{2} ; \R )$. We can choose the solution $\varphi$ in \eqref{1} as $\varphi = \varphi_{0} + \widetilde{\varphi}$ where
\begin{equation*}
\varphi_{0} ( x ) : =  \frac{b_{0} \vert x \vert^{2}}{4} ,
\qquad
\widetilde{\varphi} ( x ) : = -\sum_{0 \neq k \in \Z^{2}} \frac{b_{k}}{\vert k \vert^{2}} e^{i k \cdot x}, \quad x \in \R^{2} .
\end{equation*}
Then $\widetilde{\varphi}$ is real, bounded and we have
\begin{equation} \label{au32}
\varphi ( x ) : =  \frac{b_{0} \vert x \vert^{2}}{4} + \CO ( 1 ) , \quad x \in \R^{2} .
\end{equation}
Hence, \eqref{b}--\eqref{c} easily imply
\begin{equation*}
\dim \Ker a =\left\{ \begin{aligned}
&\infty &&\text{if } b_{0} > 0 , \\
&0 &&\text{if } b_{0} \leq 0,
\end{aligned} \right.
\qquad
{\rm dim\,Ker}\,a^{*} =\left\{ \begin{aligned}
&\infty &&\text{if } b_{0} < 0 , \\
&0 &&\text{if } b_{0} \geq  0 .
\end{aligned} \right.
\end{equation*}
By \eqref{aa}, we deduce
\begin{equation} \label{au31}
\dim \Ker H =\left\{ \begin{aligned}
&\infty &&\text{if } b_{0} \neq 0 , \\
&0 &&\text{if } b_{0} = 0 .
\end{aligned} \right.
\end{equation}
If $b_0 = 0$, it is shown in \cite{bisu97} that $\sigma(H)$ is purely absolutely continuous. If $b_{0} \neq 0$, then the zero eigenvalue is an isolated point of $\sigma ( H )$. This fact was noticed in \cite{dno} without proof, and later was proved in \cite[Example 6]{b}. An explicit bound for the spectral gap adjoining the origin is contained in Proposition \ref{pau3} below which concerns a considerably more general situation.

\pagebreak

\subsection{Almost periodic magnetic fields} \label{ss42}

\subsubsection{Almost periodic fields close to the periodic ones}  \label{421}

Assume that $b \in \WAP ( \R^{2} ; \R )$ and
\begin{equation} \label{3}
\sum_{\lambda \in J_{0} ( b )} \vert b_{\lambda} \vert \vert \lambda \vert^{- 2} < \infty .
\end{equation}
This class of magnetic fields contains the periodic ones and satisfies the assumptions of Theorem \ref{th1}. Similarly to the periodic case, we can choose the solution $\varphi$ of \eqref{1} as $\varphi = \varphi_{0} + \widetilde{\varphi}$ where
\begin{equation*}
\varphi_{0} ( x ) : =  \frac{b_{0} \vert x \vert^{2}}{4} , \qquad \widetilde{\varphi} ( x ) : = -\sum_{\lambda \in J_{0} ( b )} b_{\lambda} \vert \lambda \vert^{- 2} e_{\lambda} ( x ) , \quad x \in \R^{2} .
\end{equation*}
Again, $\widetilde{\varphi}$ is bounded and \eqref{au32} and, hence, \eqref{au31} hold true. As a matter of fact, this class of almost periodic $b$ is contained in a larger class of magnetic fields, considered in the following

\begin{proposition}[{\cite[Proposition 1.2]{r5}}]\sl \label{pau3}
Let $b =  b_{0} + \widetilde{b}$ where $b_{0} \in \R$ and $\widetilde{b}$ is such that there exists a solution $\widetilde{\varphi} \in C_{\rm b}^{2} ( \R^{2} ; \R )$ of the Poisson equation
\begin{equation} \label{1aa}
\Delta \widetilde{\varphi} ( x ) = \widetilde{b} ( x ) ,  \quad x \in \R^{2} .
\end{equation}
Then, \eqref{au31} holds true. If moreover $b_{0} \neq 0$, the zero eigenvalue is isolated in the spectrum of $H$. More precisely,
\begin{equation} \label{au34}
\dist \big( 0 , \sigma ( H ) \setminus \{ 0 \} \big) \geq 2 \vert b_{0} \vert e^{- 2 \osc \widetilde{\varphi}} ,
\end{equation}
where
\begin{equation*}
\osc \widetilde{\varphi} : = \sup_{x \in \R^{2}} \widetilde{\varphi} ( x ) - \inf_{x \in \R^{2}} \widetilde{\varphi} ( x ) .
\end{equation*}
\end{proposition}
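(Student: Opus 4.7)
The plan is to fix the gauge $\varphi := \varphi_{0} + \widetilde{\varphi}$ with $\varphi_{0}(x) := b_{0}\vert x\vert^{2}/4$, which satisfies $\Delta\varphi = b$. The magnetic potential $A = (-\partial_{x_{2}}\varphi,\partial_{x_{1}}\varphi)$ then generates $b$ with $\dive A = 0$, and formulas \eqref{j3}, \eqref{b}--\eqref{c} are available. In particular, $\Ker a$ consists of $fe^{-\varphi}\in L^{2}(\R^{2})$ with $f$ entire holomorphic, while $\Ker a^{*}$ consists of $fe^{\varphi}\in L^{2}(\R^{2})$ with $f$ entire antiholomorphic.

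For \eqref{au31} I would exploit the boundedness of $\widetilde{\varphi}$: the weights $e^{\pm\varphi}$ differ from $e^{\pm\varphi_{0}}$ only by factors that are bounded above and bounded away from zero. If $b_{0}>0$, then $z^{n}e^{-\varphi}\in\Ker a$ for every $n\in\Z_{+}$ (Gaussian decay of $e^{-\varphi_{0}}$ dominates any polynomial), so $\dim\Ker a=\infty$. Conversely, an antiholomorphic entire $f=g(\overline{z})$ with $fe^{\varphi}\in L^{2}$ must satisfy $\int_{\R^{2}}\vert g(w)\vert^{2}e^{b_{0}\vert w\vert^{2}/2}\,dx<\infty$, which upon expanding $g=\sum c_{n}w^{n}$ and integrating in polar coordinates forces all $c_{n}=0$; hence $\Ker a^{*}=\{0\}$. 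The case $b_{0}<0$ is symmetric with the roles of $a$ and $a^{*}$ reversed. If $b_{0}=0$, the weights $e^{\pm\varphi}$ are comparable to $1$, so both kernels reduce to entire holomorphic or antiholomorphic functions in $L^{2}(\R^{2})$, which vanish by the mean value inequality. Combining with \eqref{aa} yields \eqref{au31}.

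For the gap bound, assume $b_{0}>0$ (the opposite case is analogous). By the supersymmetric form \eqref{j2}, the nonzero spectra of $a^{*}a$ and $aa^{*}$ coincide, and since $\Ker a^{*}=\{0\}$ we have $\dist(0,\sigma(H)\setminus\{0\}) = \inf\sigma(aa^{*}) = \inf_{v\in\Dom a^{*},\,v\ne 0}\Vert a^{*}v\Vert^{2}/\Vert v\Vert^{2}$. For any $v\in\Dom a^{*}$, set $w := e^{-\varphi}v$; then by \eqref{j3}, $a^{*}v = -2ie^{\varphi}\partial_{z}w$, and therefore
\begin{equation*}
\frac{\Vert a^{*}v\Vert^{2}}{\Vert v\Vert^{2}} = \frac{4\int_{\R^{2}}\vert\partial_{z}w\vert^{2} e^{2\varphi}\,dx}{\int_{\R^{2}}\vert w\vert^{2} e^{2\varphi}\,dx}.
\end{equation*}
Estimating $e^{2\widetilde{\varphi}}\geq e^{2\inf\widetilde{\varphi}}$ in the numerator and $e^{2\widetilde{\varphi}}\leq e^{2\sup\widetilde{\varphi}}$ in the denominator, this quotient is bounded below by
\begin{equation*}
e^{-2\osc\widetilde{\varphi}}\cdot\frac{4\int_{\R^{2}}\vert\partial_{z}w\vert^{2} e^{b_{0}\vert x\vert^{2}/2}\,dx}{\int_{\R^{2}}\vert w\vert^{2} e^{b_{0}\vert x\vert^{2}/2}\,dx},
\end{equation*}
and the remaining factor is precisely $\Vert a_{0}^{*}v_{0}\Vert^{2}/\Vert v_{0}\Vert^{2}$ for $v_{0}:=e^{\varphi_{0}}w$ and $a_{0}:=a(A_{0})$. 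Since $A_{0}$ is the constant-field Landau potential with $\Ker a_{0}^{*}=\{0\}$, the standard Landau computation gives $\inf\sigma(a_{0}a_{0}^{*}) = 2b_{0}$ (the first Landau level), so this second factor is at least $2b_{0}$, yielding \eqref{au34}.

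The main obstacle is to recognise that the right comparison object for the gap is the pure Landau Hamiltonian and not $b$ pointwise. The naive identity $\Vert a^{*}v\Vert^{2} = \Vert av\Vert^{2} + 2\int b\vert v\vert^{2}\,dx$ only produces the lower bound $2(b_{0}-\Vert\widetilde{b}\Vert_{\infty})$, which need not be positive and is insensitive to the finer geometry encoded in $\widetilde{\varphi}$. The substitution $v=e^{\varphi}w$ absorbs the full magnetic effect into the weight $e^{2\varphi}$; because $\widetilde{\varphi}$ then appears symmetrically in numerator and denominator, it can be traded for the sharp factor $e^{-2\osc\widetilde{\varphi}}$.
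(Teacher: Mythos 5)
The paper does not prove Proposition \ref{pau3}; it imports the statement from \cite[Proposition 1.2]{r5}. Your argument is correct and is the standard one for this bound: the formula \eqref{au31} follows from \eqref{b}--\eqref{c} because $\widetilde{\varphi}$ is bounded (so $e^{\pm\varphi}$ is comparable to $e^{\pm\varphi_{0}}$), and the gap estimate follows from the ground-state substitution $v=e^{\varphi}w$, which reduces $\|a^{*}v\|^{2}/\|v\|^{2}$ to a weighted Rayleigh quotient where the $e^{2\widetilde{\varphi}}$ factor can be traded for $e^{-2\osc\widetilde{\varphi}}$ against the constant-field Landau gap $2|b_{0}|$. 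Two small points worth making explicit in a write-up: (a) the supersymmetry identity $\sigma(a^{*}a)\setminus\{0\}=\sigma(aa^{*})\setminus\{0\}$ together with $\Ker a^{*}=\{0\}$ gives $\sigma(H)\setminus\{0\}\subseteq\sigma(aa^{*})$, so it suffices to bound $\inf\sigma(aa^{*})$ from below; and (b) the comparison of Rayleigh quotients should be carried out on a common form core (e.g.\ $C_{0}^{\infty}$, which is mapped to itself by multiplication by the smooth bounded function $e^{\pm\widetilde{\varphi}}$), so that the lower bound $\inf\sigma(a_{0}a_{0}^{*})=2|b_{0}|$ can be applied to $v_{0}=e^{-\widetilde{\varphi}}v$ without domain issues.
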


\subsubsection{Almost periodic fields distant from the periodic ones}  \label{422}

We suppose now that $b \in \WAP ( \R^{2} ; \R )$ but possibly \eqref{3} does not hold true. This corresponds to the assumptions of Section \ref{s6}. In this case, we can chose the solution $\varphi$ of \eqref{1} as $\varphi = \varphi_{0} + \widetilde{\varphi}$ with
\begin{equation*}
\varphi_{0} ( x ) : =  \frac{b_{0} \vert x \vert^{2}}{4} , \qquad \widetilde{\varphi} ( x ) : = \sum_{\lambda \in J_{0} ( b )} b_{\lambda} \frac{( \lambda \cdot x )^{2}}{\vert \lambda \vert^{2}} \int_{0}^{1} ( 1 - s ) e_{s \lambda} ( x ) \, d s , \quad x \in \R^{2} ,
\end{equation*}
(see \cite{r6}). Then, $\widetilde{\varphi}$ is well defined and  belongs to the class $C^{2}(\R^{2}; \R)$, but generally it is not bounded. However, similarly to \eqref{au30}, it satisfies
\begin{equation*}
\widetilde{\varphi} ( x ) = o ( \vert x \vert^{2} ) , \quad \vert x \vert \to \infty .
\end{equation*}
Hence, if $b_{0} \neq 0$, we have
\begin{equation} \label{au33}
\varphi ( x ) = \frac{b_{0} \vert x \vert^{2}}{4} ( 1 + o ( 1 ) ) , \quad \vert x \vert \to \infty .
\end{equation}
 Now, \eqref{b}--\eqref{c} and \eqref{au33} easily imply the following

\begin{proposition}\sl \label{pau4}
Let $b \in \WAP( \R^{2} ; \R )$ with $b_{0} \neq 0$. Then $\dim \Ker H = \infty$.
\end{proposition}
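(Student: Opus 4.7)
The plan is to combine the explicit description \eqref{b}--\eqref{c} of $\Ker a$ and $\Ker a^*$ with the asymptotic relation \eqref{au33}. Swapping the roles of $a$ and $a^*$ if necessary, we may assume $b_0 > 0$; in the opposite case one replaces $\varphi$ by $-\varphi$ (equivalently $e^{-\varphi}$ by $e^{\varphi}$) and holomorphic polynomials in $z$ by antiholomorphic polynomials in $\overline z$, and the argument is symmetric via \eqref{c}.

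First, I would extract from \eqref{au33} a genuine Gaussian upper bound on $e^{-\varphi}$. Fix $0 < \varepsilon < b_0/4$. Then \eqref{au33} supplies an $R_\varepsilon > 0$ with $\varphi(x) \geq (b_0/4 - \varepsilon)|x|^2$ for $|x| \geq R_\varepsilon$, and this, combined with the continuity of $\varphi$ on the compact set $\{|x| \leq R_\varepsilon\}$, yields constants $c, C > 0$ satisfying
\[
e^{-\varphi(x)} \leq C e^{-c|x|^2}, \quad x \in \R^2.
\]

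Next, for each nonnegative integer $n$, put $f_n(z) := z^n$, which is entire in $z$ (so $\partial f_n/\partial \overline z = 0$), and define $u_n(x) := f_n(z) e^{-\varphi(x)}$. The displayed bound gives $|u_n(x)|^2 \leq C^2 |x|^{2n} e^{-2c|x|^2}$, which is integrable on $\R^2$, hence $u_n \in L^2(\R^2)$. By \eqref{b}, $u_n \in \Ker a$ for every $n$; since $e^{-\varphi}$ vanishes nowhere, the family $\{u_n\}_{n \geq 0}$ is linearly independent (any nontrivial linear relation would force a nonzero polynomial in $z$ to vanish identically on $\R^2 \simeq \C$). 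Hence $\dim \Ker a = \infty$, and \eqref{aa} then delivers $\dim \Ker H = \infty$.

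The argument is essentially routine once \eqref{au33} is in hand; the only mild point to be careful about is converting the $o(1)$ term in \eqref{au33} into a uniform quadratic lower bound on $\varphi$ outside a compact set, which is an immediate consequence of the definition of $o(1)$. No substantial obstacle is anticipated.
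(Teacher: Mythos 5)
Your proof is correct and follows exactly the route the paper has in mind (the paper leaves it implicit, simply stating that \eqref{b}--\eqref{c} and \eqref{au33} ``easily imply'' the result): pass from \eqref{au33} to a Gaussian bound on $e^{-\varphi}$ (or $e^{\varphi}$ when $b_0<0$), exhibit the countably many linearly independent functions $z^n e^{-\varphi}$ (or $\overline z^n e^{\varphi}$) in $\Ker a$ (or $\Ker a^*$), and invoke \eqref{aa}. The only slip is cosmetic: in the $b_0<0$ case you do not ``replace $\varphi$ by $-\varphi$''—the same $\varphi$ is kept, one simply uses \eqref{c} instead of \eqref{b}—but your parenthetical clarification shows you mean exactly this, so the argument stands.
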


\begin{xrem}
The result of Proposition \ref{pau4} follows also from \cite[Theorem 3.11]{rs}. If, under the hypotheses of this proposition, there exists no bounded solution of  \eqref{1aa}, then estimate \eqref{au34} is not applicable. However, \cite[Theorem 3.11]{rs} implies that in this case still there is a gap in $\sigma(H)$ adjoining the origin.
\end{xrem}

We are going to show now that if $b \in \WAP( \R^{2} ; \R )$ and  $b_{0} = 0$, but \eqref{3} doesn't hold true, then the situation is quite different with respect to the periodic case. More precisely, for any given $m \in \N \cup \{ \infty \}$, we will construct almost periodic magnetic fields with vanishing mean value such that $\dim \Ker H = m$. Let
\begin{equation*}
C > 0, \quad K \in \N, \quad \gamma_{k} \in \SSS^{1} , \quad k = 1 , \ldots , K ,
\end{equation*}
with $\gamma_{k} \neq \gamma_{\ell}$ if $k \neq \ell$. Further, let
\begin{equation*}
s > 1 , \quad t > 0 , \quad s - 2 t \leq 1 .
\end{equation*}
We will consider magnetic fields of the form
\begin{equation} \label{6}
b ( x ) = C \sum_{k = 1}^{K} \sum_{n = 1}^{\infty} n^{- s} \cos ( n^{- t} \gamma_{k} \cdot x ) , \quad x \in \R^{2} .
\end{equation}
Then, $b \in \WAP ( \R^{2} ; \R )$ but it doesn't satisfy \eqref{3}. Moreover, a simple calculation yields
\begin{equation} \label{b1}
\varphi ( x ) = \widetilde{\varphi} ( x ) = 2 C \sum_{k = 1}^{K} g_{s , t}(\vert \gamma_{k} \cdot x \vert / 2) , \quad x \in \R^{2} ,
\end{equation}
where
\begin{equation} \label{au70}
g_{s , t} ( r ) : = \sum_{n = 1}^{\infty} n^{- s + 2 t} \sin^{2} ( n^{- t} r ) , \quad r \geq 0 .
\end{equation}
Evidently, $g_{s , t}$ satisfies the estimate
\begin{equation*}
0 \leq g_{s , t} ( r ) \leq \zeta ( s ) r^{2} , \quad r \geq 0 ,
\end{equation*}
where $\zeta$ is the Riemann zeta function, and $g_{s , t} ( r ) = 0$ if and only if $r = 0$. If $t > 0$ and $s = 1 + 2 t$, we will write $g_{t}$ instead of $g_{1 + 2 t , t}$. Let us discuss the asymptotic behavior of $g_{s , t} ( r )$ as $r \to \infty$.

\begin{proposition}\sl \label{pr1}
If $s > 1$ and $s - 2 t < 1$, then
\begin{equation*}
g_{s , t} ( r ) = C_{s , t} r^{\frac{- s + 2 t + 1}{t}} ( 1 + o ( 1 ) ) , \quad r \to \infty ,
\end{equation*}
where
$C_{s , t} : = \frac{1}{t} \int_{0}^{\infty} u^{\frac{s- 3t -1}{t}} \sin^{2} ( u ) \, d u$.
\end{proposition}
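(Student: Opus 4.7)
My plan is to realize $g_{s,t}(r)$ as a Riemann sum of mesh $1/R$ with $R := r^{1/t}$ for an improper integral whose value, computed by a change of variables, equals $C_{s,t} r^{(-s+2t+1)/t}$. Set $\alpha := s - 2t$, so the hypotheses read $\alpha < 1$ and $s = \alpha + 2t > 1$. Using $n^{-t} r = (n/R)^{-t}$ and $n^{-\alpha} = R^{\alpha}(n/R)^{-\alpha}$, the series \eqref{au70} factors as
\begin{equation*}
g_{s,t}(r) = R^{1-\alpha}\, S_R, \qquad S_R := \frac{1}{R}\sum_{n=1}^{\infty}\Psi(n/R), \qquad \Psi(y) := y^{-\alpha}\sin^{2}(y^{-t}).
\end{equation*}
Since $R^{1-\alpha} = r^{(1-\alpha)/t} = r^{(-s+2t+1)/t}$, it suffices to show $S_R \to \int_{0}^{\infty}\Psi(y)\,dy$ as $R \to \infty$, and to identify this integral with $C_{s,t}$.

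The identification is a direct change of variables: setting $u = y^{-t}$ in $\int_{0}^{\infty}\Psi(y)\,dy$ produces $\tfrac{1}{t}\int_{0}^{\infty} u^{(s-3t-1)/t}\sin^{2}(u)\,du = C_{s,t}$. The same computation reveals that $\Psi \in L^{1}(0,\infty)$ exactly under the hypotheses of the proposition: near $y = 0$, $|\Psi(y)| \leq y^{-\alpha}$ is integrable iff $\alpha < 1$, i.e.\ $s - 2t < 1$; near $y = \infty$, $|\Psi(y)| \leq y^{-s}$ (using $\sin^{2}(y^{-t}) \leq y^{-2t}$) is integrable iff $s > 1$.

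The principal work is the Riemann sum convergence. Because $\Psi$ oscillates infinitely often near $y = 0$ and has only polynomial decay at infinity, I would not invoke a classical Riemann sum theorem but proceed by a three-region cutoff. Fix $\delta \in (0,1)$ and split both the series and the integral at $y = \delta$ and $y = \delta^{-1}$, i.e.\ at indices $n \approx \delta R$ and $n \approx R/\delta$. On the compact middle region $[\delta,\delta^{-1}]$, $\Psi$ is continuous, hence uniformly continuous, so $\frac{1}{R}\sum_{\delta R \leq n \leq R/\delta}\Psi(n/R) \to \int_{\delta}^{1/\delta}\Psi(y)\,dy$ as $R \to \infty$. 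The near-zero tail is controlled uniformly in $R$ via $\sin^{2} \leq 1$:
\begin{equation*}
\frac{1}{R}\sum_{n \leq \delta R}\Psi(n/R) \leq R^{\alpha-1}\sum_{n=1}^{\lfloor \delta R \rfloor} n^{-\alpha} = O(\delta^{1-\alpha}),
\end{equation*}
and $\int_{0}^{\delta}\Psi$ obeys the same bound. The far tail uses $\sin^{2}(y^{-t}) \leq y^{-2t}$:
\begin{equation*}
\frac{1}{R}\sum_{n \geq R/\delta}\Psi(n/R) \leq R^{s-1}\sum_{n \geq R/\delta} n^{-s} = O(\delta^{s-1}),
\end{equation*}
with a matching estimate on $\int_{1/\delta}^{\infty}\Psi$. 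Choosing $\delta$ small first and $R$ large second renders $|S_R - C_{s,t}|$ arbitrarily small, yielding the proposition.

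The main obstacle is the uniform-in-$R$ estimate of the near-zero tail, where the rapid oscillation of $\Psi$ near $y = 0$ precludes any pointwise argument based on derivatives. The use of the crude bound $\sin^{2} \leq 1$ is what makes the tail tractable, and it is precisely the hypothesis $s - 2t < 1$ that turns the resulting dominant series $\sum n^{-\alpha}$ into the summable estimate $O(\delta^{1-\alpha})$.
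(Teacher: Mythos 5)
Your proof is correct and structurally follows the paper's: the same three-region split at $n \asymp \varepsilon r^{1/t}$ and $n\asymp M r^{1/t}$ (your $\delta R$ and $R/\delta$), the same uniform tail bounds via $\sin^2 \leq 1$ near zero and $\sin^2 z \leq z^2$ at infinity, and a sum-versus-integral comparison on the middle region. The only genuine difference is in executing that comparison: you rescale to $S_R = \frac{1}{R}\sum_n\Psi(n/R)$ for the fixed $R$-independent function $\Psi(y)=y^{-\alpha}\sin^2(y^{-t})$ and invoke the standard convergence of Riemann sums of a continuous function on a compact interval as the mesh $1/R\to 0$; the paper instead compares $\sum n^{-s+2t}\sin^2(r/n^t)$ directly to $\int y^{-s+2t}\sin^2(r/y^t)\,dy$ term by term, Taylor-expanding each summand against $\int_n^{n+1}$ and summing the error, and only performs the change of variables $u=ry^{-t}$ at the very end. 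Your rescaling is conceptually tidier and makes the logical skeleton (truncate tails, Riemann sum, substitute) visible at a glance; the paper's hands-on estimate yields an explicit middle-region error of order $r^{(-s+2t)/t}$, a factor $r^{1/t}$ below the main term, but that quantitative refinement is not used for the stated $o(1)$ conclusion.
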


\begin{xrem}
Under the hypotheses of Proposition \ref{pr1} we have $0 < \frac{- s + 2 t + 1}{t} < 2$.
\end{xrem}

\noindent
We prove Proposition \ref{pr1} in Section \ref{sa1} of the Appendix. Before we turn to the asymptotics as $r \to \infty$ of $g_{t} ( r )$ in the border-line case $t>0$ and $s = 1 + 2 t$, we state an elementary global estimate of $g_{t}$ in this case:

\begin{proposition}\sl \label{pau5}
If $s > 1$ and $s - 2 t = 1$, then
\begin{equation*}
g_{t} ( r ) \leq \ln \big( 1 + r^{1 / t} \big) + C_{t} , \quad r \geq 0 ,
\end{equation*}
where $C_{t} : = \zeta ( 1 + 2 t) + \sup_{n \in \N} ( \sum_{k = 1}^{n} k^{- 1} - \ln n )$.
\end{proposition}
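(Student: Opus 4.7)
The plan is to bound
\[
g_t(r) = \sum_{n=1}^\infty n^{-1}\sin^2(n^{-t}r)
\]
(the case $s = 1+2t$ of \eqref{au70}, where $-s+2t = -1$) by splitting the series at the natural threshold $n \approx r^{1/t}$, which is where the two elementary bounds $\sin^2 \leq 1$ and $\sin^2(x) \leq x^2$ have comparable strength. For $r \geq 1$ I will take the cutoff $N := \lceil r^{1/t}\rceil$, chosen so that simultaneously $N \geq r^{1/t}$ (to control the tail) and $N \leq 1 + r^{1/t}$ (so that $\ln N$ is absorbed by $\ln(1+r^{1/t})$). The residual case $r \in [0,1)$ will be handled by a crude global estimate at the end.

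For the head, using $\sin^2 \leq 1$ and the harmonic excess $\gamma^* := \sup_{n \in \N}\big(\sum_{k=1}^n k^{-1} - \ln n\big)$ appearing in the statement,
\[
\sum_{n=1}^N n^{-1}\sin^2(n^{-t}r) \leq \sum_{n=1}^N n^{-1} \leq \ln N + \gamma^* \leq \ln(1+r^{1/t}) + \gamma^*.
\]
For the tail, the bound $\sin^2(n^{-t}r) \leq (n^{-t}r)^2$ together with a standard integral comparison gives
\[
\sum_{n=N+1}^\infty n^{-1}\sin^2(n^{-t}r) \leq r^2 \int_N^\infty x^{-1-2t}\,dx = \frac{r^2 N^{-2t}}{2t} \leq \frac{1}{2t},
\]
using $N \geq r^{1/t}$ in the final step. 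The one piece of fine-tuning needed to match the exact constant $\zeta(1+2t)$ in $C_t$ (rather than the looser $1/(2t)$) is the elementary monotone comparison
\[
\frac{1}{2t} = \int_1^\infty x^{-1-2t}\,dx \leq \sum_{n=1}^\infty n^{-1-2t} = \zeta(1+2t).
\]
Adding the two estimates yields $g_t(r) \leq \ln(1+r^{1/t}) + \gamma^* + \zeta(1+2t) = \ln(1+r^{1/t}) + C_t$, as claimed.

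The case $r \in [0,1)$ is immediate: applying $\sin^2(n^{-t}r) \leq (n^{-t}r)^2$ to \emph{every} term gives $g_t(r) \leq r^2 \zeta(1+2t) \leq \zeta(1+2t) \leq C_t$, and $\ln(1+r^{1/t}) \geq 0$ preserves the inequality. I do not foresee any genuine obstacle: the argument is completely elementary, and the only care required is in the organizational points of (i) choosing the cutoff $N = \lceil r^{1/t}\rceil$ on the correct side of $r^{1/t}$, and (ii) noticing the $\zeta(1+2t) \geq 1/(2t)$ absorption that converts the naive $1/(2t)$ tail constant into the exact constant written in the proposition.
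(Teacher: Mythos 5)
Your proof is correct. Both you and the paper split the series at $n \approx r^{1/t}$, bounding the head by $\sin^2 \leq 1$ (yielding the harmonic sum and hence $\ln(1+r^{1/t})$ plus the harmonic-excess constant) and the tail by $\sin^2(\alpha) \leq \alpha^2$. The difference lies in how the tail is organized: the paper introduces the block decomposition $n = p\rho + q$ with $\rho = \lfloor 1 + r^{1/t}\rfloor$ and shows that the total over the $p \geq 1$ blocks is $\leq \sum_{p\geq 1} \rho(p\rho+1)^{-1}p^{-2t} \leq \zeta(1+2t)$, which gives the constant $\zeta(1+2t)$ directly and works uniformly for all $r \geq 0$ in one stroke. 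You instead use an integral comparison $\sum_{n>N} n^{-1-2t} \leq \int_N^\infty x^{-1-2t}\,dx$, which gives the slightly weaker tail bound $1/(2t)$, and you then observe $1/(2t) \leq \zeta(1+2t)$ to recover the stated constant; this forces you to treat $r < 1$ separately (since the cutoff $\lceil r^{1/t}\rceil$ is not well-suited there), whereas the paper's $\rho = \lfloor 1+r^{1/t}\rfloor \geq 1$ requires no case distinction. Both routes are elementary and valid; the paper's block regrouping is marginally tighter and more uniform, while yours is arguably more transparent at first reading.
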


\noindent
The simple proof of Proposition \ref{pau5} can be found in Section \ref{sa2} of the Appendix.

\begin{proposition}\sl \label{pr2}
If $s > 1$ and $s - 2 t = 1$, then
\begin{equation*}
g_{t}(r) = \frac{1}{2t} \ln r ( 1 + o ( 1 ) ) , \quad r \to \infty .
\end{equation*}
\end{proposition}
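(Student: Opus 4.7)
The strategy is to use the Mellin transform in the variable $r$ to extract the asymptotic behavior via residue calculus.

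Define $\tilde g_t(\sigma) := \int_0^\infty r^{\sigma-1}g_t(r)\,dr$. Convergence in the strip $-2 < \re\sigma < 0$ follows from the a priori bounds $g_t(r) = O(r^2)$ as $r\to 0$ (from $|\sin u|\leq|u|$ applied termwise) and $g_t(r) = O(\ln r)$ as $r\to\infty$ (from Proposition~\ref{pau5}). In this strip, Fubini's theorem together with the substitution $u = n^{-t}r$ in each term yields
\begin{equation*}
\tilde g_t(\sigma) = \sum_{n=1}^\infty n^{t\sigma-1}\, M_{\sin^2}(\sigma) = \zeta(1-t\sigma)\,M_{\sin^2}(\sigma) , \qquad M_{\sin^2}(\sigma) := \int_0^\infty u^{\sigma-1}\sin^2 u\,du .
\end{equation*}

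Next, I would compute $M_{\sin^2}$ explicitly. Using $\sin^2 u = (1-\cos 2u)/2$ together with the classical formula $\int_0^\infty u^{\sigma-1}\cos(2u)\,du = 2^{-\sigma}\Gamma(\sigma)\cos(\pi\sigma/2)$ (extended by analytic continuation to the strip $-2<\re\sigma<0$), one obtains $M_{\sin^2}(\sigma) = -\tfrac{1}{2}\cdot 2^{-\sigma}\Gamma(\sigma)\cos(\pi\sigma/2)$. Laurent expansions near $\sigma = 0$ give
\begin{equation*}
\zeta(1-t\sigma) = -\frac{1}{t\sigma} + \gamma + O(\sigma) , \qquad M_{\sin^2}(\sigma) = -\frac{1}{2\sigma} + \frac{\gamma+\ln 2}{2} + O(\sigma) ,
\end{equation*}
so that $\tilde g_t(\sigma)$ has a double pole at $\sigma = 0$:
\begin{equation*}
\tilde g_t(\sigma) = \frac{1}{2t\sigma^2} - \frac{(1+t)\gamma+\ln 2}{2t\sigma} + O(1) .
\end{equation*}

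By Mellin inversion, $g_t(r) = \frac{1}{2\pi i}\int_{\re\sigma = c} r^{-\sigma}\tilde g_t(\sigma)\,d\sigma$ for any $c \in (-2,0)$. Shifting the contour rightward across the double pole at $\sigma = 0$ to a line $\re\sigma = c' > 0$, one picks up minus the residue of $r^{-\sigma}\tilde g_t(\sigma)$ at $\sigma = 0$. Using $r^{-\sigma} = 1-\sigma\ln r + O(\sigma^2)$, this residue equals
\begin{equation*}
-\frac{\ln r}{2t} - \frac{(1+t)\gamma+\ln 2}{2t} ,
\end{equation*}
while the remaining integral on $\re\sigma = c'$ is bounded by $r^{-c'}$ times a finite constant, hence is $o(1)$ as $r \to \infty$. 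Therefore
\begin{equation*}
g_t(r) = \frac{\ln r}{2t} + \frac{(1+t)\gamma+\ln 2}{2t} + o(1) = \frac{1}{2t}\ln r\,(1+o(1)) , \qquad r\to\infty .
\end{equation*}

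The main technical point is the justification of the contour shift: one must control the horizontal contour segments at $|\im\sigma|\to\infty$ and verify that the integral on $\re\sigma = c'$ really decays. This follows from Stirling's asymptotic $|\Gamma(\sigma)\cos(\pi\sigma/2)|\asymp|\im\sigma|^{\re\sigma - 1/2}$ on vertical lines, combined with the classical polynomial convexity bound $|\zeta(1-t\sigma)|\ll(1+|\im\sigma|)^A$ in any vertical strip, yielding at most polynomial growth for $\tilde g_t$ along verticals, which together with the oscillation of $r^{-\sigma}$ in $\im\sigma$ (exploited via one or two integrations by parts in $\sigma$) makes the horizontal segments and the shifted integral manageable. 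This is the same technical ingredient that typically appears in Mellin-Barnes arguments for asymptotic expansions of Dirichlet-type series, in line with the paper's emphasis on connections to analytic number theory.
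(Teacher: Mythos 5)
Your Mellin–Barnes setup is internally consistent as far as it goes: the transform $\tilde g_t(\sigma)=\zeta(1-t\sigma)M_{\sin^2}(\sigma)$ on $-2<\re\sigma<0$, the identity $M_{\sin^2}(\sigma)=-\tfrac12 2^{-\sigma}\Gamma(\sigma)\cos(\pi\sigma/2)$, the Laurent expansions, and the residue at $\sigma=0$ are all correct, and the route (contour shift past the double pole) is a genuinely different strategy from the paper, which attacks the exponential sum $\sum_{n}n^{-1}\cos(2rn^{-t})$ directly via iterated Van der Corput inequalities (Lemmas \ref{a12}--\ref{a30}). However, there is a real gap in the decisive step: your claim that the remainder $\frac{1}{2\pi i}\int_{\re\sigma=c'}r^{-\sigma}\tilde g_t(\sigma)\,d\sigma$ is ``bounded by $r^{-c'}$ times a finite constant'' is false, because $\tilde g_t$ is not in $L^1$ of any vertical line $\re\sigma=c'>0$. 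Indeed, Stirling gives $|\Gamma(\sigma)\cos(\pi\sigma/2)|\asymp|\im\sigma|^{c'-1/2}$ (the $e^{-\pi|\im\sigma|/2}$ from $\Gamma$ is exactly cancelled by the $e^{\pi|\im\sigma|/2}$ from $\cos$), so even ignoring the zeta factor the integrand decays no faster than $|\im\sigma|^{-1/2+c'}$ and the $L^1$-norm diverges.

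The suggested repair via ``one or two integrations by parts'' also does not go through. Writing $\sigma=c'+i\tau$, the combined phase $\Phi(\tau)=-\tau\ln r+\arg\bigl[2^{-\sigma}\Gamma(\sigma)\cos(\pi\sigma/2)\zeta(1-t\sigma)\bigr]$ satisfies, by Stirling, $\arg\Gamma(c'+i\tau)=\tau\ln\tau-\tau+O(\ln\tau)$, so that $\Phi'(\tau)=\ln\tau-\ln(2r)+O(1)$. Hence $\Phi$ has a stationary point near $\tau_0\approx 2r$, where the oscillation of $r^{-\sigma}$ is killed and integration by parts gives no gain. A stationary-phase estimate there yields a contribution of order $r^{-c'}\,\tau_0^{c'-1/2}\,|\zeta(1-tc'-it\tau_0)|\cdot|\Phi''(\tau_0)|^{-1/2}\asymp|\zeta(1-tc'-2itr)|$, with the powers of $r$ and $c'$ cancelling exactly. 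For fixed $c'>0$ the convexity bound only gives $|\zeta(1-tc'-2itr)|\ll r^{tc'/2+\eps}$ (far larger than $\ln r$), and even after letting $c'=c'(r)\to0$ so that $1-tc'$ lies in the classical zero-free region, the bound is $O(\ln r)$, which still does not give the needed $o(\ln r)$. To close the gap one would need a genuine subconvexity input (Vinogradov--Korobov would do, but then one is implicitly invoking Van der Corput--type technology anyway), or a much more careful treatment of the oscillatory integral. In contrast, the paper's proof obtains exactly the required $o(\ln r)$ cancellation by applying the iterated Van der Corput inequality (Lemma \ref{a17}) to $\sum_{n\in I_k}e^{2irn^{-t}}/n$ over the zones $I_k$, which is a self-contained and elementary argument. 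As a smaller remark, even granting the contour shift, your final display asserts a full asymptotic $g_t(r)=\frac{\ln r}{2t}+\frac{(1+t)\gamma+\ln 2}{2t}+o(1)$, which is a strictly stronger claim than Proposition \ref{pr2}; the stationary-phase contribution above strongly suggests the error term oscillates and is only $o(\ln r)$, not $o(1)$, so this sharper statement would need independent justification.
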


\noindent
The proof of Proposition \ref{pr2} which is considerably more complicated than that of Proposition \ref{pr1}, is contained in Section \ref{sa3} of the Appendix.

\begin{xrem}
Note that $g_{s , t}$ is represented by  {\em a Dirichlet series}, and extends  to an entire function on the complex plane. More precisely, we have
\begin{equation*}
g_{s , t} ( z ) = 2 z^{2} \sum_{n = 0}^{\infty} \frac{( - 4 )^{n} \zeta ( s + 2 n t )}{(2 ( n + 1 ) ) !} z^{2 n} , \quad z \in \C .
\end{equation*}
The asymptotic behavior at infinity of entire functions whose coefficients involve values of the Riemann zeta function are of a considerable interest in analytic number theory (see e.g.
\cite[Section 14.32]{t}, \cite{ka}, \cite{lkt}). Since we didn't find in the literature the results of Propositions \ref{pr1} and \ref{pr2} which we needed, we include their detailed proofs with the hope that they could be useful to number theorists.
\end{xrem}

\begin{thm}\sl \label{th3}
Suppose that $b$ has the form \eqref{6} with
\begin{equation*}
s > 1 , \quad s - 2 t < 1 , \quad C = 1 , \quad K = 2 , \quad \gamma_{1} = ( 1 , 0 ) ,  \quad \gamma_{2} = ( 0 , 1 ) .
\end{equation*}
Then,
\begin{equation} \label{au60}
\dim \Ker H = \infty .
\end{equation}
\end{thm}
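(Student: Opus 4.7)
The plan is to exhibit infinitely many linearly independent elements of $\Ker a$, which by \eqref{a}--\eqref{aa} immediately yields \eqref{au60}. Since every summand $\cos(n^{-t}\gamma_{k}\cdot x)$ has zero mean, the field $b$ in \eqref{6} satisfies $b_{0}=0$; consequently the quadratic piece $\varphi_{0}$ disappears and the scalar potential from \eqref{b1} reduces to
\begin{equation*}
\varphi(x) = 2\, g_{s,t}(|x_{1}|/2) + 2\, g_{s,t}(|x_{2}|/2), \quad x\in\R^{2},
\end{equation*}
because $\gamma_{1}\cdot x=x_{1}$ and $\gamma_{2}\cdot x=x_{2}$.

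The next step is to upgrade the pointwise asymptotics of Proposition \ref{pr1} into a global lower bound on $\varphi$. Under the hypotheses $s>1$ and $s-2t<1$, that proposition gives $g_{s,t}(r)= C_{s,t}\, r^{\beta}(1+o(1))$ as $r\to\infty$ with $\beta=(1+2t-s)/t\in(0,2)$ and $C_{s,t}>0$ (the integrand defining $C_{s,t}$ is nonnegative and not identically zero). Combining this asymptotics with the nonnegativity and continuity of $g_{s,t}$ produces constants $c,C'>0$ such that
\begin{equation*}
\varphi(x) \geq c\bigl(|x_{1}|^{\beta}+|x_{2}|^{\beta}\bigr) - C', \quad x\in\R^{2}.
\end{equation*}
Since $\beta>0$, the weight $e^{-2\varphi(x)}$ decays faster than any polynomial in $|x|$.

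Finally I would invoke the characterization \eqref{b}: for every polynomial $P$ of one complex variable the function $u_{P}(x):=P(z)\,e^{-\varphi(x)}$, $z=x_{1}+ix_{2}$, lies in $\Ker a$ provided $u_{P}\in L^{2}(\R^{2})$. The estimate $|P(z)|^{2}\leq C_{P}\<x\>^{2\deg P}$ combined with the superpolynomial decay of $e^{-2\varphi}$ makes this integrability immediate. The monomials $\{z^{n}\}_{n\geq 0}$ then yield a linearly independent family in $\Ker a$, since multiplication by the nowhere-vanishing function $e^{-\varphi}$ preserves linear independence. Hence $\dim\Ker a=\infty$, and \eqref{aa} gives $\dim\Ker H=\infty$.

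The only nontrivial input is the polynomial-rate lower bound on $\varphi$ in the second step, which depends entirely on Proposition \ref{pr1}; this is the real content of the theorem and the place where the condition $s-2t<1$ (ensuring $\beta>0$) is essential. Once that estimate is in hand, the structure of the argument is exactly the one recalled in Section \ref{413} for periodic fields with $b_{0}\neq 0$, with the Gaussian confinement $e^{-b_{0}|x|^{2}/4}$ replaced by the weaker but still superpolynomial confinement $e^{-c(|x_{1}|^{\beta}+|x_{2}|^{\beta})}$.
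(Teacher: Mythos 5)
Your proposal is correct and takes essentially the same route as the paper: invoke Proposition \ref{pr1} to get $\varphi(x)\asymp|x|^{(1+2t-s)/t}$ with positive exponent, deduce that every $z^{m}e^{-\varphi}$ is square-integrable and hence lies in $\Ker a$ by \eqref{b}, and conclude via \eqref{aa}. The paper states the two-sided bound $\varphi\asymp|x|^{\beta}$ while you only derive (and only need) the lower bound, but this is an inessential difference.
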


\begin{proof}
By Proposition \ref{pr1}, we have $\varphi ( x ) \asymp \vert x \vert^{\frac{- s + 2 t + 1 }{t} }$ for large $\vert x \vert$. Therefore, $z^{m} e^{- \varphi} \in L^{2} ( \R^{2} )$ for any $m \in \Z_{+}$. By \eqref{b}, we conclude that $\dim \Ker a = \infty$ which combined with \eqref{aa} implies \eqref{au60}.
\end{proof}

\begin{xrem}
The operator $H$ considered in Theorem \ref{th3} falls under Section \ref{s6} but, in general, not under Section \ref{s5}. We don't know yet in general whether under the hypotheses of Theorem \ref{th3} the zero eigenvalue of $H$ is isolated in $\sigma(H)$. Nevertheless, the following proposition shows that there is no spectral gap if $s - t > 1$.
\end{xrem}

\begin{proposition}\sl \label{g1}
Assume that \eqref{1} has a solution $\varphi \in C^{2} ( \R^{2} ; \R )$ such that $\varphi ( x )$ $\geq 0$ for large $\vert x \vert$ and $\varphi ( x ) = o ( |x| )$ as $\vert x \vert \to \infty$. Then, $0$ is a limit point of $\sigma ( H^{+} )$.
\end{proposition}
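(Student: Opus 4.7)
The plan is to combine a variational upper bound showing $\inf \sigma ( H^{+} ) = 0$ with the observation that $0$ cannot be an eigenvalue of $H^{+}$. Since isolated points of the spectrum of a self-adjoint operator are always eigenvalues, these two facts together force $0$ to be a limit point of $\sigma ( H^{+} )$.

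For the variational bound I would use the explicit trial function $u_{\alpha} ( x ) := e^{\varphi ( x ) - \alpha \vert x \vert}$ with $\alpha > 0$. The assumption $\varphi ( x ) = o ( \vert x \vert )$ guarantees $2 \varphi ( x ) - 2 \alpha \vert x \vert \leq - \alpha \vert x \vert$ for $\vert x \vert$ sufficiently large, so $u_{\alpha} \in L^{2} ( \R^{2} )$ for every $\alpha > 0$. Using \eqref{j3} together with $\partial_{z} \vert x \vert = \bar{z} / ( 2 \vert x \vert )$ away from the origin, a direct computation gives
\begin{equation*}
a^{*} u_{\alpha} = - 2 i e^{\varphi} \partial_{z} ( e^{- \alpha \vert x \vert} ) = i \alpha \frac{\bar{z}}{\vert x \vert} e^{\varphi - \alpha \vert x \vert} .
\end{equation*}
Since $\vert \bar{z} \vert / \vert x \vert = 1$ on $\R^{2} \setminus \{ 0 \}$, this implies $\vert a^{*} u_{\alpha} \vert^{2} = \alpha^{2} \vert u_{\alpha} \vert^{2}$ almost everywhere, whence $\< H^{+} u_{\alpha} , u_{\alpha} \> = \Vert a^{*} u_{\alpha} \Vert^{2} = \alpha^{2} \Vert u_{\alpha} \Vert^{2}$. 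The min--max principle then yields $\inf \sigma ( H^{+} ) \leq \alpha^{2}$; letting $\alpha \downarrow 0$ gives $\inf \sigma ( H^{+} ) = 0$, and since $H^{+} \geq 0$ is self-adjoint, $0 \in \sigma ( H^{+} )$.

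The second step is to show $\Ker H^{+} = \Ker a^{*} = \{ 0 \}$. By \eqref{c}, any element of $\Ker a^{*}$ has the form $u = f e^{\varphi}$ with $f$ entire in $\bar{z}$. Continuity of $\varphi$ together with $\varphi ( x ) \geq 0$ for large $\vert x \vert$ makes $\varphi$ bounded below on all of $\R^{2}$, so $e^{\varphi} \geq c > 0$ and $u \in L^{2} ( \R^{2} )$ forces $f \in L^{2} ( \R^{2} )$. Since $\vert f \vert^{2}$ is subharmonic on $\R^{2}$ and belongs to $L^{1} ( \R^{2} )$, the mean-value inequality applied to balls of radius $R \to \infty$ forces $f \equiv 0$; equivalently, the Bergman space of the entire complex plane is trivial. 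Hence $\Ker H^{+} = \{ 0 \}$, and combined with the previous step, $0$ is not an isolated point of $\sigma ( H^{+} )$ and must therefore be a limit point.

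The step I expect to be the main obstacle is the choice of trial function: any cutoff-based approach such as $\bar{z}^{n} e^{\varphi} \chi_{R}$ picks up uncontrolled exponential factors $e^{2 \varphi}$ on the boundary of the support and only works when $\varphi$ grows at most logarithmically. The ansatz $e^{\varphi - \alpha \vert x \vert}$ circumvents this because the antiholomorphic factor $\bar{z} / \vert x \vert$ produced by $\partial_{z} \vert x \vert$ has unit modulus, so the pointwise ratio $\vert a^{*} u_{\alpha} \vert^{2} / \vert u_{\alpha} \vert^{2}$ reduces to the scalar $\alpha^{2}$ independently of $\varphi$, and the only remaining task is to ensure $u_{\alpha} \in L^{2}$, which is exactly what the hypothesis $\varphi ( x ) = o ( \vert x \vert )$ provides. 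A minor side check, namely that $u_{\alpha}$ lies in $\Dom ( a^{*} )$ despite the nondifferentiability of $\vert x \vert$ at $0$, is immediate because the classical derivative $-\alpha ( \bar{z} / ( 2 \vert x \vert ) ) e^{-\alpha \vert x \vert}$ is in $L^{\infty} ( \R^{2} )$ and $\{ 0 \}$ is a null set.
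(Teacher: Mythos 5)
Your proof is correct and follows essentially the same strategy as the paper's: a variational trial function of the form $e^{\varphi}$ damped by a small exponential, combined with the observation that $\Ker a^{*}=\{0\}$ (since $e^{\varphi}$ is bounded below) to rule out $0$ being an isolated eigenvalue. The paper uses $u=e^{\varphi-\varepsilon\langle x\rangle}$ with $\langle x\rangle=(1+|x|^{2})^{1/2}$ rather than your $e^{\varphi-\alpha|x|}$, which avoids the non-smoothness at the origin at the cost of a constant $C=4\|\partial_{z}\langle x\rangle\|_{L^{\infty}}^{2}$ in place of your exact $\alpha^{2}$; this is a cosmetic variation, and your side remark on $u_{\alpha}\in\Dom(a^{*})$ and your subharmonicity argument for triviality of the entire Bergman space correctly fill in details the paper leaves implicit.
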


\begin{proof}
For $\varepsilon > 0$, set $u ( x  ) = e^{\varphi ( x ) - \varepsilon \< x \>}$, $x \in \rd$. We have $u \in \Dom a^*$ and
\begin{equation*}
\< H^{+} u , u \> = \Vert a^{*} u \Vert^{2} = 4 \int e^{2 \varphi} \big\vert \partial_{z} e^{- \varepsilon \< x \>} \big\vert^{2} d x = 4 \varepsilon^{2} \int \vert \partial_{z} \< x \> \vert^{2} \vert u ( x ) \vert^{2} d x \leq C \varepsilon^{2} \Vert u \Vert^{2} ,
\end{equation*}
with $C = 4 \Vert \partial_{z} \< x \> \Vert^{2}_{L^{\infty}} < \infty$. Hence, $\inf \sigma ( H^{+} ) \leq C \varepsilon^{2}$ for any $\varepsilon > 0$. By $H^{+} \geq 0$, this implies $\inf \sigma ( H^{+} ) = 0$. Since $\varphi ( x ) \geq 0$ for large $\vert x \vert$, the kernel of $H^{+}$ is trivial by \eqref{c}, and $0$ has to be a limit point in $\sigma ( H^{+} )$.
\end{proof}

\begin{thm}\sl \label{th4}
Assume that $b$ has the form \eqref{6} with $t > 0$, $s = 1 + 2 t$,
\begin{equation} \label{au68}
C = \frac{1}{K} , \quad \gamma_{k} = ( \cos \theta_{k} , \sin \theta_{k} ) , \quad \theta_{k} = \frac{2 \pi k}{K}, \quad k = 1 , \ldots , K .
\end{equation}
Moreover, suppose that $t^{- 1} \not \in \N$, $K \geq 3$ is odd, and
\begin{equation} \label{au63}
\lfloor t^{- 1} \rfloor < \frac{K - 1}{K t} <  \frac{K + 1}{K t} < \lfloor t^{- 1} \rfloor + 1 .
\end{equation}
Then,
\begin{equation} \label{4}
\dim \Ker H = \lfloor t^{- 1} \rfloor .
\end{equation}
\end{thm}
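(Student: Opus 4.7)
The proof will combine the supersymmetric decomposition \eqref{aa}--\eqref{c} with a precise asymptotic analysis of the scalar potential $\varphi$ solving $\Delta \varphi = b$. Since the mean value of $b$ in \eqref{6} vanishes, \eqref{b1} gives $\varphi(x) = \frac{2}{K}\sum_{k=1}^{K} g_t(|\gamma_k\cdot x|/2)$. Writing $z = x_1 + ix_2$ and exploiting the oddness of $K$, one obtains the key algebraic identity
\begin{equation*}
\prod_{k=1}^K(\gamma_k \cdot x) = 2^{-(K-1)}\re(z^K) = 2^{-(K-1)} r^K \cos(K\theta)
\end{equation*}
in polar coordinates (it follows from $\prod_{j=0}^{K-1}(1 + \omega^j\eta) = 1 + \eta^K$ with $\omega = e^{2\pi i/K}$, valid for $K$ odd). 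Combined with Proposition \ref{pr2}, this yields $\varphi(z) = \frac{1}{t}\ln r \,(1+o(1))$ in any direction bounded away from the $2K$ \emph{bad rays} $\{\cos(K\theta) = 0\}$, whereas on each bad ray exactly one of the $|\gamma_k\cdot x|$ stays bounded and $\varphi(z) = \frac{K-1}{Kt}\ln r\,(1+o(1))$.

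To show $\dim\Ker a \geq \lfloor t^{-1}\rfloor$, I verify that $z^m e^{-\varphi} \in L^2(\R^2)$ for $m = 0,1,\ldots,\lfloor t^{-1}\rfloor - 1$. Away from the bad rays the integral of $|z^m e^{-\varphi}|^2$ essentially decouples in polar coordinates into
\begin{equation*}
\int_{1}^{\infty} r^{2m - 2/t + 1}\, dr \, \cdot \int_{0}^{2\pi} |\cos(K\theta)|^{-2/(Kt)}\, d\theta,
\end{equation*}
which is finite because $m < t^{-1} - 1$ (granted by $t^{-1}\notin\N$) and because $Kt > 2$; both are direct consequences of \eqref{au63}. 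A parallel calculation in a tube of width $O(1)$ around each bad ray --- where $\varphi \sim \frac{K-1}{Kt}\ln r$ --- gives an integrable contribution thanks to the strict inequality $\lfloor t^{-1}\rfloor < (K-1)/(Kt)$ of \eqref{au63}.

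The matching upper bound relies on the sub-mean-value inequality applied to $f\in\Ker a$. On a disk $B(z_0,\delta|z_0|)$ contained in a sector bounded away from the bad rays, the lower bound $e^{-2\varphi}\geq C|z_0|^{-2/t}$ yields $|f(z_0)|\leq C|z_0|^{1/t - 1}$ for large $|z_0|$, while on a disk $B(z_0,1)$ centred on a bad ray, the lower bound $e^{-2\varphi}\geq C|z_0|^{-2(K-1)/(Kt)}$ yields $|f(z_0)|\leq C|z_0|^{(K-1)/(Kt)}$. The second bound is global and polynomial, so $f$ is a polynomial of degree at most $\lfloor(K-1)/(Kt)\rfloor = \lfloor t^{-1}\rfloor$ by \eqref{au63}. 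Finally, if the coefficient of $z^{\lfloor t^{-1}\rfloor}$ in $f$ were non-zero, then the radial integral of $|f|^2 e^{-2\varphi}$ in any generic sector would diverge, since its exponent $2\lfloor t^{-1}\rfloor - 2/t + 1$ is strictly greater than $-1$ when $t^{-1}\notin\N$; a contradiction with $fe^{-\varphi}\in L^2$. Hence $\deg f \leq \lfloor t^{-1}\rfloor - 1$ and $\dim\Ker a = \lfloor t^{-1}\rfloor$.

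To conclude, $\Ker a^{*} = \{0\}$: $\varphi(x) \to +\infty$ along every ray (at rate at least $(K-1)/(Kt) > 0$), so $e^\varphi\notin L^2(\R^2)$, and Liouville's theorem rules out any non-zero entire antiholomorphic $f$ with $fe^\varphi \in L^2$. By \eqref{aa}, this completes the proof of \eqref{4}. The most delicate step is the precise asymptotic of $\varphi$ near the bad rays; the arithmetic condition \eqref{au63} is designed precisely so that the bad-ray integrability and the generic-direction polynomial-degree count both produce exactly $\lfloor t^{-1}\rfloor$.
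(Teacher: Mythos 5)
Your proof is essentially correct and follows the same overall strategy as the paper: use the supersymmetric decomposition \eqref{aa}--\eqref{c}, the asymptotics of $g_t$ from Propositions \ref{pr2}--\ref{pau5}, a sub-mean-value estimate to show $f\in\Ker a$ is a polynomial, and then a careful degree count. A genuine improvement is the explicit algebraic identity $\prod_{k=1}^{K}(\gamma_k\cdot x)=2^{1-K}\re(z^K)$ (valid since $K$ is odd), which identifies the bad rays at once as $\{\cos(K\theta)=0\}$ and replaces the paper's slightly ad hoc partition of $\SSS^1$ into $2K$ arcs $\alpha_\ell$; it also leads to the clean angular factor $\int_0^{2\pi}\vert\cos(K\theta)\vert^{-2/(Kt)}\,d\theta<\infty$ from $Kt>2$, a condition that sits transparently inside \eqref{au63}. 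Your upper bound on $\dim\Ker a$ is organized a bit differently (first cap $\deg f\le\lfloor(K-1)/(Kt)\rfloor=\lfloor t^{-1}\rfloor$ via pointwise estimates on the bad rays, then kill the top coefficient by divergence in a generic sector), whereas the paper proves directly that $z^{m}e^{-\varphi}\notin L^2$ for $m\ge\lfloor t^{-1}\rfloor$ by applying Proposition \ref{pau5} to the one bad term and obtaining the global bound $\varphi\le(1+\varepsilon)\tfrac{K+1}{Kt}\ln\langle x\rangle+C$; both routes work and exploit the two strict inequalities of \eqref{au63} in the same places.

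Two points you should tighten. First, the ``essentially decouples'' step: Proposition \ref{pr2} gives $g_t(r')\sim\tfrac{1}{2t}\ln r'$ only as $r'\to\infty$, so it is not uniform near the bad rays where $r'=r\vert\cos(\theta-\theta_k)\vert/2$ stays bounded. To justify the factorized integral you need a \emph{global} one-sided bound $g_t(r')\ge\tfrac{1-\varepsilon}{2t}\ln r'-C_\varepsilon$ for all $r'\ge 0$, which indeed follows by combining Proposition \ref{pr2} with the trivial nonnegativity $g_t\ge 0$ (the latter absorbs the negative logarithm for small $r'$); you should state this explicitly, since it is precisely what makes the angular singularity integrable rather than merely heuristic, and it also makes your separate tube estimate redundant. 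Second, the sentence ``the second bound is global and polynomial'' is misleading: the bound $\vert f(z_0)\vert\le C\vert z_0\vert^{(K-1)/(Kt)}$ is obtained only on disks centred on bad rays, and it is the generic-direction bound $\vert f(z_0)\vert\lesssim\vert z_0\vert^{1/t-1}$ that covers the complement; since $(K-1)/(Kt)>1/t-1$ (this uses $Kt>2$), the weaker of the two is the bad-ray one, and \emph{combining} them yields a global polynomial bound. Minor polish aside, the proof is sound.
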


\begin{xrem}
(i) Assume the hypotheses of Theorem \ref{th4}. In particular, \eqref{3a} holds true and the set $J ( b )$ is bounded. Suppose moreover that $t < 1$. Then, \eqref{4} implies that the zero eigenvalue of $H$ is of finite multiplicity. This result doesn't contradict Theorem \ref{th1} (iii) which holds true almost surely. On the other hand, by Corollary \ref{f1a}, $H$ cannot have isolated eigenvalues of finite multiplicity which, in this particular case, follows also from \cite[Theorem 10.1]{shu1}. Hence, we have $0 \in \sigma_{\rm ess} ( H )$. Of course, this last fact is also implied  by Proposition \ref{g1}.

(ii) Theorem \ref{th4} is valid under much more general hypotheses concerning the family $\{\gamma_{k} \}_{k = 1}^{K} \subset \SSS^{1}$, In particular, if $\gamma_{k}$ are defined as in \eqref{au68}, we can assume that $K \geq 4$ is even; in this case the numbers $K \pm 1$ in \eqref{au63} have to be replaced by $K \pm 2$. Since, anyway, Theorem \ref{th4} should be regarded rather as a pioneering example of 2D Pauli operators with almost periodic fields which admit eigenvalues of finite multiplicity than an exhaustive description of such operators, we decided not to treat more general families $\{ \gamma_{k} \}$, but to make our construction as explicit and simple as possible.
\end{xrem}

\begin{proof}[Proof of Theorem \ref{th4}]
First, we will prove that
\begin{equation} \label{au50}
\dim \Ker a = \lfloor t^{- 1} \rfloor .
\end{equation}
By \eqref{b}, $u \in \Ker a$ implies that $u = e^{- \varphi} f$ with entire $f$. Let us show that under our hypotheses, $f$ is a holomorphic polynomial. Since $\re f$ and $\im f$ are harmonic, it follows from \cite[Section 2.2, Theorem 7]{evans}) that for any $m \in \Z_+$ there exists a constant $c_{m}$ such that for any $z \in \C$ and $R \in ( 0 , \infty )$ we have
\begin{equation*}
\vert f^{( m )} ( z ) \vert \leq \frac{c_{m}}{R^{2 + m}} \int_{B_{R} ( z )} \vert f ( x_{1} + i x_{2} ) \vert \, d x ,
\end{equation*}
where $B_{R} ( z ) = \{ w \in \C  \, | \, \ \vert z - w \vert < R \}$. Combining this estimate with the Cauchy--Schwarz inequality, we get
\begin{equation} \label{au39}
\vert f^{( m )} ( z ) \vert \leq \frac{c_{m}}{R^{2 + m}} \bigg( \int_{B_{R} ( z )} e^{2 \varphi} d x \bigg)^{1 / 2} \Vert u \Vert_{L^{2} ( \R^{2} )} .
\end{equation}
By Proposition \ref{pau5}, we have
\begin{equation} \label{au40}
\int_{B_{R} ( z )} e^{2 \varphi} d x \leq \int_{B_{R + \vert z \vert} ( 0 )} e^{2 \varphi} d x \leq 2 \pi e^{2 C_{t}} \int_{0}^{R + \vert z \vert} \big( 1 + ( r / 2)^{t^{- 1}} \big)^{4} r \, d r = \CO ( R^{4 t^{- 1} + 2} ) ,
\end{equation}
for large $R$. Letting  $R \to \infty$, we find that \eqref{au39} and \eqref{au40} imply that $f^{( m )} ( z ) = 0$ if $2 + m > 2 t^{- 1} + 1$. Since $z$ is arbitrary, $f$ is a polynomial.

Let us now calculate the maximal possible degree of $f$. To this end, we will need a suitable partition of the unit circle $\SSS^{1}$. Since $K \geq 3$ is odd, it is easy to see that there exist disjoint open arcs $\alpha_{\ell} \subset \SSS^{1}$, $\ell \in \{ 1 , \ldots , 2 K \}$, such that $\SSS^{1} = \bigcup_{\ell = 1}^{2 K} \overline{\alpha_{\ell}}$ and for each $\ell = 1 , \ldots , 2 K$ there exists a unique $k_{\ell} \in \{ 1 , \ldots , K \}$ such that
\begin{equation} \label{au45}
\min_{k \neq k_{\ell}} \inf_{\nu \in \alpha_{\ell}} \vert \gamma_{k} \cdot \nu \vert > 0 .
\end{equation}
Next, pick $\varepsilon \in (0,1)$ such that
\begin{equation} \label{au41}
\lfloor t^{-1}\rfloor < (1-\veps)\frac{K-1}{Kt} < (1+\veps) \frac{K+1}{Kt} < \lfloor t^{-1}\rfloor + 1,
\end{equation}
which is possible thanks to \eqref{au63}. By Proposition \ref{pr2} and \eqref{au45}, there exists $C = C_{\varepsilon , t , K} > 0$ such that
\begin{equation} \label{au47}
\frac{( 1 - \varepsilon )}{2 t} \ln \< x \> - C \leq g_{t} ( \vert \gamma_{k} \cdot x \vert / 2 ) \leq \frac{( 1 + \varepsilon )}{2 t} \ln \< x \> + C , \quad k \neq k_{\ell} , \quad \frac{x}{\vert x \vert} \in \alpha_{\ell} ,
\end{equation}
for all $\ell = 1 , \ldots , 2 K$. Assume that $t < 1$ i.e. $\lfloor t^{- 1}\rfloor \geq 1$, and pick $m \in \Z_{+}$ with $m \leq \lfloor t^{- 1} \rfloor - 1$. We will show that $u = z^{m} e^{- \varphi} \in L^{2} ( \R^{2} )$. Using the first inequality of \eqref{au47} to estimate the contribution of $k \neq k_{\ell}$ and $g_{t} ( y ) \geq 0$ for any $y \in \R^{2}$ to estimate the contribution of $k = k_{\ell}$, we find that the function $\varphi$ given in \eqref{b1} satisfies
\begin{equation*}
\varphi ( x ) \geq ( 1 - \varepsilon ) \frac{K - 1}{K t} \ln \< x \> - C , \quad x \in \R^{2} ,
\end{equation*}
with an appropriate constant $C$. Therefore,
\begin{equation} \label{au44}
\Vert u \Vert_{L^{2} ( \R^{2} )}^{2} = \int_{\R^{2}} \vert x \vert^{2 m} e^{- 2 \varphi ( x )} d x \leq e^{2 C} \int_{\R^{2}} \vert x \vert^{2 m} \< x \>^{- 2 ( 1 - \varepsilon ) \frac{K - 1}{K t}} d x .
\end{equation}
Since $m \leq \lfloor t^{- 1} \rfloor - 1$, the first inequality in \eqref{au41} yields
\begin{equation*}
2 m - 2 ( 1 - \varepsilon ) \frac{K - 1}{K t} < - 2 ,
\end{equation*}
and the last integral in \eqref{au44} is convergent. Thus we find that if $t < 1$, then
\begin{equation} \label{au49}
\dim \Ker a \geq \lfloor t^{- 1} \rfloor .
\end{equation}
If $t \geq 1$, i.e. $\lfloor t^{- 1} \rfloor = 0$, then \eqref{au49} is trivially true.

Assume now $t > 0$ and pick $m \in \Z_{+}$ with $m \geq \lfloor t^{- 1} \rfloor$. We will show that the function $u = z^{m} e^{- \varphi} \not \in L^{2} ( \R^{2} )$. Using the second inequality of \eqref{au47} to estimate the contribution of $k \neq k_{\ell}$ and Proposition \ref{pau5} to estimate the contribution of $k = k_{\ell}$, we find that
\begin{equation*}
\varphi ( x ) \leq ( 1 + \varepsilon ) \frac{K + 1}{K t} \ln \< x \> + C , \quad x \in \R^{2} ,
\end{equation*}
with an appropriate constant $C$. Thus,
\begin{equation} \label{b2}
\Vert u \Vert_{L^{2} ( \R^{2} )}^{2} = \int_{\R^{2}} \vert x \vert^{2 m} e^{- 2 \varphi ( x )} d x \geq e^{- 2 C} \int_{\R^{2}} \vert x \vert^{2 m} \< x \>^{- 2 ( 1 + \varepsilon ) \frac{K + 1}{K t}} d x .
\end{equation}
Since $m \geq \lfloor t^{- 1} \rfloor$, the second inequality in \eqref{au41} gives
\begin{equation*}
2 m - 2 ( 1 + \varepsilon ) \frac{K + 1}{K t} > - 2 ,
\end{equation*}
and the last integral in \eqref{b2} is divergent. Then $\dim \Ker a < \lfloor t^{- 1} \rfloor + 1$ which combined with \eqref{au49} implies \eqref{au50}. Let us now prove that
\begin{equation} \label{au51}
\dim \Ker a^{*} = 0 .
\end{equation}
By \eqref{c}, $u \in \Ker a^{*}$ is equivalent to $u = e^{\varphi} f$ with entire $\overline{f}$. Under our hypotheses, $e^{\varphi} \geq 1$ so that  $u \in L^{2} ( \R^{2} )$ implies $f \in L^{2} ( \R^{2} )$.  Hence, $f$ vanishes identically which implies \eqref{au51}.
Putting together \eqref{au50} and \eqref{au51}, and taking into account \eqref{aa}, we obtain \eqref{4}.
\end{proof}

\appendix

\section{Asymptotics of Dirichlet series} \label{sa}

\subsection{Proof of Proposition \ref{pr1}} \label{sa1}
First, we estimate the contribution of the large $n$'s in \eqref{au70}. Consider $M > 1$. Using $\vert \sin y \vert \leq \vert y \vert$ and $- s < - 1$, we deduce
\begin{align}
\sum_{n \geq M r^{1 / t}} n^{- s + 2 t} \sin^{2} \Big( \frac{r}{n^{t}} \Big) &\leq \sum_{n \geq M r^{1 / t}} n^{- s + 2 t} r^{2} n^{- 2 t}  \nonumber \\
&\leq C r^{2} ( M r^{1 / t} )^{1 - s} = C M^{1 - s} r^{\frac{1 - s + 2 t}{t}} . \label{a1}
\end{align}
Here and in the sequel, $C$ will denote a positive constant which may only depend on $s$ and $t$.

We now deal with the contribution of the small $n$'s. Consider $\varepsilon > 0$. Using that $\vert \sin y \vert \leq 1$ and $- s + 2 t > - 1$, we obtain
\begin{equation} \label{a2}
\sum_{n \leq \varepsilon r^{1 / t}} n^{- s + 2 t} \sin^{2} \Big( \frac{r}{n^{t}} \Big) \leq \sum_{n \leq \varepsilon r^{1 / t}} n^{- s + 2 t} \leq C ( \varepsilon r^{1 / t} )^{1 - s + 2 t} = C \varepsilon^{1 - s + 2 t} r^{\frac{1 - s + 2 t}{t}} .
\end{equation}

It remains to deal with the $n$'s of size $r^{1 /t}$. Let $n \in [ \varepsilon r^{1 / t} , M r^{1 / t} ]$. For $y \in [ n , n + 1]$, we have
\begin{equation} \label{a3}
y^{- s + 2 t} = n^{- s + 2 t} + \CO ( n^{- s + 2 t - 1} ) ,
\end{equation}
and
\begin{equation*}
y^{- t} = n^{- t} + \CO ( n^{- t - 1} ) .
\end{equation*}
In particular,
\begin{equation} \label{a4}
\Big\vert \sin^{2} \Big( \frac{r}{y^{t}} \Big) - \sin^{2} \Big( \frac{r}{n^{t}} \Big) \Big\vert \leq C r n^{- t - 1} \leq R r^{- 1 / t} ,
\end{equation}
where $R$ is a constant which may depend on $s , t , \varepsilon , M$. Inequalities \eqref{a3} and \eqref{a4} imply
\begin{align}
\bigg\vert n^{- s + 2 t} &\sin^{2} \Big( \frac{r}{n^{t}} \Big) - \int_{n}^{n+1} y^{- s + 2 t} \sin^{2} \Big( \frac{r}{y^{t}} \Big) \, d y \bigg\vert \nonumber \\
&= \bigg\vert \int_{n}^{n+1} \Big( n^{- s + 2 t} \sin^{2} \Big( \frac{r}{n^{t}} \Big) -  y^{- s + 2 t} \sin^{2} \Big( \frac{r}{y^{t}} \Big) \Big) \, d y \bigg\vert  \nonumber \\
&\leq n^{- s + 2 t} \int_{n}^{n + 1} \Big\vert \sin^{2} \Big( \frac{r}{n^{t}} \Big) - \sin^{2} \Big( \frac{r}{y^{t}} \Big) \Big\vert \, d y + \int_ {n}^{n + 1} \big\vert n^{- s + 2 t} - y^{- s + 2 t} \big\vert \, d y   \nonumber \\
&\leq R r^{\frac{-s + 2 t - 1}{t}} .
\end{align}
Summing this estimate over $n$ gives
\begin{align}
\bigg\vert \sum_{\varepsilon r^{1 / t} \leq n \leq M r^{1 / t}} n^{- s + 2 t} \sin^{2} &\Big( \frac{r}{n^{t}} \Big) - \int_{\varepsilon r^{1 / t}}^{M r^{1 / t}} y^{- s + 2 t} \sin^{2} \Big( \frac{r}{y^{t}} \Big) \, d y \bigg\vert \nonumber \\
&\leq R r^{\frac{-s + 2 t - 1}{t}} ( M - \varepsilon ) r^{1 / t} + R r^{\frac{- s + 2 t}{t}} \leq R r^{\frac{- s + 2 t}{t}} .  \label{a5}
\end{align}

Changing the variable $y = r^{1 / t} u^{- 1 / t}$, we find that
\begin{align}
\int_{\varepsilon r^{1 / t}}^{M r^{1 / t}} y^{- s + 2 t} \sin^{2} \Big( \frac{r}{y^{t}} \Big) \, d y
&= \frac{r^{\frac{1 - s + 2 t }{t}}}{t} \int_{M^{- t}}^{\varepsilon^{- t}} u^{\frac{s - 3 t - 1}{t}} \sin^{2} ( u ) \, d u . \label{a6}
\end{align}
Now the integrand behaves like
\begin{equation*}
u^{\frac{s - 3 t - 1}{t}} \sin^{2} ( u ) \leq \left\{ \begin{aligned}
&u^{\frac{s - t - 1}{t}} = u^{- 1 + \frac{s - 1}{t}} &&\text{ near } 0 ,  \\
&u^{\frac{s - 3 t - 1}{t}} = u^{- 1 + \frac{s - 2 t - 1}{t}} &&\text{ near } + \infty .
\end{aligned} \right.
\end{equation*}
Since $s - 1 > 0$ and $s - 2 t - 1 < 0$, this function is integrable on $(0,\infty)$, and \eqref{a6} becomes
\begin{align*}
\int_{\varepsilon r^{1 / t}}^{M r^{1 / t}} & y^{- s + 2 t} \sin^{2} \Big( \frac{r}{y^{t}} \Big) \, d y \\
&\quad = \frac{r^{\frac{1 - s + 2 t }{t}}}{t} \Big( \int_{0}^{+ \infty} u^{\frac{s - 3 t - 1}{t}} \sin^{2} ( u ) \, d u + o_{\varepsilon \to 0} ( 1 ) + o_{M \to + \infty} ( 1 ) \Big) .
\end{align*}

Eventually, combining the previous estimate together with \eqref{a1}, \eqref{a2} and \eqref{a4}, we deduce
\begin{align}
\Big\vert g_{s , t} ( r ) - & \frac{r^{\frac{1 - s + 2 t }{t}}}{t} \int_{0}^{+ \infty} u^{\frac{s - 3 t - 1}{t}} \sin^{2} ( u ) \, d u \Big\vert \leq C M^{1 - s} r^{\frac{1 - s + 2 t}{t}}   \nonumber \\
&+ C \varepsilon^{1 - s + 2 t} r^{\frac{1 - s + 2 t}{t}} + R r^{\frac{- s + 2 t}{t}} + r^{\frac{1 - s + 2 t }{t}} \big( o_{\varepsilon \to 0} ( 1 ) + o_{M \to + \infty} ( 1 ) \big) .
\end{align}
Then, we obtain the proposition taking $\varepsilon$ small enough and $M$ large enough. Here, we have used again that $1 - s < 0$ and $1 - s + 2 t > 0$.

\subsection{Proof of Proposition \ref{pau5}} \label{sa2}

Let $r \geq 0$ and $\rho : = \lfloor 1 + r^{1/t} \rfloor $. In particular, $\rho \geq 1$. Then we can write any $n \in \N$ as $n = p \rho + q$ with $p \in \Z_{+}$ and $q = 1,\ldots,\rho$, and this representation is unique. Therefore, since $\rho \geq r^{1/t}$ and $\sin^{2} ( \alpha ) \leq \min\{1,\alpha^{2}\}$ for $\alpha \in \R$, we find that
\begin{align*}
g_{t} ( r ) & = \sum_{p = 0}^{\infty} \sum_{q = 1}^{\rho} ( p \rho + q )^{- 1} \sin^{2} \bigg( \bigg( \frac{r^{1 / t}}{p \rho + q} \bigg)^{t} \bigg)  \\
& = \sum_{q = 1}^{\rho} q^{- 1} \sin^{2} ( r q^{-t} )  + \sum_{p=1}^{\infty} \sum_{q = 1}^{\rho} ( p \rho + q )^{- 1} \sin^{2} \bigg( \bigg( \frac{r^{1 / t}}{p \rho + q} \bigg)^{t} \bigg)  \\
& \leq \sum_{q = 1}^{\rho} q^{- 1} + \sum_{p = 1}^{\infty} \rho ( p \rho + 1 )^{- 1} p^{- 2 t}  \\
& \leq \ln \rho +  \sum_{q = 1}^{\rho} q^{- 1} - \ln \rho + \zeta ( 1 + 2 t ) \\
& \leq \ln (1 + r^{1 / t} ) + C_{t} ,
\end{align*}
which implies Proposition \ref{pau5}.

\subsection{Proof of Proposition \ref{pr2}}  \label{sa3}

Considering a given $\varepsilon > 0$, we will show that $g_{t} ( r )$ is at distance at most $\varepsilon \ln r$ from $\ln r / 2 t$ for $r$ large enough. For that, we decompose the sum over $\N$ into different zones which are summarized in Figure \ref{f1}.

\begin{figure}
\begin{center}
\begin{picture}(0,0)%
\includegraphics{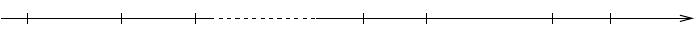}%
\end{picture}%
\setlength{\unitlength}{1105sp}%
\begingroup\makeatletter\ifx\SetFigFont\undefined%
\gdef\SetFigFont#1#2#3#4#5{%
  \reset@font\fontsize{#1}{#2pt}%
  \fontfamily{#3}\fontseries{#4}\fontshape{#5}%
  \selectfont}%
\fi\endgroup%
\begin{picture}(19866,1486)(-11432,-125)
\put(-10649,-61){\makebox(0,0)[b]{\smash{{\SetFigFont{9}{10.8}{\rmdefault}{\mddefault}{\updefault}$1$}}}}
\put(6001,-61){\makebox(0,0)[b]{\smash{{\SetFigFont{9}{10.8}{\rmdefault}{\mddefault}{\updefault}$r^{\frac{1}{t} + \delta}$}}}}
\put(-149,1214){\makebox(0,0)[b]{\smash{{\SetFigFont{9}{10.8}{\rmdefault}{\mddefault}{\updefault}$J_{1}$}}}}
\put(2551,1214){\makebox(0,0)[b]{\smash{{\SetFigFont{9}{10.8}{\rmdefault}{\mddefault}{\updefault}$I_{0}$}}}}
\put(-6899,1214){\makebox(0,0)[b]{\smash{{\SetFigFont{9}{10.8}{\rmdefault}{\mddefault}{\updefault}$J_{K}$}}}}
\put(-7949,-61){\makebox(0,0)[b]{\smash{{\SetFigFont{9}{10.8}{\rmdefault}{\mddefault}{\updefault}$r^{\frac{1}{t + K} - \delta}$}}}}
\put(-5849,-61){\makebox(0,0)[b]{\smash{{\SetFigFont{9}{10.8}{\rmdefault}{\mddefault}{\updefault}$r^{\frac{1}{t + K} + \delta}$}}}}
\put(-1049,-61){\makebox(0,0)[b]{\smash{{\SetFigFont{9}{10.8}{\rmdefault}{\mddefault}{\updefault}$r^{\frac{1}{t + 1} - \delta}$}}}}
\put(751,-61){\makebox(0,0)[b]{\smash{{\SetFigFont{9}{10.8}{\rmdefault}{\mddefault}{\updefault}$r^{\frac{1}{t + 1} + \delta}$}}}}
\put(4351,-61){\makebox(0,0)[b]{\smash{{\SetFigFont{9}{10.8}{\rmdefault}{\mddefault}{\updefault}$r^{\frac{1}{t} - \delta}$}}}}
\put(5176,1214){\makebox(0,0)[b]{\smash{{\SetFigFont{9}{10.8}{\rmdefault}{\mddefault}{\updefault}$J_{0}$}}}}
\put(8101,1064){\makebox(0,0)[b]{\smash{{\SetFigFont{9}{10.8}{\rmdefault}{\mddefault}{\updefault}$n \in \N$}}}}
\end{picture}%
\end{center}
\caption{The different regions considered in the proof of Proposition \ref{pr2}.} \label{f1}
\end{figure}

\underline{Step 1:} treatment of the large values of $n$. We set
\begin{equation} \label{a9}
\delta = \min \Big( \frac{1}{2 t + 2 \varepsilon^{- 1} + 2} , \frac{\varepsilon}{2 \varepsilon^{- 1} + 4} \Big).
\end{equation}
Then $0 < \delta \leq \veps$. Let
\begin{equation*}
u_{n} ( r ) : = \frac{1}{n} \sin^{2} \Big( \frac{r}{n^{t}} \Big)
\end{equation*}
be the generic term of the series which defines $g_{t} ( r )$. Then, $\vert \sin y \vert \leq \vert y \vert$ yields
\begin{equation} \label{a10}
\sum_{n \geq r^{\frac{1}{t} + \delta}} \vert u_{n} \vert \leq \sum_{n \geq r^{\frac{1}{t} + \delta}} r^{2} n^{- 1 - 2 t} \lesssim r^{2} r^{- 2 t ( \frac{1}{t} + \delta )} = r^{- 2 t \delta} \leq \varepsilon \ln r ,
\end{equation}
for $r$ large enough.

\underline{Step 2:} decomposition of $u_{n}$. We write
\begin{equation} \label{a35}
u_{n} ( r ) = \frac{1}{2 n} - \frac{v_{n} ( r ) + i \overline{v_{n} ( r )}}{2} \qquad \text{with} \qquad v_{n} ( r ) =\frac{1}{n} e^{2 i r n^{- t}} .
\end{equation}
From \eqref{a9} and the asymptotics of the harmonic series, we deduce
\begin{equation} \label{a11}
\sum_{n < r^{\frac{1}{t} + \delta}} \frac{1}{2 n} = \frac{1}{2} \ln r^{\frac{1}{t} + \delta} + \CO ( 1 ) = \frac{\ln r}{2 t} + \varrho \qquad \text{with} \qquad \vert \varrho \vert \leq \varepsilon \ln r ,
\end{equation}
for $r$ large enough. It remains to study the sum of $v_{n}$ for $n < r^{\frac{1}{t} + \delta}$.

\underline{Step 3:} treatment of the small values of $n$. Let
\begin{equation*}
K = \lceil \varepsilon^{- 1} \rceil \in \N ,
\end{equation*}
where $\lceil y \rceil$ denotes the smallest integer greater or equal to $y$. Note that \eqref{a9} guarantees that $\delta < ( t + K )^{- 1}$. Using again the asymptotics of the harmonic series, we get
\begin{equation} \label{a34}
\sum_{n \leq r^{\frac{1}{t + K} - \delta}} \vert v_{n} \vert = \sum_{n \leq r^{\frac{1}{t + K} - \delta}} \frac{1}{n} = \ln r^{\frac{1}{t + K} - \delta} + \CO ( 1 ) \leq \frac{\ln r}{t + K} \leq \varepsilon \ln r ,
\end{equation}
for $r$ large enough. It remains to study the contribution of
\begin{equation*}
\big( r^{\frac{1}{t + K} - \delta} , r^{\frac{1}{t} + \delta} \big) = \Big( \bigcup_{k = 0}^{K - 1} I_{k} \Big) \cup \Big( \bigcup_{k = 0}^{K} J_{k} \Big) ,
\end{equation*}
with
\begin{equation} \label{a23}
I_{k} = \big[ r^{\frac{1}{t + k + 1} + \delta} , r^{\frac{1}{t + k} - \delta} \big] \qquad \text{ and } \qquad J_{k} = \big( r^{\frac{1}{t+k} - \delta} , r^{\frac{1}{t+k} + \delta} \big) .
\end{equation}
Note that by \eqref{a9}, the intervals $I_{k}$ and $J_{k}$ are non-empty.

\underline{Step 4:} contribution of the small regions $J_{k}$ for $k = 0 , \ldots , K$. Using \eqref{a9} and
\begin{equation*}
\sum_{a \leq n \leq b} \frac{1}{n} \leq \ln b - \ln a + 1,
\end{equation*}
for all $1 \leq a < b$, we deduce
\begin{equation} \label{a33}
\sum_{n \in J_{k} \cap \N} \vert v_{n} \vert = \sum_{n \in J_{k} \cap \N} \frac{1}{n} \leq \ln r^{\frac{1}{t+k} + \delta} - \ln r^{\frac{1}{t+k} - \delta} + 1 \leq 2 \delta \ln r + 1 \leq \frac{\varepsilon}{K + 1} \ln r ,
\end{equation}
for $r$ large enough.

\underline{Step 5:} the iterated derivatives. It remains to study the contribution of the bands $I_{k}$ for $k = 0 , \ldots , K - 1$. Here, we can not bound directly the sum and must use some cancelations. For that, we first define the derivatives of the phase function $2 r n^{- t}$ with respect to $n$.

For $\{ h_{j} \}_{j \in \N} \subset \N$ and $y \in [ 1 , \infty )$, we define $a_{y}^{( j )} ( h_{1} , \ldots , h_{j} )$ by induction over $j \in \Z_{+}$ by $a_{y}^{( 0 )} = 2 r y^{- t}$ and
\begin{equation} \label{a16}
a_{y}^{( j )} ( h_{1} , \ldots , h_{j} ) = a_{y + h_{j}}^{( j - 1 )} ( h_{1} , \ldots , h_{j - 1} ) - a_{y}^{( j - 1 )} ( h_{1} , \ldots , h_{j - 1} ) .
\end{equation}
Roughly speaking, $a_{y}^{( j )}$ is the $j^{\text{st}}$ discrete derivative of $a_{y}^{( 0 )}$ with respect to $y$. Set
\begin{equation*}
S(w) = \big\{ f \in C^{\infty} ( [ 1 , \infty ) ) \, | \, \ \vert \partial_{y}^{k} f ( y ) \vert \lesssim y^{- k} w ( y ) , \ k \in \Z_{+} , \ y \in [ 1 , \infty ) \big\} ,
\end{equation*}
$w > 0$ being an appropriate weight function. We will write $f_{1} = f_{2} \modu S ( w )$ if $f_{1} - f_{2} \in S ( w )$.

\begin{lem}\sl \label{a12}
For any $j \in \Z_{+}$, there exist symbols $P_{j} \in S ( y^{2^{j} t - t - j - 1} )$ and $Q_{j} \in S ( y^{2^{j} t - 1} )$ such that
\begin{equation*}
a_{y}^{( j )} ( h_{1} , \ldots , h_{j} ) = 2 r ( - 1 )^{j} \prod_{\ell = 1}^{j} ( \ell + t - 1 ) h_{\ell} \frac{y^{2^{j} t - t - j} + P_{j} ( y )}{y^{2^{j} t} + Q_{j} ( y )} .
\end{equation*}
\end{lem}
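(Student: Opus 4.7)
The plan is to induct on $j$. The base case $j=0$ is immediate: $a_{y}^{(0)} = 2ry^{-t}$ can be written as $2r(-1)^{0}\cdot 1\cdot\dfrac{y^{0}+P_{0}(y)}{y^{t}+Q_{0}(y)}$ with $P_{0}\equiv Q_{0}\equiv 0$, and these vacuously satisfy the symbol bounds.

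For the inductive step, assume the statement holds at level $j-1$ and write, for brevity, $p:=2^{j-1}t-t-(j-1)$ and $q:=2^{j-1}t$ (so $q-p=t+j-1$ and $2q=2^{j}t$). Set $F(y):=\dfrac{y^{p}+P_{j-1}(y)}{y^{q}+Q_{j-1}(y)}$, so that by \eqref{a16} and the inductive hypothesis
\begin{equation*}
a_{y}^{(j)}(h_{1},\dots,h_{j})=2r(-1)^{j-1}\prod_{\ell=1}^{j-1}(\ell+t-1)h_{\ell}\cdot\bigl[F(y+h_{j})-F(y)\bigr].
\end{equation*}
Combining over a common denominator, the denominator is
\begin{equation*}
\bigl[y^{q}+Q_{j-1}(y)\bigr]\bigl[(y+h_{j})^{q}+Q_{j-1}(y+h_{j})\bigr]=y^{2q}+Q_{j}(y),
\end{equation*}
where the routine binomial expansion $(y+h_{j})^{q}=y^{q}+$ (terms in $S(y^{q-1})$), together with the shift stability of the symbol classes, yields $Q_{j}\in S(y^{2q-1})=S(y^{2^{j}t-1})$, as required.

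The bulk of the argument is the analysis of the numerator
\begin{equation*}
\mathcal{N}(y):=\bigl[(y+h_{j})^{p}+P_{j-1}(y+h_{j})\bigr]\bigl[y^{q}+Q_{j-1}(y)\bigr]-\bigl[y^{p}+P_{j-1}(y)\bigr]\bigl[(y+h_{j})^{q}+Q_{j-1}(y+h_{j})\bigr].
\end{equation*}
Splitting this expression into four pieces according to which factors carry the $P$'s and $Q$'s, the pure-power piece is
\begin{equation*}
(y+h_{j})^{p}y^{q}-y^{p}(y+h_{j})^{q}=y^{p}(y+h_{j})^{p}\bigl[y^{q-p}-(y+h_{j})^{q-p}\bigr].
\end{equation*}
The binomial expansion gives $y^{q-p}-(y+h_{j})^{q-p}=-(t+j-1)h_{j}y^{t+j-2}+S(y^{t+j-3})$ and $y^{p}(y+h_{j})^{p}=y^{2p}+S(y^{2p-1})$, so the pure-power piece equals $-(t+j-1)h_{j}y^{2p+t+j-2}+S(y^{2p+t+j-3})=-(t+j-1)h_{j}y^{2^{j}t-t-j}+S(y^{2^{j}t-t-j-1})$, which is the desired leading term. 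The three remaining pieces
\begin{equation*}
P_{j-1}(y+h_{j})y^{q}-P_{j-1}(y)(y+h_{j})^{q},\quad (y+h_{j})^{p}Q_{j-1}(y)-y^{p}Q_{j-1}(y+h_{j}),
\end{equation*}
and $P_{j-1}(y+h_{j})Q_{j-1}(y)-P_{j-1}(y)Q_{j-1}(y+h_{j})$ are each handled by the ``add and subtract'' identity $AB-CD=A(B-D)+(A-C)D$ combined with the Taylor-with-remainder bound $|f(y+h_{j})-f(y)|\lesssim h_{j}\sup_{s\in[0,1]}|f'(y+sh_{j})|$ (and its iterates). Using that $P_{j-1}\in S(y^{p-1})$ and $Q_{j-1}\in S(y^{q-1})$, these three pieces all lie in $S(y^{p+q-2})=S(y^{2^{j}t-t-j-1})$. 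Hence $\mathcal{N}(y)=-(t+j-1)h_{j}\bigl[y^{2^{j}t-t-j}+P_{j}(y)\bigr]$ with $P_{j}\in S(y^{2^{j}t-t-j-1})$, and dividing by $y^{2^{j}t}+Q_{j}(y)$ produces exactly the claimed formula, with the accumulated sign and factors matching $2r(-1)^{j}\prod_{\ell=1}^{j}(\ell+t-1)h_{\ell}$.

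The main obstacle is purely bookkeeping: one has to verify at each step that the remainder sits \emph{one} power below the leading term of the numerator (not merely of lower order), since that one-power gain is what the statement of the lemma records. This is delicate because one must use the $h_{j}$-shift invariance of the symbol classes, the closure of $S(\cdot)$ under multiplication, and the sharp gain $f(y+h_{j})-f(y)\in S(w(y)y^{-1})$ whenever $f\in S(w)$, applied to both $P_{j-1}$ and $Q_{j-1}$. Once these facts are set up as a short lemma on symbol calculus, the inductive step is a direct computation.
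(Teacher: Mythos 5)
Your proposal is correct and follows essentially the same route as the paper: both argue by induction on $j$, cross-multiply over the common denominator, and exploit the one-power gain $f(\cdot+h)-f\in S(wy^{-1})$ (Taylor/mean-value) to show that the leading terms of the numerator cancel while the remainder drops one order. The paper simply Taylor-expands both powers $(y+h)^{2^{j}t-t-j}$ and $(y+h)^{2^{j}t}$ to first order and multiplies out; you instead isolate the pure-power part, factor it as $y^{p}(y+h_{j})^{p}\bigl[y^{q-p}-(y+h_{j})^{q-p}\bigr]$, and handle the three mixed terms with the $AB-CD=A(B-D)+(A-C)D$ identity. That is a cleaner piece of bookkeeping but not a different argument; the estimates that make it work are identical, and your claim $P_{j}\in S(y^{2^{j}t-t-j-1})$, $Q_{j}\in S(y^{2^{j}t-1})$ matches the paper's.
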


\begin{xrem}
The functions $P_{j} , Q_{j}$ may depend on $h_{1} , \ldots , h_{j} \in \N$. Note that the prefactor never vanishes since $t > 0$. When $t$ is an integer, $P_{j} , Q_{j}$ are polynomials in $n$.
\end{xrem}

\begin{proof}[Proof of Lemma \ref{a12}]
We show this property by induction over $j \in \Z_{+}$. By definition, it is satisfied for $j = 0$ with $P_{0} = Q_{0} = 0$. We assume that it holds true for some $j \in \Z_{+}$. Then, using the shorthand notation
\begin{equation} \label{a18}
C_{j} = 2 r ( - 1 )^{j} \prod_{\ell = 1}^{j} ( \ell + t - 1 ) h_{\ell} ,
\end{equation}
we can write
\begin{equation} \label{a13}
a_{y}^{( j + 1)} = C_{j} \frac{( y + h_{j + 1} )^{2^{j} t - t - j} + P_{j} ( y + h_{j + 1} )}{( y + h_{j + 1} )^{2^{j} t} + Q_{j} ( y + h_{j + 1} )} - C_{j} \frac{y^{2^{j} t - t - j} + P_{j} ( y )}{y^{2^{j} t} + Q_{j} ( y )} = C_{j} \frac{A ( y )}{B ( y )} ,
\end{equation}
with
\begin{align*}
A ( y ) = \big( ( y + h_{j + 1} &)^{2^{j} t - t - j} + P_{j} ( y + h_{j + 1} ) \big) \big( y^{2^{j} t} + Q_{j} ( y ) \big) \\
&- \big( y^{2^{j} t - t - j} + P_{j} ( y ) \big) \big( ( y + h_{j + 1} )^{2^{j} t} + Q_{j} ( y + h_{j + 1} ) \big) ,
\end{align*}
and
\begin{equation*}
B ( y ) = \big( ( y + h_{j + 1} )^{2^{j} t} + Q_{j} ( y + h_{j + 1} ) \big) \big( y^{2^{j} t} + Q_{j} ( y ) \big) .
\end{equation*}
The Taylor formula implies
\begin{equation*}
( y + h_{j + 1} )^{\alpha} = y^{\alpha} ( 1 + h_{j + 1} / y )^{\alpha} = y^{\alpha} + \alpha h_{j + 1} y^{\alpha - 1} \modu S ( y^{\alpha - 2} ) ,
\end{equation*}
for all $\alpha \in \R$ and
\begin{align*}
P_{j} ( y + h_{j + 1} ) &= P_{j} ( y ) + \int_{0}^{h_{j + 1}} P_{j}^{\prime} ( y + s ) \, d s = P_{j} ( y ) \modu S ( y^{2^{j} t - t - j - 2} ) ,  \\
Q_{j} ( y + h_{j + 1} ) &= Q_{j} ( y ) \modu S ( y^{2^{j} t - 2} ) .
\end{align*}
Combining the previous estimates, $A ( y )$ becomes
\begin{align}
A ( y ) ={}& \big( y^{2^{j} t - t - j} + ( 2^{j} t - t - j ) h_{j + 1} y^{2^{j} t - t - j - 1}     \nonumber \\
&\qquad \qquad \qquad \qquad \quad + P_{j} ( y ) \modu S ( y^{2^{j} t - t - j - 2} ) \big) \big( y^{2^{j} t} + Q_{j} ( y ) \big)  \nonumber  \\
&- \big( y^{2^{j} t - t - j} + P_{j} ( y ) \big) \big( y^{2^{j} t} + 2^{j} t h_{j + 1} y^{2^{j} t - 1} + Q_{j} ( y ) \modu S ( y^{2^{j} t - 2} ) \big)    \nonumber \\
={}& - ( j + t ) h_{j + 1} \big( y^{2^{j + 1} t - t - j - 1} + P_{j + 1} ( y ) \big) ,  \label{a14}
\end{align}
for some $P_{j + 1} \in S ( y^{2^{j + 1} t - t - j - 2} )$. Similarly,
\begin{align}
B ( y ) &= \big( y^{2^{j} t} \modu S ( y^{2^{j} t - 1} ) \big) \big( y^{2^{j} t} \modu S ( y^{2^{j} t - 1} ) \big)    \nonumber \\
&= y^{2^{j + 1} t} + Q_{j + 1} ( y ) ,   \label{a15}
\end{align}
for some $Q_{j + 1} \in S ( y^{2^{j + 1} t - 1} )$. Eventually, \eqref{a13} together with \eqref{a14} and \eqref{a15} imply that the conclusions of the lemma hold true for $j + 1$ and then for all $j \in \Z_{+}$.
\end{proof}

\underline{Step 6:} the iterative Van der Corput argument. We will use a standard technique to prove the uniform distribution of sequences called the Van der Corput inequality. A version of this result is stated in the following lemma whose proof can be found in \cite[(3.2)]{kn} (see also \cite[Chapter 2]{rauzy}).

\begin{lem}\sl \label{a17}
Let $\{ b_{n} \}_{n \in \N}$ be a sequence of real numbers. Then, for all $1 \leq H \leq N$, we have
\begin{equation*}
\bigg\vert \frac{1}{N} \sum_{n = 1}^{N} e^{i b_{n}} \bigg\vert^{2} \leq \frac{2}{H} + \frac{4}{H} \sum_{h = 1}^{H - 1} \bigg\vert \frac{1}{N - h} \sum_{n = 1}^{N - h} e^{i ( b_{n + h} - b_{n} )} \bigg\vert .
\end{equation*}
\end{lem}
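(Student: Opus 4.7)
}
The plan is to run the classical ``doubling trick'' behind the Van der Corput inequality: first inflate $S_N := \sum_{n=1}^{N} e^{i b_n}$ by a factor of $H$ as a sum of $H$ shifted copies, then apply Cauchy--Schwarz to get a double sum of correlations, and finally bound diagonal versus off-diagonal contributions separately.

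Concretely, I would extend the sequence by setting $u_{n} := e^{i b_{n}}$ for $n \in \{1, \ldots, N\}$ and $u_{n} := 0$ otherwise, and define
\begin{equation*}
w_{n} := \sum_{h = 0}^{H - 1} u_{n + h} .
\end{equation*}
Since each $u_{m}$ with $m \in \{1,\ldots,N\}$ appears in exactly $H$ of the $w_{n}$, one has $H S_{N} = \sum_{n \in \Z} w_{n}$, and at most $N + H - 1$ indices $n$ produce a nonzero $w_{n}$. Cauchy--Schwarz then gives
\begin{equation*}
\vert H S_{N} \vert^{2} \leq ( N + H - 1 ) \sum_{n \in \Z} \vert w_{n} \vert^{2} .
\end{equation*}

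Expanding $\vert w_{n} \vert^{2} = \sum_{h , k = 0}^{H - 1} u_{n + h} \overline{u_{n + k}}$ and summing in $n$ first, one gets $\sum_{n} \vert w_{n} \vert^{2} = \sum_{h , k = 0}^{H - 1} D_{k - h}$, where $D_{\ell} := \sum_{n \in \Z} u_{n} \overline{u_{n + \ell}}$. The diagonal $\ell = 0$ contributes $D_{0} = N$ with multiplicity $H$. For $\ell \geq 1$, only $n \in \{1,\ldots,N-\ell\}$ contribute, so $D_{\ell} = \sum_{n = 1}^{N - \ell} e^{i ( b_{n} - b_{n + \ell} )} = \overline{T_{\ell}}$ with $T_{\ell} := \sum_{n = 1}^{N - \ell} e^{i ( b_{n + \ell} - b_{n} )}$; and $D_{- \ell} = \overline{D_{\ell}}$. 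Counting how often each $D_{\ell}$ occurs (namely $H - \vert \ell \vert$ times), one obtains
\begin{equation*}
\sum_{n \in \Z} \vert w_{n} \vert^{2} = H N + 2 \sum_{h = 1}^{H - 1} ( H - h ) \re D_{h} \leq H N + 2 \sum_{h = 1}^{H - 1} ( H - h ) \vert T_{h} \vert .
\end{equation*}

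Combining the two displays and dividing by $H^{2} N^{2}$ yields
\begin{equation*}
\frac{\vert S_{N} \vert^{2}}{N^{2}} \leq \frac{N + H - 1}{H N} + \frac{2 ( N + H - 1 )}{H^{2} N^{2}} \sum_{h = 1}^{H - 1} ( H - h ) \vert T_{h} \vert .
\end{equation*}
The hypothesis $H \leq N$ gives $N + H - 1 \leq 2 N$, whence the first term is at most $2 / H$. For the second term the elementary inequality $( N + H - 1 )( H - h )( N - h ) \leq 2 N^{2} H$ (which follows from $N + H - 1 \leq 2 N$, $H - h \leq H$, $N - h \leq N$) shows that each summand is bounded by $( 4 / H )\vert T_{h} \vert / ( N - h )$. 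Putting these pieces together gives the stated inequality. There is no real obstacle here; the only points where one must be careful are the bookkeeping of the shifted supports (to justify the factor $N + H - 1$ in Cauchy--Schwarz) and the final elementary inequality, which is just the numerical estimate above.
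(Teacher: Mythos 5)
The paper does not give a proof of Lemma \ref{a17}; it simply cites \cite[(3.2)]{kn}. Your self-contained argument is the classical Van der Corput doubling trick, and it is correct. Concretely: padding $u_n = e^{ib_n}$ by zeros on $\Z \setminus \{1,\ldots,N\}$ and forming the windows $w_n = \sum_{h=0}^{H-1}u_{n+h}$ gives $HS_N = \sum_n w_n$, supported on at most $N+H-1$ indices, so Cauchy--Schwarz yields $|HS_N|^2 \leq (N+H-1)\sum_n |w_n|^2$. The correlation expansion $\sum_n |w_n|^2 = \sum_{|\ell|\leq H-1}(H-|\ell|)D_\ell = HN + 2\sum_{h=1}^{H-1}(H-h)\re D_h$ with $D_0 = N$, $D_{-h}=\overline{D_h}$, and $|D_h| = |T_h|$ is exactly right, and the final numerical step reduces to $(N+H-1)(H-h)(N-h)\leq 2N^2H$, which indeed follows from $N+H-1\leq 2N$, $H-h\leq H$, $N-h\leq N$ under the hypothesis $H\leq N$ (and $N-h\geq 1$, so no division by zero). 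Since the inequality obtained after Cauchy--Schwarz and the correlation expansion, before specializing to unit-modulus terms, is precisely the version recorded in Kuipers--Niederreiter, your proof reproduces the argument of the cited source.
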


Mimicking the notations of \eqref{a16}, we recognize $b_{n}^{( 1 )} ( h ) = b_{n + h} - b_{n}$ in the right hand side of the last equation. Then, if we want to show that
\begin{equation*}
\bigg\vert \frac{1}{N} \sum_{n = 1}^{N} e^{i b_{n}} \bigg\vert \leq \varepsilon_{0} ,
\end{equation*}
for some $\varepsilon_{0} > 0$, it is enough to prove that
\begin{equation*}
\bigg\vert \frac{1}{N - h_{1}} \sum_{n = 1}^{N - h_{1}} e^{i b_{n}^{( 1 )} ( h_{1} )} \bigg\vert \leq \frac{\varepsilon_{0}^{2}}{8} = : \varepsilon_{1} ,
\end{equation*}
for all $1 \leq h_{1} < H_{1} : = \lceil 4 \varepsilon_{0}^{- 2} \rceil$. Iterating this argument, it is enough to prove that, for some $J \in \N$,
\begin{equation*}
\bigg\vert \frac{1}{N - h_{1} - \cdots - h_{J}} \sum_{n = 1}^{N - h_{1} - \cdots - h_{J}} e^{i b_{n}^{( J )} ( h_{1} , \ldots , h_{J} )} \bigg\vert \leq \varepsilon_{J} ,
\end{equation*}
for all $1 \leq h_{j} < H_{j}$ with $1\leq j \leq J$. Here, $\varepsilon_{J} > 0$ and $H_{j} \in \N$ only depend on $\varepsilon_{0}$ (and not on $b_{n}$ or $N$), but we assume that $N > H_{1} + \cdots + H_{J}$.

\underline{Step 7:} contribution coming from the interval $I_{k}$. Let us fix $k \in \{ 0 , \ldots , K - 1 \}$ and consider $n_{0} \in I_{k} \cap \N$. We define
\begin{equation} \label{a26}
d_{n} = a_{n_{0} + n}^{( k )} ( h_{1} , \ldots , h_{k} ) - a_{n_{0}}^{( k )} ( h_{1} , \ldots , h_{k} ) .
\end{equation}
From \eqref{a16}, this quantity is nothing more than $a_{n_{0}}^{( k + 1)} ( h_{1} , \ldots , h_{k} , n )$. Our next lemma contains a useful estimate of $d_n$:

\begin{lem}\sl \label{a19}
For $0 \leq n \ll n_{0}$, we have
\begin{equation*}
d_{n} = D M n r n_{0}^{- t - k - 1} \big( 1 + \CO ( n n_{0}^{- 1} ) \big) ,
\end{equation*}
with
\begin{equation*}
D = 2 ( - 1 )^{k + 1} ( k + t ) \prod_{\ell = 1}^{k} ( \ell + t - 1 ) \neq 0 \qquad \text{ and } \qquad M = \prod_{\ell = 1}^{k} h_{\ell} \in \N .
\end{equation*}
\end{lem}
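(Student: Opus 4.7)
The plan is to start from the observation that, by \eqref{a16} applied one more time, $d_n = a_{n_0}^{(k+1)}(h_1,\ldots,h_k,n)$ is simply the increment $a^{(k)}_{n_0+n}(h_1,\ldots,h_k) - a^{(k)}_{n_0}(h_1,\ldots,h_k)$ of the smooth function $y\mapsto a^{(k)}_y(h_1,\ldots,h_k)$. Rather than re-applying Lemma \ref{a12} with $h_{k+1}=n$ (which would force us to track how the symbols $P_{k+1}, Q_{k+1}$ depend on $n$), I would apply Taylor's formula to $a^{(k)}_y$ directly and then invoke Lemma \ref{a12} only with $j=k$, so that $P_k, Q_k$ depend on $h_1,\ldots,h_k$ alone.

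Concretely, from Lemma \ref{a12} with $j=k$, setting $C_k = 2r(-1)^k M \prod_{\ell=1}^k(\ell+t-1)$, one may write
\begin{equation*}
a^{(k)}_y = C_k \, y^{-(t+k)} \big(1+\rho(y)\big), \qquad \rho\in S(y^{-1}),
\end{equation*}
by factoring $y^{2^k t}$ out of the denominator and $y^{2^k t - t - k}$ out of the numerator in the representation of Lemma \ref{a12}, and using that $P_k/y^{2^k t-t-k}$ and $Q_k/y^{2^k t}$ both lie in $S(y^{-1})$. Differentiating termwise (using that $S(y^{-1})$ is closed under $y\partial_y$) gives
\begin{gather*}
\partial_y a^{(k)}_y = -(t+k)\, C_k\, y^{-(t+k+1)}\big(1 + O(y^{-1})\big), \\
\partial_y^{2} a^{(k)}_y = O\big(|C_k|\, y^{-(t+k+2)}\big),
\end{gather*}
uniformly for $y\geq 1$, with implicit constants depending on $h_1,\ldots,h_k$.

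Now Taylor's formula with integral remainder yields
\begin{equation*}
d_n = n\,\partial_y a^{(k)}_{y}\big|_{y=n_0} + \int_0^n (n-s)\,\partial_y^2 a^{(k)}_y\big|_{y=n_0+s}\,ds.
\end{equation*}
The main term, after substituting the expression for $\partial_y a^{(k)}_y$ and computing $-(t+k)C_k = DMr$, becomes $DMnr\,n_0^{-(t+k+1)}\big(1+O(n_0^{-1})\big)$. Assuming $0\leq n\ll n_0$, we have $n_0+s\asymp n_0$ for $s\in[0,n]$, so the remainder integral is bounded by $C\,n^2\,|C_k|\,n_0^{-(t+k+2)}$, which is $O(n\,n_0^{-1})$ times the main term $|DMr|\,n\,n_0^{-(t+k+1)}$. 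For $n\geq 1$ we also have $n_0^{-1} \leq n\,n_0^{-1}$, so the two error contributions can be combined into a single factor $(1+O(n\,n_0^{-1}))$, yielding the claimed asymptotic; for $n=0$ the identity is trivial. Finally, $D\neq 0$ because $t>0$ forces $k+t>0$ and $\ell+t-1>0$ for every $\ell\geq 1$.

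The only non-routine point is the justification that $\rho\in S(y^{-1})$ in the factorization of $a^{(k)}_y$, which is a direct symbolic manipulation from Lemma \ref{a12} but must be carried out carefully so that the constants in the symbol estimates for $\partial_y^2 a^{(k)}_y$ are genuinely independent of $n$; this independence is precisely what makes the Taylor approach preferable to re-invoking Lemma \ref{a12} with $h_{k+1}=n$.
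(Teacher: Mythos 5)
Your proof is correct and follows essentially the same route as the paper: both start from the representation of $a^{(k)}_y$ given by Lemma \ref{a12} with $j=k$ and Taylor-expand around $y=n_0$. The difference is one of packaging. The paper reuses the algebraic identity $d_n = C_k A(n)/B(n)$ from \eqref{a13} and expands the numerator and denominator separately, inserting the expansions $(n_0+n)^\alpha = n_0^\alpha + \alpha n n_0^{\alpha-1} + \CO(n^2 n_0^{\alpha-2})$ and $P_k(n_0+n) = P_k(n_0) + \CO(n n_0^{2^kt-t-k-2})$, $Q_k(n_0+n) = Q_k(n_0) + \CO(n n_0^{2^kt-2})$. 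You instead first factor $a^{(k)}_y = C_k y^{-(t+k)}(1+\rho(y))$ with $\rho\in S(y^{-1})$, differentiate twice, and apply Taylor's theorem with integral remainder to the full function; this is arguably a cleaner bookkeeping of the same computation and avoids repeating the Leibniz-type expansion of the quotient. One small remark: the factorization requires dividing by $1+Q_k(y)y^{-2^kt}$; this is legitimate for all $y\geq 1$ because $a^{(k)}_y$ is a finite difference of the smooth function $2ry^{-t}$ and is hence finite, so the denominator in Lemma \ref{a12} cannot vanish, and the resulting $\rho$ indeed lies in $S(y^{-1})$ with constants depending only on $h_1,\ldots,h_k$.
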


\begin{proof}
We follow the proof of Lemma \ref{a12} with $y = n_{0}$. Using \eqref{a13}, we can write
\begin{equation} \label{a20}
d_{n} = C_{k} \frac{A ( n )}{B ( n )} ,
\end{equation}
 $C_{k}$ being defined in \eqref{a18},
\begin{align*}
A ( n ) = \big( ( n_{0} + n &)^{2^{k} t - t - k} + P_{k} ( n_{0} + n ) \big) \big( n_{0}^{2^{k} t} + Q_{k} ( n_{0} ) \big) \\
&- \big( n_{0}^{2^{k} t - t - k} + P_{k} ( n_{0} ) \big) \big( ( n_{0} + n )^{2^{k} t} + Q_{k} ( n_{0} + n ) \big) ,
\end{align*}
and
\begin{equation*}
B ( n ) = \big( ( n_{0} + n )^{2^{k} t} + Q_{k} ( n_{0} + n ) \big) \big( n_{0}^{2^{k} t} + Q_{k} ( n_{0} ) \big) .
\end{equation*}
For $\alpha \in \R$, the Taylor formula gives
\begin{equation*}
( n_{0} + n )^{\alpha} = n_{0}^{\alpha} ( 1 + n / n_{0} )^{\alpha} = n_{0}^{\alpha} + \alpha n n_{0}^{\alpha - 1} + \CO ( n^{2} n_{0}^{\alpha - 2} ) ,
\end{equation*}
uniformly for $0 \leq n \leq n_{0} / 2$. Analogously, we have
\begin{equation*}
\left. \begin{aligned}
P_{k} ( n_{0} + n ) &= P_{k} ( n_{0} ) + \int_{0}^{n} P_{k}^{\prime} ( n_{0} + s ) \, d s = P_{k} ( n_{0} ) + \CO ( n n_{0}^{2^{k} t - t - k - 2} ) ,  \\
Q_{k} ( n_{0} + n ) &= Q_{k} ( n_{0} ) + \CO ( n n_{0}^{2^{k} t - 2} ) ,
\end{aligned}
\right.
\end{equation*}
uniformly for $0 \leq n \leq n_{0} / 2$. Summing up, we deduce
\begin{equation}
\begin{aligned}
A ( n ) &= - ( k + t ) n n_{0}^{2^{k + 1} t - t - k - 1} \big( 1 + \CO ( n n_{0}^{- 1} ) \big) ,  \\
B ( n ) &= n_{0}^{2^{k + 1} t} \big( 1 + \CO ( n n_{0}^{- 1} ) \big) .
\end{aligned}
\end{equation}
Finally, \eqref{a20} becomes
\begin{align*}
d_{n} &= - C_{k} ( k + t ) n n_{0}^{- t - k - 1} \frac{1 + \CO ( n n_{0}^{- 1} )}{1 + \CO ( n n_{0}^{- 1} )} \\
&= - C_{k} ( k + t ) n n_{0}^{- t - k - 1} \big( 1 + \CO ( n n_{0}^{- 1} ) \big) ,
\end{align*}
for $n \ll n_{0}$.
\end{proof}

Here, it is important to note that \eqref{a23} yields
\begin{equation} \label{a24}
0 < r n_{0}^{- t - k - 1} \leq r^{- \delta ( t + k + 1)} \ll 1 ,
\end{equation}
for $n_{0} \in I_{k}$. Thus, the sequence $\{ d_{n} \}_{n}$ is slowly increasing for $n \ll n_{0}$. Let us define
\begin{equation} \label{a22}
N ( n_{0} ) : = \Big\lceil \frac{2 \pi}{D} r^{- 1} n_{0}^{t + k + 1} \Big\rceil ,
\end{equation}
which is roughly speaking the primitive period of $d_{n} / M$ modulo $2 \pi$. In other words, $N ( n_{0} )$ is such that $d_{N ( n_{0} )} \approx 2 \pi M \in 2 \pi \Z$. From \eqref{a23}, this period satisfies
\begin{equation} \label{a21}
1 \ll \frac{2 \pi}{D} r^{\delta ( t + k + 1)} \leq N ( n_{0} ) \leq \frac{2 \pi}{D} r^{- \delta ( t + k )} n_{0} + 1 \ll n_{0} ,
\end{equation}
for $r$ large enough. Summing $e^{i d_{n}}$ over a period leads to

\begin{lem}\sl \label{a27}
Let $h_{1} , \ldots , h_{k} \in \N$ be fixed. For $r$ large enough, we have
\begin{equation*}
\bigg\vert \sum_{n = 0}^{N ( n_{0} ) - 1} e^{i d_{n}} \bigg\vert \lesssim N ( n_{0} ) r^{- \delta ( t + k )} + 1 ,
\end{equation*}
uniformly for $n_{0} \in I_{k}$.
\end{lem}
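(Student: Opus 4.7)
The plan is to approximate $d_n$ by a linear function of $n$ and then exploit elementary geometric cancellation. Write $\alpha := DM\,r\,n_0^{-t-k-1}$, so that Lemma \ref{a19} yields $d_n = \alpha n + E_n$ with $E_n = \CO(\alpha n^{2}/n_0)$, uniformly for $0 \leq n \leq N(n_0)-1$; this expansion is valid because \eqref{a21} forces $N(n_0)/n_0 \ll 1$, so in particular $n/n_0 \ll 1$ in the range of summation. The splitting $e^{id_n} = e^{i\alpha n} + e^{i\alpha n}\bigl(e^{iE_n}-1\bigr)$ then reduces everything to bounding a geometric sum and an error.

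I would first pin down the size of $E_n$. From \eqref{a21}, $N(n_0) \leq \tfrac{2\pi}{D} r^{-\delta(t+k)} n_0 + 1$, so $n/n_0 \lesssim r^{-\delta(t+k)}$ throughout. Combined with $\alpha N(n_0) = 2\pi M + \CO(\alpha)$, which follows directly from $N(n_0) = \lceil \tfrac{2\pi}{D} r^{-1} n_0^{t+k+1}\rceil$ and the definition of $\alpha$, this gives $\alpha n \leq 2\pi M + \CO(1)$ and hence $|E_n| \lesssim r^{-\delta(t+k)}$, with constants depending only on the fixed data $h_1,\dots,h_k$ (hence on $M$). Therefore $\bigl|e^{iE_n}-1\bigr| \lesssim r^{-\delta(t+k)}$, and the remainder sum contributes at most
\begin{equation*}
\Big| \sum_{n=0}^{N(n_0)-1} e^{i\alpha n}\bigl(e^{iE_n}-1\bigr) \Big| \lesssim N(n_0)\, r^{-\delta(t+k)}.
\end{equation*}

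For the main geometric piece, I would use
\begin{equation*}
\sum_{n=0}^{N(n_0)-1} e^{i\alpha n} = \frac{1 - e^{i\alpha N(n_0)}}{1 - e^{i\alpha}} .
\end{equation*}
By construction $\alpha N(n_0) \in 2\pi M + [0,\alpha)$, so $|1 - e^{i\alpha N(n_0)}| \leq |\alpha|$; on the other hand $\alpha = \CO(r^{-\delta(t+k+1)}) \to 0$ by \eqref{a24}, so $|1 - e^{i\alpha}| \geq (2/\pi)|\alpha|$ for $r$ large enough. Hence the geometric term is $\CO(1)$, uniformly in $n_0 \in I_k$. Adding the two contributions gives the announced bound. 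The only real obstacle is the bookkeeping that ensures the prefactor $M = \prod h_\ell$, together with the constants from Lemma \ref{a19}, is absorbed into the implicit constant in $\lesssim$ while the saving $r^{-\delta(t+k)}$ is kept explicit; once the linear-plus-error decomposition and the two uniform estimates ($\alpha n \lesssim 1$ and $n/n_0 \lesssim r^{-\delta(t+k)}$) are in hand, the rest is routine.
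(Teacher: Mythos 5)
Your proposal is correct and follows essentially the same route as the paper's proof: split $e^{id_n}$ into the linear phase $e^{i\alpha n}$ plus a remainder controlled by $|d_n - \alpha n|$, bound the geometric sum by $\CO(1)$ using the near-periodicity $\alpha N(n_0) \in 2\pi M + [0,\alpha)$ together with $|1-e^{i\alpha}| \gtrsim |\alpha|$, and bound the remainder sum by $N(n_0)\,r^{-\delta(t+k)}$. The only cosmetic difference is that the paper sums the error term to $\CO(N(n_0)^3 r\, n_0^{-t-k-2})$ and then factors, whereas you estimate each $|E_n|$ uniformly and multiply by the number of terms; the two bookkeepings give the identical estimate.
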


\begin{proof}
Lemma \ref{a19} implies
\begin{align}
\sum_{n = 0}^{N ( n_{0} ) - 1} e^{i d_{n}} &= \sum_{n = 0}^{N ( n_{0} ) - 1} \big( e^{i D M n r n_{0}^{- t - k - 1}} + \CO ( n^{2} r n_{0}^{- t - k - 2} ) \big)    \nonumber \\
&= \frac{1 - e^{i D M N ( n_{0} ) r n_{0}^{- t - k - 1}}}{1 - e^{i D M r n_{0}^{- t - k - 1}}} + \CO \big( N ( n_{0} )^{3} r n_{0}^{- t - k - 2} \big) .  \label{a25}
\end{align}
Since $D M N ( n_{0} ) r n_{0}^{- t - k - 1}$ is at distance less than $D M r n_{0}^{- t - k - 1}$ from $2 \pi \Z$, \eqref{a24} yields
\begin{equation*}
\bigg\vert \frac{1 - e^{i D M N ( n_{0} ) r n_{0}^{- t - k - 1}}}{1 - e^{i D M r n_{0}^{- t - k - 1}}} \bigg\vert \lesssim \frac{D M r n_{0}^{- t - k - 1}}{D M r n_{0}^{- t - k - 1}} = 1 .
\end{equation*}
On the other hand, \eqref{a22} and \eqref{a21} give
\begin{align*}
N ( n_{0} )^{3} r n_{0}^{- t - k - 2} &= N ( n_{0} ) \big( N ( n_{0} ) r n_{0}^{- t - k - 1} \big) \big( N ( n_{0} ) n_{0}^{- 1} \big)   \\
&\lesssim N ( n_{0} ) r^{- \delta ( t + k )} .
\end{align*}
Now the lemma follows from \eqref{a25} and the last two inequalities.
\end{proof}

Let $H$ be a fixed integer. Combining \eqref{a26}, \eqref{a21} and Lemma \ref{a27}, we get
\begin{align*}
\bigg\vert \frac{1}{N ( n_{0} ) - H} \sum_{n = 0}^{N ( n_{0} ) - H - 1} e^{i a_{n_{0} + n}^{( k )} ( h_{1} , \ldots , h_{k} )} \bigg\vert &\leq \frac{N ( n_{0} )}{N ( n_{0} ) - H} \bigg\vert \frac{1}{N ( n_{0} )} \sum_{n = 0}^{N ( n_{0} ) - 1} e^{i d_{n}} \bigg\vert + \frac{H}{N ( n_{0} ) - H}  \\
&\lesssim \bigg\vert \frac{1}{N ( n_{0} )} \sum_{n = 0}^{N ( n_{0} ) - 1} e^{i d_{n}} \bigg\vert + N ( n_{0} )^{- 1} \\
&\lesssim r^{- \delta ( t + k )} + r^{- \delta ( t + k + 1)}   \\
&\lesssim r^{- \delta ( t + k )} ,
\end{align*}
for $r$ large enough, where $r^{- \delta ( t + k )}$ tends to $0$ as $r \to \infty$. Then, the iterative Van der Corput argument below Lemma \ref{a17} with $J = k$ implies that, for $r$ large enough, we have
\begin{equation} \label{a28}
\bigg\vert \frac{1}{N ( n_{0} )} \sum_{n = n_{0}}^{n_{0} + N ( n_{0} ) - 1} e^{i a^{( 0 )}_{n}} \bigg\vert \leq \frac{\varepsilon t}{4 K} , \quad \forall n_{0} \in I_{k}.
\end{equation}

We now estimate the sum of $v_{n}$ defined in \eqref{a35}. For $n \in [ n_{0} , n_{0} + N ( n_{0} ) - 1]$, \eqref{a21} yields
\begin{equation} \label{a29}
\Big\vert \frac{1}{n} - \frac{1}{n_{0}} \Big\vert = \frac{n - n_{0}}{n n_{0}} \leq \frac{N ( n_{0} )}{n n_{0}} \leq C \frac{r^{- \delta ( t + k )}}{n} ,
\end{equation}
for some $C > 0$. Applying \eqref{a28} and two times \eqref{a29}, we deduce
\begin{align}
\bigg\vert \sum_{n = n_{0}}^{n_{0} + N ( n_{0} ) - 1} v_{n} \bigg\vert &= \bigg\vert \sum_{n = n_{0}}^{n_{0} + N ( n_{0} ) - 1} \frac{1}{n} e^{i a_{n}^{( 0 )}} \bigg\vert   \nonumber \\
&\leq \bigg\vert \sum_{n = n_{0}}^{n_{0} + N ( n_{0} ) - 1} \frac{1}{n_{0}} e^{i a_{n}^{( 0 )}} \bigg\vert + C r^{- \delta ( t + k )} \sum_{n = n_{0}}^{n_{0} + N ( n_{0} ) - 1} \frac{1}{n} \nonumber  \\
&\leq \frac{\varepsilon t}{4 K} \frac{N ( n_{0} )}{n_{0}} + C r^{- \delta ( t + k )} \sum_{n = n_{0}}^{n_{0} + N ( n_{0} ) - 1} \frac{1}{n} \nonumber \\
&= \frac{\varepsilon t}{4 K} \sum_{n = n_{0}}^{n_{0} + N ( n_{0} ) - 1} \frac{1}{n_{0}} + C r^{- \delta ( t + k )} \sum_{n = n_{0}}^{n_{0} + N ( n_{0} ) - 1} \frac{1}{n} \nonumber \\
&\leq \frac{\varepsilon t}{4 K} \sum_{n = n_{0}}^{n_{0} + N ( n_{0} ) - 1} \frac{1}{n} + 2 C r^{- \delta ( t + k )} \sum_{n = n_{0}}^{n_{0} + N ( n_{0} ) - 1} \frac{1}{n} ,
\end{align}
for all $n_{0} \in I_{k}$. Thus, we just proved

\begin{lem}\sl \label{a30}
For $r$ large enough, we have
\begin{equation*}
\bigg\vert \sum_{n = n_{0}}^{n_{0} + N ( n_{0} ) - 1} v_{n} \bigg\vert \leq \frac{\varepsilon t}{2 K} \sum_{n = n_{0}}^{n_{0} + N ( n_{0} ) - 1} \frac{1}{n} ,
\end{equation*}
uniformly for $n_{0} \in I_{k}$.
\end{lem}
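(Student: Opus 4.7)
The plan is short because the substantive work has been done: the chain of inequalities displayed immediately before the statement establishes, uniformly in $n_0 \in I_k$,
$$\bigg\vert \sum_{n = n_{0}}^{n_{0} + N ( n_{0} ) - 1} v_{n} \bigg\vert \leq \bigg( \frac{\varepsilon t}{4K} + 2C r^{-\delta(t+k)} \bigg) \sum_{n = n_{0}}^{n_{0} + N ( n_{0} ) - 1} \frac{1}{n}.$$
So my proof plan is simply to absorb the correction term $2Cr^{-\delta(t+k)}$ into the principal term $\varepsilon t / (4K)$ by choosing $r$ sufficiently large.

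To do this, I would first observe that the exponent $\delta(t+k)$ is strictly positive: $\delta > 0$ by \eqref{a9} and $t+k \geq t > 0$. Consequently $r^{-\delta(t+k)} \to 0$ as $r \to \infty$. The constant $C$ in \eqref{a29} is geometric and independent of both $n_0$ and $k$, and $k$ varies over the finite set $\{0, \ldots, K-1\}$, so a single threshold $r_\star = r_\star(\varepsilon, t, K)$ can be chosen with the property that $2C r^{-\delta(t+k)} \leq \varepsilon t / (4K)$ for every $r \geq r_\star$ and every admissible $k$. Adding the two contributions then produces $\varepsilon t / (2K)$, which is the bound claimed; the estimate inherits uniformity in $n_0 \in I_k$ because every ingredient in the derivation is uniform in $n_0$.

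I do not anticipate any genuine obstacle: the heavy analytic work — the computation of the discrete derivatives of the phase in Lemma \ref{a19}, the period-sum estimate in Lemma \ref{a27}, the iterated Van der Corput scheme leading to \eqref{a28}, and the comparison of $1/n$ with $1/n_0$ on a block of length $N(n_0)$ via \eqref{a29} — is already in place. Lemma \ref{a30} is a clean repackaging of these estimates, and the remaining step is an elementary smallness argument on the correction term.
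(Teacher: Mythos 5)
Your proposal is correct and coincides with the paper's own (implicit) argument: the displayed chain of inequalities preceding the lemma already gives the bound $\frac{\varepsilon t}{4K} + 2Cr^{-\delta(t+k)}$ times the harmonic block sum, and the paper's ``Thus, we just proved'' tacitly absorbs the vanishing correction term exactly as you describe, using $\delta(t+k) > 0$ and the finiteness of the range $k \in \{0,\ldots,K-1\}$.
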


For $r$ large enough, we decompose $I_{k}$ in a disjoint union of intervals
\begin{equation*}
I_{k} = \bigg( \bigcup_{\ell = 1}^{L} \CI_{\ell} \bigg) \cup \CI_{\infty} ,
\end{equation*}
such that $\CI_{\ell} \cap \N$ is of the form $\{ n_{\ell} , \ldots , n_{\ell} + N ( n_{\ell} ) \} \subset I_{k}$ for all $\ell = 1 , \ldots , L$ and $\CI_{\infty}$ is too small to contain such a period. In particular, \eqref{a21} implies
\begin{equation} \label{a31}
\Big\vert \sum_{n \in \CI_{\infty}} v_{n} \Big\vert \lesssim r^{- \delta ( t + k )} \leq 1 ,
\end{equation}
for $r$ large enough. Using Lemma \ref{a30} to estimate the sum over $\CI_{\ell}$ for $\ell = 1 , \ldots , L$ and \eqref{a31} to estimate the sum over $\CI_{\infty}$, we get
\begin{equation} \label{a32}
\Big\vert\sum_{n \in I_{k}} v_{n} \Big\vert \leq \frac{\varepsilon t}{2 K} \sum_{n \in I_{k}} \frac{1}{n} + 1 \leq \frac{\varepsilon t}{2 K} \sum_{n \leq r^{\frac{1}{t}}} \frac{1}{n} \leq \frac{\varepsilon}{K} \ln r ,
\end{equation}
for $r$ large enough.

\underline{Step 8:} conclusion. Combining \eqref{a10} for the large values of $n$, \eqref{a11} for the main contribution, \eqref{a34} for the small values of $n$, \eqref{a33} for the contribution of $J_{k}$ and \eqref{a32} for the contribution of $I_{k}$, we obtain
\begin{align*}
\Big\vert g_{t} ( r ) - \frac{\ln r}{2 t} \Big\vert &\leq \bigg\vert \sum_{n < r^{\frac{1}{t} + \delta}} u_{n} - \frac{\ln r}{2 t} \bigg\vert + \varepsilon \ln r   \\
&\leq \bigg\vert \sum_{n < r^{\frac{1}{t} + \delta}} \frac{1}{2 n} - \frac{\ln r}{2 t} \bigg\vert + \bigg\vert \sum_{n < r^{\frac{1}{t} + \delta}} v_{n} \bigg\vert + \varepsilon \ln r  \\
&\leq \bigg\vert \sum_{n < r^{\frac{1}{t + K} - \delta}} v_{n} \bigg\vert + \sum_{k = 0}^{K - 1} \bigg\vert \sum_{n \in I_{k}} v_{n} \bigg\vert + \sum_{k = 0}^{K} \bigg\vert \sum_{n \in J_{k}} v_{n} \bigg\vert + 2 \varepsilon \ln r  \\
&\leq \varepsilon \ln r + \sum_{k = 0}^{K - 1} \frac{\varepsilon}{K} \ln r + \sum_{k = 0}^{K} \frac{\varepsilon}{K + 1} \ln r + 2 \varepsilon \ln r  \\
&\leq 5 \varepsilon \ln r ,
\end{align*}
for $r$ large enough. This ends the proof of Proposition \ref{pr2}.

\subsection*{Acknowledgements}
An essential part of this   work  was done during the visit of the third author to the University of Bordeaux, France, in January 2017, and to the Institute of Mathematics, Bulgarian Academy of Sciences, in December 2017. He thanks these institutions for hospitality and financial support. He also gratefully acknowledges the partial support of the Chilean Scientific Foundation {\em Fondecyt} under Grant 1170816. The authors thank Grigori Rozenblum for a discussion on the spectral
gaps adjoining the origin.

\end{document}